\renewcommand*{\@fnsymbol}[1]{\@alph{#1}}
\title{C*-algebras of higher-rank graphs from groups acting on buildings, and explicit computation of their K-theory}
\author{
	Sam A. Mutter,
	Aura-Cristiana Radu, 
	Alina Vdovina
}
\date{\today}
\theoremstyle{plain}
\newtheorem{thm}{Theorem}[section]
\newtheorem{prop}[thm]{Proposition}
\newtheorem{conj}[thm]{Conjecture}
\newtheorem{lem}[thm]{Lemma}
\newtheorem{corl}[thm]{Corollary}
\theoremstyle{definition}
\newtheorem{defn}[thm]{Definition}
\newtheorem{ex}[thm]{Example}
\newtheorem{rem}[thm]{Remark}
\newcommand{\inv}{^{-1}}
\DeclareMathOperator{\coker}{coker}
\DeclareMathOperator{\rk}{rk}
\DeclareMathOperator{\Ob}{Ob}
\DeclareMathOperator{\Hom}{Hom}
\newcommand{\cst}{C^{\star}}
\DeclareMathOperator{\im}{im}
\DeclareMathOperator{\cat}{CAT}
\DeclareMathOperator{\lk}{lk}
\DeclareMathOperator{\Aut}{Aut}
\DeclareMathOperator{\id}{id}
\newcommand{\del}{\partial}
\newcommand{\bmu}{\boldsymbol\mu}
\newcommand{\bnu}{\boldsymbol\nu}
\begin{document}

\maketitle

\begin{abstract}
	We unite elements of category theory, K-theory, and geometric group theory, by defining a class of groups called $k$-cube groups, which act freely and transitively on the product of $k$ trees, for arbitrary $k$. The quotient of this action on the product of trees defines a $k$-dimensional cube complex, which induces a higher-rank graph. We make deductions about the K-theory of the corresponding rank-$k$ graph $\cst$-algebras, and give examples of $k$-cube groups and their K-theory. These are among the first explicit computations of K-theory for an infinite family of rank-$k$ graphs for $k\geq 3$, which is not a direct consequence of the K\"{u}nneth Theorem for tensor products.
\end{abstract}

%%%%%%%%%%%%%%%%%%%%%%%%%%%%%%%%%%%%%%%%%%%%%%%%%%%
\section{Introduction}
%%%%%%%%%%%%%%%%%%%%%%%%%%%%%%%%%%%%%%%%%%%%%%%%%%%

A \textit{rank-$k$ graph} is a combinatorial object defined by Kumjian and Pask in \cite{KumPas2000}, with motivation from Robertson and Steger \cite{RobSte1999}. To each rank-$k$ graph can be assigned a \textit{rank-$k$ graph algebra}, being the universal $\cst$-algebra generated by a set of partial isometries, and deductions about the K-theory of these algebras can be made courtesy of the \textit{spectral sequences} exhibited by Evans in \cite{Eva2008}. In general, it is difficult to calculate K-theory for higher-rank graphs. In this article we construct an infinite family of rank-$k$ graphs, for arbitrary $k$, and remark on the K-theory when $k=3,4,5$. 

Robertson and Steger laid the foundations for these $k$-dimensional generalisations of \textit{graph algebras}, which are $\cst$-algebras built with data obtained from $k$-many directed graphs on the same vertex set. The K-theory of these algebras when $k=2$ has been detailed in \cite{RobSte2001}, \cite{KimRob2002}, and until now, only a small selection of examples had been investigated in this way. Here, we further develop Robertson and Steger's methods, using Kumjian and Pask's category-theoretical language, to present an infinite family of examples arising from groups which act on the product of $k$-many regular trees. We outline the process now.

A \textit{VH-structure} on a square complex $\mathcal{M}$ is a partition of its directed edge set $E(\mathcal{M}) = E_V \sqcup E_H$ such that the link at each vertex $x$ of $\mathcal{M}$ is the complete bipartite graph induced by the partition. Such objects were pioneered by \cite{Wis1996} and further studied in \cite{BurMoz1997}, \cite{BurMoz2000}, \cite{KimRob2002}. It was shown, amongst other places, in \cite{BalBri1995}, that the universal cover of a square complex is a product of two trees if and only if the link at each vertex is a complete bipartite graph. Let $T_1$, $T_2$ be regular trees of constant valencies $m$, $n$, respectively, and let $\Gamma$ be a \textit{lattice} in $T_1 \times T_2$, that is, a group which acts discretely and cocompactly on $\Aut(T_1) \times \Aut(T_2)$, respecting the structure of cube complexes and such that $\Gamma \setminus (T_1 \times T_2)$ is a finite square complex. From \cite{BurMoz2000} we learn that $\Gamma$ corresponds uniquely to a square complex with VH-structure of partition size $(m,n)$, up to isomorphism; indeed, $\Gamma \setminus (T_1 \times T_2)$ is such a complex.

Let $\Gamma$ be a group, and let $E_1, E_2 \subseteq \Gamma$ be finite subsets closed under inverses. We say that the pair $(E_1,E_2)$ is a \textit{VH-structure} on $\Gamma$ if $E_1 \cup E_2$ generates $\Gamma$, and the product sets $E_1E_2$ and $E_2E_1$ are equal with size $|E_1| \cdot |E_2|$, and without $2$-torsion. We may define a \textit{BM-group} (named for Burger and Mozes, and developed by them and Wise) as a group which admits a VH-structure, as in \cite{BurMoz1997}, \cite{Wis1996}. A BM-group acts freely and transitively on a product of two trees, and yields a square complex with one vertex and a VH-structure in a natural way \cite{BurMoz2000}. We may therefore freely interchange geometric and algebraic terminology.

Inspired by this, Vdovina in \cite{VdoPrep} defined a $k$-dimensional generalisation: a partition into $k$ subsets $E_1 \sqcup \cdots \sqcup E_k$ of the directed edge set of a $k$-dimensional cube complex $\mathcal{M}$ which uniquely corresponds to a lattice in the product of $k$ trees of constant valencies $|E_1|, \ldots , |E_k|$. We call this an \textit{adjacency structure} for $\mathcal{M}$. Likewise, we define a $k$\textit{-cube group} to be a group $\Gamma$ which admits $k$ subsets $E_1, \ldots , E_k$ which satisfy a compatibility condition, whose union generates $\Gamma$ and which, taken pairwise, have the same properties as above. A $k$-cube group acts freely and transitively on a product of $k$ trees---this is a rank-$k$ affine building $\Delta$ which is thick whenever $|E_i| > 2$ for all $i$. The quotient of this action is a $k$-dimensional cube complex with one vertex and endowed with a $k$-adjacency structure. We identify the $k$-dimensional cells (chambers) of $\Delta$ with the tuple of elements of $\Gamma$ which label their edges; in this way we define the notion of a $k$\textit{-cube in the group} $\Gamma$.  

In Section \ref{S:k-cubes}, we construct \textit{adjacency functions} on the sets of $k$-cubes of $\Gamma$, defining two $k$-cubes to be adjacent if the cells they define in $\Delta$ are adjacent in a certain way. This generalises the $2$-dimensional shift system explored in \cite{KimRob2002}, where two squares were considered adjacent if they could be stacked against one another horizontally or vertically. In $k$ dimensions, there are $k$ `directions' in which $k$-cubes can be stacked, and hence we define $k$-many adjacency functions. We show in Proposition \ref{prop:ufp} that the adjacency functions satisfy a Unique Common Extension Property in the following sense: firstly, consider a $2 \times 2 \times 2$ arrangement of $k$-cubes. Then, given an initial $k$-cube and three $k$-cubes adjacent to it in three mutually-orthogonal directions, we can uniquely find four more $k$-cubes which fill in the $2 \times 2 \times 2$ structure (Figure \ref{fig:ufp}). In \cite{RobSte1999}, Roberston and Steger show that this $3$-dimensional commutativity of $k$-cubes is enough to imply unique common extensions in all dimensions up to $k$. As such, we are able to conclude in Section \ref{S:higher-rank} that a $k$-cube group $\Gamma$ induces a rank-$k$ graph $\mathcal{G}(\Gamma)$.

In Theorem \ref{thm:determined_by_K}, we show that the K-theory of our rank-$k$ graph $\cst$-algebras determines the $\cst$-algebras uniquely, up to isomorphism.

We use a technique of \cite{RunStiVdo2019} to build examples of $k$-cube groups, and uncover enough about their K-theory to be able to distinguish their induced rank-$k$ graph $\cst$-algebras. We note that a $k$-cube group $\Gamma$ is an amalgamated product of $(k-1)$-cube groups, and the induced K-theory has no immediately-discernible relation to that of the $(k-1)$-cube groups which $\Gamma$ contains. Until now, there are very few explicit constructions for infinite families of higher-rank graph algebras for which the K-theory is (even partially) known (see, for example, \cite{KPS2008}).

In the final section, we use Evans' K-theory formulas and our own corollaries to study some rank-$k$ graphs arising from cube complexes in a second way, namely as double covers of the cube complexes constructed in Section \ref{S:k-cubes}. This work follows from that of \cite{LawSimVdoPrep}.

%%%%%%%%%%%%%%%%%%%%%%%%%%%%%%%%%%%%%%%%%%%%%%%%%%%
\section{$k$-cube groups}\label{S:k-cubes}
%%%%%%%%%%%%%%%%%%%%%%%%%%%%%%%%%%%%%%%%%%%%%%%%%%%

For some finite $n \geq 2$, define $T(n)$ to be the regular tree of degree $n$. We may simply write $T$ if the degree is not important. 

Let $T_1, \ldots , T_k$ be regular trees, and consider the product $T_1 \times \cdots \times T_k$. This defines a $k$-dimensional cube complex $\Delta$, which is an affine building of rank $k$.

Recall that the \textbf{link} at a vertex $x$ of a $k$-dimensional cell complex $G$ is the $(k-1)$-dimensional cell complex $\lk_x(G)$ obtained as the intersection of $G$ with a small $2$-sphere centred at $x$.

\begin{prop}\label{prop:clique}
	Let $\mathcal{M}$ be a $k$-dimensional cube complex. The universal cover of $\mathcal{M}$ is a product of $k$ trees $\tilde{\mathcal{M}} = T_1 \times \cdots \times T_k$ if and only if the link at each vertex of $\mathcal{M}$ is a clique complex of a complete $k$-partite graph.
\end{prop}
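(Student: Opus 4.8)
The plan is to prove both directions by analysing the structure of the link and invoking the characterisation of products of trees. Recall the classical fact (due to Ballmann–Brin, cited in the introduction for $k=2$) that a CAT(0) cube complex is a product of trees precisely when it has no "turns," i.e. when it is simply connected and each vertex link has the combinatorial properties forcing a product structure; the content here is identifying those properties with the clique complex of a complete $k$-partite graph.

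First I would set up notation: fix a vertex $x$ of $\mathcal M$ and note that the edges of $\mathcal M$ incident to $x$ fall into $k$ classes according to which of the $k$ coordinate directions they will eventually correspond to. The plan is to show that, for a product of $k$ trees, the link $\lk_x(\widetilde{\mathcal M})$ is naturally the join $L_1 \ast \cdots \ast L_k$, where each $L_i$ is the (discrete) link of a vertex in the tree $T_i$ — a finite set of $|E_i|$ points with no edges. A join of $k$ discrete vertex sets of sizes $n_1,\dots,n_k$ is exactly the clique complex of the complete $k$-partite graph $K_{n_1,\dots,n_k}$: a collection of vertices spans a simplex iff it contains at most one vertex from each part. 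Since links are local, $\lk_x(\mathcal M) = \lk_{\tilde x}(\widetilde{\mathcal M})$ for any lift $\tilde x$, so this establishes the forward direction.

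For the converse, I would assume every vertex link of $\mathcal M$ is the clique complex of a complete $k$-partite graph and deduce that $\widetilde{\mathcal M}$ splits as a product of $k$ trees. The strategy: first observe $\widetilde{\mathcal M}$ is CAT(0) — by Gromov's link condition, a cube complex is CAT(0) iff it is simply connected and all vertex links are flag simplicial complexes, and clique complexes of graphs are automatically flag, so simple connectivity of the universal cover closes this. Then use the partition of edges into $k$ colour classes: the $k$-partite structure of every link means that at each vertex the edges are consistently partitioned, so the colouring is globally well-defined and each "square" of $\widetilde{\mathcal M}$ has its two pairs of opposite edges carrying two distinct colours. One then checks that for each colour $i$ the subcomplex spanned by monochromatic $i$-coloured edges, together with the hyperplane/convexity arguments standard for CAT(0) cube complexes, yields a tree $T_i$, and that the natural map $\widetilde{\mathcal M} \to T_1 \times \cdots \times T_k$ is a cubical isomorphism — injectivity and surjectivity following from the unique-geodesic property and the fact that no vertex link contains an edge within a colour class (so there is no "diagonal" motion mixing a single coordinate).

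The main obstacle I expect is the converse direction, specifically promoting the local colouring of edges to a genuine product decomposition: showing that the colour-$i$ "slices" are trees (connected, acyclic) and that $\widetilde{\mathcal M}$ is honestly their product rather than merely mapping to it. This is where one must use that $\lk_x$ is not just the flag complex of \emph{some} graph but of a \emph{complete} $k$-partite graph — the completeness forces every combinatorially-consistent square to actually be filled, which is what makes the product surjective, and the $k$-partiteness (no edges inside a part) forces each coordinate tree to branch without creating loops. I would handle this by the hyperplane machinery of CAT(0) cube complexes: the hyperplanes come in $k$ classes by colour, hyperplanes of the same colour are pairwise disjoint and each separates $\widetilde{\mathcal M}$ into two halfspaces, and the $k$ resulting wall-spaces are mutually "transverse," which by a theorem of Caprace–Sageev (or directly) gives the product splitting. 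A cleaner alternative, given that the introduction already invokes the $k=2$ result of Ballmann–Brin, is to induct on $k$: peel off one colour class to write $\widetilde{\mathcal M} \cong \widetilde{\mathcal M}' \times T_k$ with $\widetilde{\mathcal M}'$ a $(k-1)$-dimensional cube complex whose links are clique complexes of complete $(k-1)$-partite graphs, and apply the inductive hypothesis — I would flag this inductive route as the one to write up in full, with the base case $k=2$ being the cited Ballmann–Brin theorem (or $k=1$, trivially, a graph whose links are "complete $1$-partite," i.e. edgeless with no constraint, being a tree exactly when simply connected).
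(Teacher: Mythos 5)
Your proposal is correct and follows essentially the same route as the paper: both use the Gromov link condition (clique complexes are flag, hence the links are $\cat(1)$ and $\tilde{\mathcal{M}}$ is $\cat(0)$) and then obtain the product-of-trees splitting from the complete $k$-partite structure of the links, with the $k=2$ case of Ballmann--Brin/Bridson--Wise as the base. The paper simply delegates the splitting step to ``a relatively straightforward adaptation of Theorem 4.3 in Bridson--Wise,'' whereas you sketch how that adaptation would go (joins for the forward direction, hyperplanes or induction on $k$ for the converse), which is consistent with, and somewhat more explicit than, the published argument.
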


\begin{proof}
	This proposition is a generalisation of Theorem 10.2 in \cite{BriWis1999}. Observe that if the link $\lk_x(\mathcal{M})$ at a vertex $x$ of $\mathcal{M}$ is such a clique complex, then $\lk_x(\mathcal{M})$ is a $(k-1)$-dimensional complex such that every cycle has length at least $k$. Hence $\lk_x(\mathcal{M})$ is $\cat(1)$, and so by the \textit{Gromov Link Condition} \cite[\S 4.2]{Gro1988}, $\mathcal{M}$ must be $\cat(0)$. The result then follows from a relatively straightforward adaptation to Theorem 4.3 in \cite{BriWis1999}. 
\end{proof}

The following definitions generalise objects from \cite{KimRob2002} and \cite{Wis1996}.

\begin{defn}
	Let $\mathcal{M}$ be a $k$-dimensional cube complex with vertex set $V$ and edge set $E$. Assign to each edge $u \in E$ an orientation (these are arbitrary but will remain fixed once chosen) and write $u\inv$ to denote the oriented edge with opposite orientation to $u$. For each $x \in V$, write $E(x)$ for the set of oriented edges originating at $x$. Suppose that we have a partition $E = E_1 \sqcup \cdots \sqcup E_k$ such that $u\inv \in E_i$ whenever $u \in E_i$, and suppose that for each vertex $x \in V$, the $1$-skeleton of the link at $x$ is the complete $k$-partite graph with vertices according to the partition $E(x) = E(x)_1 \sqcup \cdots \sqcup E(x)_k$. We say that $E_1,\ldots , E_k$ form an \textbf{adjacency structure} for $\mathcal{M}$. 
	
	Note that not every $k$-dimensional cube complex $\mathcal{M}$ admits an adjacency structure, but the criterion that the link at each vertex of $\mathcal{M}$ be a clique complex of a complete $k$-partite graph is sufficient to ensure that it does (c.f. Remark \ref{rem:ufp}(ii)).
\end{defn}

\begin{figure}[ht]
	\begin{center}
		\includegraphics[scale=0.9]{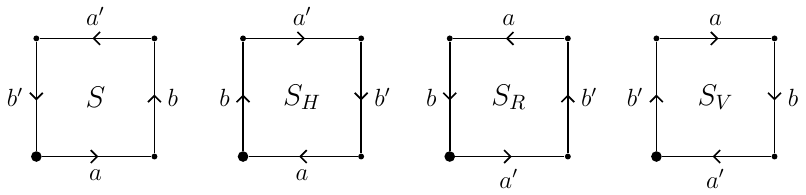}
	\end{center}
	\caption[.]{Given a pointed square $S = [a,b,a',b'] \in \mathcal{S}_2$, we write $S_H$, $S_R$, $S_V$ to denote the three pointed squares in $\mathcal{S}_2$ depicted above (compare with (\ref{eq:squares})). Geometrically, these squares are all in the same orbit under reflection in the horizontal and/or vertical directions, meaning they represent the same element in $\mathcal{S}_2'$. Figure \ref{fig:cube_symmetries} extends this notion to cubes.}\label{fig:square_symmetries}
\end{figure}

\begin{defn}\label{def:cube_complex}
	Let $\mathcal{M}$ be a $k$-dimensional cube complex with vertex set $V$, edge set $E$, and adjacency structure $E_1,\ldots , E_k$. Write $\mathcal{S}_2' = \mathcal{S}_2'(\mathcal{M})$ for the set of geometric squares of which $\mathcal{M}$ consists. We write elements of $\mathcal{S}_2'$ as ordered $4$-tuples of their oriented edge labels $(a,b,a',b')$ for $a,a' \in E_i$, $b,b' \in E_j$, where the map $e \mapsto e\inv$ reverses the orientation of the edge. We use square (``pointy") brackets if we wish to emphasise that a square is labelled according to some predetermined orientation and starting from some basepoint (``pointed"), and write $\mathcal{S}_2 = \mathcal{S}_2(\mathcal{M})$ to denote the set of all pointed squares. For each square $S = [a,b,a',b'] \in \mathcal{S}_2$, write:
	\begin{equation} \label{eq:squares}
		S_H := \big[ a\inv, (b')\inv, (a')\inv, b\inv \big], \, S_R :=
		\big[ a',b',a,b \big], \, S_V := \big[ (a')\inv, b\inv, a\inv, (b')\inv \big];
	\end{equation}
	geometrically these can be interpreted as the pointed squares which lie in the same orbit of $S$ under the actions of reflection in the $a$ direction, rotation by $\pi$, and reflection in the $b$ direction (Figure \ref{fig:square_symmetries}). Then 
	\[
	\mathcal{S}_2 = \lbrace S,S_H,S_R,S_V \mid S \in \mathcal{S}_2' \rbrace.
	\]
	Write $F(p,q) := \lbrace [a,b,a',b'] \in \mathcal{S}_2 \mid a,a' \in E_p\text{, and } b,b' \in E_q \rbrace$, and identify $F(p,q)$ with $F(q,p)$ via the map  $\varphi : [a,b,a',b'] \mapsto \big[ (b')\inv , (a')\inv , b\inv , a\inv\big]$.
	
	Similarly, we write $\mathcal{S}_3' = \mathcal{S}_3'(\mathcal{M})$ for the set of geometric cubes which $\mathcal{M}$ comprises, and we denote elements of $\mathcal{S}_3'$ by ordered $6$-tuples of their faces $(A,B,C,A',B',C')$ for $A,A' \in F_{ij}$, $B,B' \in F_{il}$, and $C,C' \in F_{jl}$. As above, we use square brackets to indicate that a cube is pointed and oriented, and for each cube $S = [ A,B,C,A',B',C'] \in \mathcal{S}_3$, we write:
	\begin{equation}\label{eq:cube_symmetries}
		\begin{array}{rclrcl}
			S_H &:=& \big[ A_H,B_H,C_H',A_H',B_H',C_H \big],
			& S_{HI} &:=& \big[ A',B_R,C_R',A,B_R',C_R \big], \\
			S_R &:=& \big[ A_R,B',C',A_R',B,C \big],
			& S_{RI} &:=& \big[ A_V',B_V',C_V',A_V,B_V,C_V \big], \\
			S_V &:=& \big[ A_V,B_H',C_H,A_V',B_H,C_H' \big],
			& S_{VI} &:=& \big[ A_R',B_R',C_R,A_R,B_R,C_R' \big], \\
			S_I &:=& \big[ A_H',B_V,C_V,A_H,B_V',C_V' \big]. & & &
		\end{array}
	\end{equation}
	These are the cubes $[X_1,\ldots ,X_6]$ which belong to the same orbit as $[A,B,C,A',B',C']$ under action by the symmetry group of the cube, with the property that if $A \in F(i,j)$, then $X_1 \in F(i,j)$ (Figure \ref{fig:cube_symmetries}). Write $\mathcal{S}_3$ for the set which comprises each $S \in \mathcal{S}_3'$ and all of the corresponding pointed cubes above. Write
	\begin{multline*}
		F(p,q,r) := \lbrace [A,B,C,A',B',C'] \in \mathcal{S}_3 \mid A,A' \in F(p,q) \\ B,B' \in F(p,r)\text{, and }C,C' \in F(q,r)\rbrace,
	\end{multline*}
	and identify $F(a,b,c)$ with $F(a,c,b)$ via the map 
	\[(A,B,C,A',B',C') \longmapsto \big(\varphi(A),C_H',B_H',\varphi(A'),C_H,B_H\big),
	\]
	where $\varphi([a,b,a',b']) = [(b')\inv, (a')\inv, b\inv, a\inv]$. Likewise we are able to identify $F(a,b,c)$ with each of the sets $F(\sigma(a,b,c))$, for each permutation $\sigma$.  
	
	For $3 \leq n \leq k$, we inductively define the sets $\mathcal{S}_n' = \mathcal{S}_n'(\mathcal{M})$ of geometric $n$-cubes of which $\mathcal{M}$ consists, and we write elements of $\mathcal{S}_n'$ as ordered $(2n)$-tuples of their faces (the incident elements of $\mathcal{S}_{n-1}$). Here, we define $\mathcal{S}_n$ as the set of all $(2n)$-cubes $(A_1,\ldots ,A_{2n})$ which belong to the orbit of some $(A_1^1,\ldots , A_1^n,A_2^1,\ldots ,A_2^n) \in \mathcal{S}_n'$ under the action of the group of reflections of the $(2n)$-cube; analogously to above, we write 
	\[
	F(p_1, \ldots , p_n) := \big\lbrace \big[A_1^1,\ldots , A_1^n,A_2^1,\ldots ,A_2^n \big] \in \mathcal{S}_n \bigm\vert A_1^i,A_2^i \in F\big( p_1, \ldots , \hat{p}_{n-i+1}, \ldots , p_n \big) \big\rbrace ,
	\]
	and we can identify the sets $F(\sigma(p_1,\ldots , p_n))$ for each permutation $\sigma$.
\end{defn}

\begin{figure}[ht]
	\begin{center}
		\includegraphics[scale=0.9]{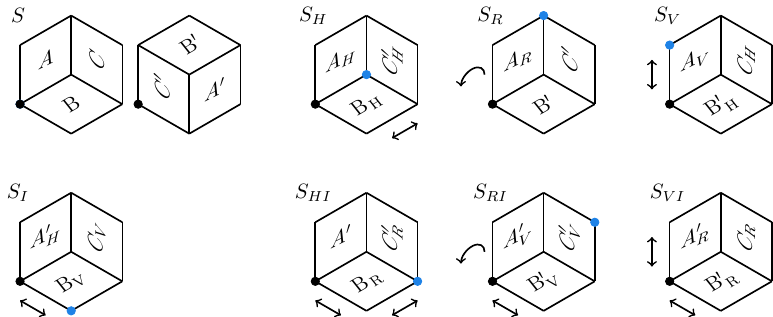}
	\end{center}
	\caption[.]{For a pointed, oriented $3$-cube $S = [A,B,C,A',B',C'] \in \mathcal{S}_3$, the seven corresponding cubes from (\ref{eq:cube_symmetries}) are defined by reflecting and rotating $S$ according to the arrows above. The transformations map the original basepoint to a new vertex (blue), but the new cubes are given the same basepoint and orientation as $S$ (black).}\label{fig:cube_symmetries}
\end{figure}

\begin{rem}
	To recap, given a $k$-dimensional cube complex $\mathcal{M}$ with an adjacency structure, we assign an (arbitrary but fixed) orientation and basepoint to each $n$-dimensional cell of $\mathcal{M}$. We write $\mathcal{S}_n'$ to denote the geometric $n$-dimensional cells of $\mathcal{M}$---each element of $\mathcal{S}_n'$ gives rise to $2^n$ pointed and oriented $n$-cells (e.g. when $n=1$ we called these $u$ and $u\inv$, and when $n=2$ we called them $S$, $S_H$, $S_R$ and $S_V$). Giving each cell a basepoint like this will help us determine which cells are \textit{adjacent} in Definition \ref{def:adjacency_functions}.
\end{rem}

\begin{defn}\label{def:BM}
	Let $k \geq 2$, and let $E_1,\ldots , E_k$ be finite sets of respective even cardinalities $2m_1,\ldots, 2m_k$, with each $m_i \geq 2$. Suppose that each set $E_i$ is endowed with a fixed-point-free involution, denoted $a \mapsto a\inv$. For each $i, j$ with $i \neq j$, write $F(i,j) := E_i \times E_j \times E_i \times E_j$, let $R \subseteq \bigsqcup_{i \neq j} F(i,j)$, and define the group 
	\[
	\Gamma := \langle E_1 \sqcup \cdots \sqcup E_k \mid aba'b' = 1 \text{ whenever } (a,b,a',b') \in R \rangle.
	\]
	Suppose firstly that $k=2$. We call $\Gamma$ a \textbf{BM-group} if $R$ has the following properties:
	\begin{itemize}
		\item[\textbf{C1}] For each element $(a,b,a',b') \in R$, each of $(a\inv, (b')\inv, (a')\inv, b\inv)$, $(a',b',a,b)$, and $((a')\inv, b\inv, a\inv, (b')\inv)$ is also in $R$, and all four $4$-tuples are distinct.
		\item[\textbf{C2}] Each of the projections of $R$ onto the subproducts of the form $E_i \times E_j$ or $E_j \times E_i$, for all $i \neq j$, is bijective.
	\end{itemize}
	These groups were developed extensively in \cite{BurMoz1997}, \cite{KimRob2002} and \cite{Wis1996}. In order to generalise BM-groups to $k\geq 3$, we require the construction of subsets $\mathcal{S}_n \subseteq \prod_{\alpha=1}^{2n} R$ for each $n \in \lbrace 2,\ldots , k\rbrace$. We begin by illustrating $\mathcal{S}_3$. Compare \textbf{C1} with (\ref{eq:squares}); we will have in mind that each element of $R$ can label a pointed square. Then elements of $\mathcal{S}_n$ will be $2n$-tuples of such squares which can be arranged to form $n$-dimensional cubes, as in Definition \ref{def:cube_complex}.
	
	Suppose now that $k \geq 3$, and that $R$ has properties \textbf{C1} and \textbf{C2}. Write $R(p,q) := R \cap F(p,q)$, and fix $(a_1,b_1,a_2,b_2) \in R(p,q)$ and $(a_1,c_1,a_3,c_2) \in R(p,r)$. 
	
	Also suppose that we can find some \textit{unique} elements $a_4,b_3,b_4,c_3,c_4 \in \bigcup_l E_l$ such that $\big(b_1,c_3\inv,b_4,c_1\inv\big)$, $\big(b_2,c_2\inv,b_3,c_4\big) \in R(q,r)$, $\big(a_2,c_4\inv,a_4,c_3\big) \in R(p,r)$, and $(a_3,b_3,a_4,b_4) \in R(p,q)$. Equivalently, suppose that the same is true if we are given $(a_1,b_1,a_2,b_2) \in R(p,q)$ and $\big(b_1,c_3\inv,b_4,c_1\inv\big) \in R(q,r)$. Geometrically, we can view each $4$-tuple as a square, such that each of $a_1, \ldots , a_4$, $b_1, \ldots , b_4$, $c_1, \ldots , c_4$ labels the edges of a cube. Write $\mathcal{S}_3$ for the set of $6$-tuples of elements of $R$ which correspond to the faces of all such cubes, pointed and oriented according to some predetermined orientation.
	
	We extend the notion of $\mathcal{S}_3$ to that of $\mathcal{S}_n$ as follows. Suppose that $k \geq n$, and fix $p \in \lbrace 1,\ldots , k\rbrace$. Let $J \subseteq (\lbrace 1,\ldots , k\rbrace \setminus \lbrace p\rbrace )$ be some subset of cardinality $(n-1)$, and let $L \subseteq J$ denote a subset of cardinality $|L| \geq 0$.
	
	For each $u \in E_p$ and each $j \in J$, we fix an element $\big(u, v_j, u^j, w_j\big) \in R(p,j)$. We assume that, for each $L \subseteq J$ with $0 \leq |L| \leq n-2$, we can find \textit{unique} elements $u^L \in E_p$ and $v_i^L, w_i^L \in E_i$ such that:
	\begin{enumerate}[label=(\alph*)]
		\item $\Big(u^{L}, v_{j}^{L}, u^{L \cup \lbrace j\rbrace}, w_{j}^{L}\Big) \in R(p, j)$, whenever $j \in J$ and $j \notin L$.
		\item $\Big( \big(v_{i}^{L}\big)\inv , v_{j}^{L} , \Big(w_{i}^{L\cup \lbrace j\rbrace}\Big)\inv , w_{j}^{L\cup \lbrace i\rbrace}\Big) \in R(i,j)$, for all $i,j \in J$ with $i \neq j$ and $i,j \notin L$.
		\item $\Big( w_{i}^{L}, \big( w_{j}^{L}\big)\inv , v_{i}^{L\cup \lbrace j\rbrace}, \Big( v_{j}^{L\cup \lbrace i\rbrace}\Big)\inv\Big) \in R(i,j)$, for all $i,j \in J$ with $i \neq j$ and $i,j \notin L$.
	\end{enumerate} 
	We write $\square(u,v_J,u^J,w_J)$ for the $2n$-tuple of $R$ comprising the initial choices $(u,v_j,u^j,w_j)$ and the elements above which they uniquely determine, listed according to some predetermined order. We write $\mathcal{S}_n = \mathcal{S}_n(\Gamma)$ for the set of all $2n$-tuples $\square(u,v_J,u^J,w_J)$. Elements of $\mathcal{S}_n$ can be regarded as pointed, oriented $n$-cubes whose $2$-faces are labelled by elements of $R$ (Figure \ref{fig:k-dim_cubes}). We call elements of $\mathcal{S}_n$ \textbf{pointed} $n$\textbf{-cubes}.
	
	Note that we may identify each $F(i,j)$ with $F(j,i)$ (and hence $R(i,j)$ with $R(j,i)$) via the isomorphism $(a,b,a',b') \mapsto \big((b')\inv),(a')\inv,b\inv,a\inv\big)$, such that we need only consider only those $i,j$ with $i < j$. We refrained from doing so immediately in order to simplify some of the above notation. 
	
	Summarising the above process, if $k\geq 3$, we call $\Gamma$ a $k$\textbf{-cube group} whenever, in addition to conditions \textbf{C1} and \textbf{C2} above, $R$ also satisfies:
	\begin{itemize}
		\item[\textbf{C3}] Fix $p \in \lbrace 1, \ldots , k \rbrace$. Then for each set of $(n-1)$ elements $\big(u, v_j, u^j, w_j\big) \in R(p,j)$, where $j \in J \subseteq ( \lbrace 1, \ldots , k \rbrace \setminus \lbrace p \rbrace)$ and $|J| = n-1$, then for each $L \subseteq J$ with $0 \leq |L| \leq n-2$ we can find \textit{unique} elements $u^L,v_i^L, w_i^L \in E_i$ which satisfy (a)--(c) above, and hence we are able to construct subsets $\mathcal{S}_n \subseteq \prod_{\alpha=1}^{2n} R$ for each $n \in \lbrace 2,\ldots , k\rbrace$.
	\end{itemize}
\end{defn}

\begin{rem}
	In \cite{KhaVdo2018}, an alternative condition was given on $R$, which is equivalent to \textbf{C1} and \textbf{C2}:
	\begin{itemize}
		\item[\textbf{C1'}] The product sets $E_i E_j$ and $E_j E_i$ are equal, contain no $2$-torsion, and have cardinality $|E_i E_j | = |E_i| \cdot |E_j| = 4m_im_j$.
	\end{itemize}
	Since we are explicitly constructing $k$-cube complexes, we mainly rely on properties \textbf{C1} and \textbf{C2} in this paper.
\end{rem}

\begin{figure}
	\begin{center}
		\includegraphics[scale=0.9]{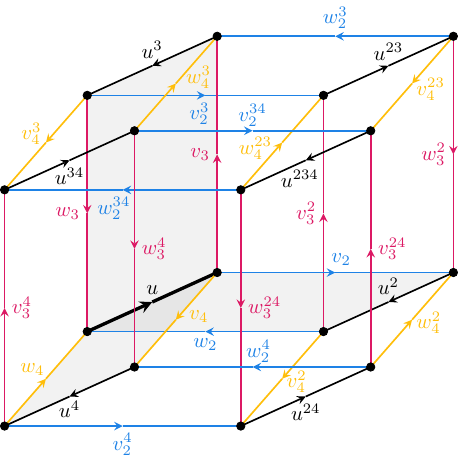}
	\end{center}
	\captionsetup{singlelinecheck=off}
	\caption[.]{Let $k \geq 4$. Above is depicted a pointed $4$-cube in $\mathcal{S}_4$, for some $k$-cube group with adjacency structure $E_1,\ldots , E_k$. Let $u^L$ be elements of $E_1$, and $v_i^L,w_i^L \in E_i$. Fix three mutually-adjacent squares: elements of $F(1,j)$ labelled $(u,v_j,u^j,w_j)$, for $j \in \lbrace 2,3,4\rbrace$. Then each of the remaining $u^L,v_i^L,w_i^L$ is uniquely-determined, such that they label the edges of the $4$-cube above. We have condensed the notation for the sets $L$ for clarity.}\label{fig:k-dim_cubes}
\end{figure}

\begin{lem}\label{lem:k-1-cube}
	Let $\Gamma = \langle X \mid R\rangle$ be a $k$-cube group with adjacency structure $E_1,\ldots , E_k$, and let $\Gamma'(p) = \langle X'(p) \mid R'(p)\rangle \subset \Gamma$ be the subgroup obtained by removing all of the elements of some set $E_p$ from the generating set $X$. Then $\Gamma'(p)$ is a $(k-1)$-cube group, with adjacency structure $E_1,\ldots , \hat{E}_p,\ldots , E_k$.
	
	By induction, we can form a $(k-m)$-cube subgroup by removing all elements of $m$ sets $E_{p_1}, \ldots , E_{p_m}$ from the generating set $X$. We denote such a group by $\Gamma'(p_1,\ldots , p_m)$.
\end{lem}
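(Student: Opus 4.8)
The plan is to write down the defining data of $\Gamma'(p)$ explicitly, check that conditions \textbf{C1}--\textbf{C3} descend to it, and then use the geometry of the associated cube complexes to confirm that the resulting abstract group is genuinely the subgroup of $\Gamma$ generated by $X\setminus E_p$. Concretely, put $X'(p):=X\setminus E_p$ and let $R'(p):=R\cap\bigsqcup_{i,j\neq p}F(i,j)$ be the set of relators of $\Gamma$ that involve no generator from $E_p$; this is the presentation $\langle X'(p)\mid R'(p)\rangle$ of the statement, and each surviving $E_i$ keeps its fixed-point-free involution.

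First I would verify \textbf{C1}--\textbf{C3} for the pair $\big(R'(p),\{E_i\}_{i\neq p}\big)$. \textbf{C1}: if $(a,b,a',b')\in R'(p)$ then $a,a'\in E_i$ and $b,b'\in E_j$ for some $i,j\neq p$, and the three associated tuples $(a\inv,(b')\inv,(a')\inv,b\inv)$, $(a',b',a,b)$, $((a')\inv,b\inv,a\inv,(b')\inv)$ still involve only $E_i$ and $E_j$, so by \textbf{C1} for $R$ they lie in $R$, hence in $R'(p)$, and all four are distinct. \textbf{C2}: since $F(i,j)\subseteq\bigsqcup_{l,m\neq p}F(l,m)$ for $i,j\neq p$, we have $R'(p)\cap F(i,j)=R\cap F(i,j)$, so the projections of $R'(p)$ onto $E_i\times E_j$ and $E_j\times E_i$ agree with those of $R$ and are bijective. \textbf{C3}: given $p'\neq p$, a subset $J\subseteq\{1,\dots,k\}\setminus\{p,p'\}$ of size $n-1$ with $2\le n\le k-1$, and elements $(u,v_j,u^j,w_j)\in F(p',j)$, apply \textbf{C3} for $\Gamma$ (legitimate because $n\le k-1<k$) to precisely this data; the unique $u^L,v_i^L,w_i^L$ it produces all lie in sets $E_i$ with $i\neq p$, so (a)--(c) hold within $\bigsqcup_{i,j\neq p}F(i,j)$ and supply the sets $\mathcal{S}_n(\Gamma'(p))\subseteq\prod_{\alpha=1}^{2n}R'(p)$ that \textbf{C3} requires. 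Hence $\Gamma'(p)=\langle X'(p)\mid R'(p)\rangle$ is a $(k-1)$-cube group with adjacency structure $E_1,\dots,\hat E_p,\dots,E_k$; by Proposition \ref{prop:clique} its one-vertex cube complex $\mathcal{M}'(p)$ has universal cover a product of $(k-1)$ trees $\prod_{i\neq p}T_i$, acted on freely and transitively by $\Gamma'(p)$.

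It remains to identify $\Gamma'(p)$ with the subgroup of $\Gamma$ generated by $X\setminus E_p$, i.e.\ to rule out new relations. Let $\mathcal{M}=\Gamma\backslash\Delta$ be the one-vertex cube complex of $\Gamma$, so $\widetilde{\mathcal{M}}=\Delta=T_1\times\cdots\times T_k$ and $\pi_1(\mathcal{M})=\Gamma$. Deleting from $\mathcal{M}$ every cell that contains an $E_p$-edge produces exactly the complex $\mathcal{M}'(p)$ above (the link of its vertex is the full subcomplex of $\lk_x(\mathcal{M})$ on the vertex set $\bigsqcup_{i\neq p}E(x)_i$). Fixing a base vertex $\tilde v\in\Delta$, the connected component of $\tilde v$ in the subcomplex of $\Delta$ spanned by all cells whose edges avoid direction $p$ is the convex slice $T_1\times\cdots\times\{\tilde v_p\}\times\cdots\times T_k$; it is simply connected, the covering $\Delta\to\mathcal{M}$ restricts to a covering of this slice onto $\mathcal{M}'(p)$, so it is the universal cover of $\mathcal{M}'(p)$ and the inclusion $\mathcal{M}'(p)\hookrightarrow\mathcal{M}$ is $\pi_1$-injective. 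The subgroup $\langle X\setminus E_p\rangle\le\Gamma$ preserves this slice (each generator fixes the $p$-th coordinate) and acts freely on it; by \textbf{C2} (edges out of any vertex are labelled bijectively by the corresponding $E_i$) it is transitive on the slice's vertices, so it is the deck group of $\text{(slice)}\to\mathcal{M}'(p)$, giving $\langle X\setminus E_p\rangle\cong\pi_1(\mathcal{M}'(p))=\Gamma'(p)$.

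For the final sentence, iterate: after deleting $E_{p_1},\dots,E_{p_m}$ the relator set is $R\cap\bigsqcup_{i,j\notin\{p_1,\dots,p_m\}}F(i,j)$, which is independent of the order, so $m$ applications of the single-removal case yield a $(k-m)$-cube group $\Gamma'(p_1,\dots,p_m)$. I expect the only real obstacle to be the $\pi_1$-injectivity of $\mathcal{M}'(p)\hookrightarrow\mathcal{M}$ --- that is, that erasing generators creates no hidden relations among the survivors; the convex-slice argument handles this, and one could equally invoke local convexity of $\mathcal{M}'(p)$ in the non-positively curved complex $\mathcal{M}$. Everything else is a routine restriction of \textbf{C1}--\textbf{C3} to the sub-index set $\{1,\dots,k\}\setminus\{p\}$.
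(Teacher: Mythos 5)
Your argument is correct, and it is considerably more than the paper provides: the paper states this lemma with no proof at all (the \qed is attached directly to the statement), implicitly treating both halves --- that \textbf{C1}--\textbf{C3} restrict to the sub-index set, and that the abstractly presented group $\langle X'(p)\mid R'(p)\rangle$ really is the subgroup of $\Gamma$ generated by $X\setminus E_p$ --- as immediate. Your verification of \textbf{C1}--\textbf{C3} is routine and matches what the authors must have in mind; the genuinely non-obvious content, which you correctly isolate, is the absence of hidden relations among the surviving generators, i.e.\ the injectivity of the natural map $\langle X'(p)\mid R'(p)\rangle\to\Gamma$. Your convex-slice argument handles this cleanly: the slice $T_1\times\cdots\times\{\tilde v_p\}\times\cdots\times T_k$ is simply connected, covers the one-vertex subcomplex $\mathcal{M}'(p)$, and $\langle X\setminus E_p\rangle$ acts on its vertices freely and simply transitively, so it coincides with the deck group $\pi_1(\mathcal{M}'(p))$. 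This is exactly the kind of reasoning (via Proposition \ref{prop:prod_of_trees}, or equivalently local convexity of $\mathcal{M}'(p)$ in the nonpositively curved complex $\mathcal{M}$) that the authors' other ``generalisation of \cite{KimRob2002}'' proofs rely on, so your route is the natural one; it simply makes explicit a step the paper suppresses entirely. One cosmetic remark: the transitivity of $\langle X\setminus E_p\rangle$ on the slice's vertices follows already from connectedness of the slice together with the labelling of edges by generators --- the appeal to \textbf{C2} there is not needed --- but this does not affect correctness.
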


\begin{proof}
	It follows immediately from the definition of \textbf{C3} that if $R$ satisfies \textbf{C3} for some given $k$, then the set $R \setminus R(p,j)$ satisfies \textbf{C3} for dimension $k-1$. Iteration of this process demonstrates that each set $R(p_1, \ldots , p_m)$ satisfies \textbf{C3} for each $m \leq k$, which is enough to prove the lemma.
\end{proof}

\begin{prop}\label{prop:amalgam}
	Let $\Gamma$ be a $k$-cube group with adjacency structure $E_1,\ldots , E_k$, and write $\Gamma'(p_1,\ldots , p_m) = \langle X'(p_1,\ldots , p_m) \mid R'(p_1,\ldots , p_m)\rangle \subset \Gamma$ to denote the $(k-m)$-cube subgroup constructed in Lemma \ref{lem:k-1-cube}. Then
	\[
	\Gamma = \big(\big(\big( \Gamma'(1) \ast_{\langle X'(1) \cap X'(2) \rangle} \Gamma'(2) \big) \ast_{\langle X'(3)\rangle} \Gamma'(3) \big) \ast_{\langle X'(4)\rangle} \cdots \big)\ast_{\langle X'(k) \rangle} \Gamma'(k),
	\]
	where $\ast_G$ is the amalgamated free product over the group $G$.
\end{prop}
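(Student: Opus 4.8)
The plan is to forget the cube complex entirely and argue at the level of group presentations. By Definition~\ref{def:BM} the group $\Gamma = \langle E_1 \sqcup \cdots \sqcup E_k \mid aba'b' = 1 \text{ whenever } (a,b,a',b') \in R\rangle$ is presented \emph{solely} by the two-dimensional relators coming from $R$, and each such relator is supported on exactly one unordered pair $\{E_i, E_j\}$ with $i \neq j$; the sets $\mathcal{S}_n$ and condition \textbf{C3} impose no relations on $\Gamma$, so they play no role here. By Lemma~\ref{lem:k-1-cube}, for every tuple $(p_1, \ldots, p_m)$ the subgroup $\Gamma'(p_1, \ldots, p_m)$ is generated by $X'(p_1, \ldots, p_m) = \bigsqcup_{i \notin \{p_1,\ldots,p_m\}} E_i$ and, as an abstract group, is presented by $\langle X'(p_1,\ldots,p_m) \mid R'(p_1,\ldots,p_m)\rangle$, where $R'(p_1,\ldots,p_m)$ consists of precisely those relators of $R$ whose supporting pair avoids $\{p_1,\ldots,p_m\}$. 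The single combinatorial fact needed --- and the only point at which the hypothesis $k \geq 3$ enters --- is that every pair $\{i,j\}$ omits some $p \in \{1,\ldots,k\}$, so that $R = \bigcup_{p=1}^{k} R'(p)$ and $E_1 \sqcup \cdots \sqcup E_k = \bigcup_{p=1}^{k} X'(p)$ as sets.

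Next I would compute the presentation of the right-hand side one factor at a time. For the first amalgam, $X'(1) \cap X'(2) = E_3 \sqcup \cdots \sqcup E_k$, and $\langle X'(1) \cap X'(2)\rangle$ is the $(k-2)$-cube subgroup $\Gamma'(1,2) = \langle E_3 \sqcup \cdots \sqcup E_k \mid R'(1,2)\rangle$, which by Lemma~\ref{lem:k-1-cube} sits inside each of $\Gamma'(1)$ and $\Gamma'(2)$ via the inclusion of the generators $E_3, \ldots, E_k$; since $R'(1,2) \subseteq R'(1) \cap R'(2)$, the standard presentation of an amalgamated free product (the union of the two generating sets modulo identification of the shared generators, together with the union of the two relator sets) gives
\[
\Gamma'(1) \ast_{\langle X'(1)\cap X'(2)\rangle} \Gamma'(2) \;=\; \langle\, E_1 \sqcup \cdots \sqcup E_k \;\mid\; R'(1),\, R'(2)\,\rangle \;=:\; G_2 .
\]
For $3 \leq p \leq k$ I then argue inductively: the generating set $X'(p)$ of $\Gamma'(p)$ is contained in the generating set $E_1 \sqcup \cdots \sqcup E_k$ of $G_{p-1}$, so forming $G_p := G_{p-1} \ast_{\langle X'(p)\rangle} \Gamma'(p)$ --- that is, the pushout of groups with one edge map the inclusion of the subgroup $\langle X'(p)\rangle$ of $G_{p-1}$ and the other the evident (in general non-injective, once $p\geq 3$) surjection onto $\Gamma'(p)$ --- merely adjoins the relators $R'(p)$; eliminating the duplicated generators by Tietze transformations gives $G_p = \langle E_1 \sqcup \cdots \sqcup E_k \mid R'(1),\ldots,R'(p)\rangle$.

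Setting $p = k$ then yields $G_k = \langle E_1 \sqcup \cdots \sqcup E_k \mid R'(1),\ldots,R'(k)\rangle$, and since $\bigcup_{p=1}^{k} R'(p) = R$ by the observation in the first paragraph, this is $\langle E_1 \sqcup \cdots \sqcup E_k \mid R\rangle = \Gamma$, which is exactly the asserted factorisation. One can reach the same conclusion topologically: $\Gamma = \pi_1(\mathcal{M})$ for the one-vertex $k$-cube complex $\mathcal{M}$, the fundamental group depends only on the $2$-skeleton, each square of $\mathcal{M}$ omits one of the $k$ coordinate directions so that $\mathcal{M}^{(2)} = \bigcup_p \mathcal{M}'(p)^{(2)}$ (with the $\mathcal{M}'(p_1,\ldots,p_m)^{(2)}$, the subcomplexes for $\Gamma'(p_1,\ldots,p_m)$, as the intersections), and the Seifert--van Kampen theorem for this cover produces precisely the iterated amalgam.

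The only genuinely delicate input is the presentation statement used in the first paragraph --- that each $\Gamma'(p_1,\ldots,p_m)$ has no relations beyond those supported on the surviving pairs, and in particular that $\Gamma'(1,2)$ embeds in both $\Gamma'(1)$ and $\Gamma'(2)$ --- which is exactly Lemma~\ref{lem:k-1-cube}, so I may assume it. Granting that, everything else is routine presentation bookkeeping; the one subtlety to treat with care is that from the third factor onward the amalgamation is over a subgroup that \emph{surjects onto} (rather than embeds into) the new factor $\Gamma'(p)$, so that $\ast_G$ must be understood as a pushout of groups, and the factorisation simply records that $\Gamma$ is assembled by successively imposing the relator sets $R'(p)$.
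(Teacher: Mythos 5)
Your proof is correct and follows essentially the same route as the paper's: build the iterated amalgam $G_2,\ldots ,G_k$, observe that each step merely adjoins the relator set $R'(p)$, and conclude from $R = \bigcup_{p} R'(p)$ that $G_k = \langle E_1 \sqcup \cdots \sqcup E_k \mid R\rangle = \Gamma$. Your explicit acknowledgement that, from the third factor onward, the amalgamation must be read as a pushout over a subgroup which only surjects onto $\Gamma'(p)$ is a point the paper's proof glosses over (it speaks of amalgamating over ``the free group generated by'' $X'(i)$), so your version is, if anything, the more careful one.
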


\begin{proof}
	Firstly, write $G_2 := \Gamma'(1) \ast_{\langle X'(1) \cap X'(2) \rangle} \Gamma'(2)$, and then 
	\[
	G_{i+1} := G_i \ast_{\langle X'(i)\rangle} \Gamma'(i),
	\]
	for all $2 \leq i \leq k-1$. Then $G_2$ has generating set $X$, and relation set $R'(1) \cup R'(2)$. At each step, we amalgamate over the free group generated by the intersection of $X$ with $X'(i)$, which is $X'(i)$. Hence each $G_i$ is generated by $X$, and has relation set $R'(1) \cup \cdots \cup R'(i)$. But $R'(1) \cup \cdots \cup R'(k) = R$, and so $G_k = \Gamma$.
\end{proof}

\begin{rem}
	It is important to note that the converse is not true---in general it is difficult to find a family of $k$-cube groups whose amalgamated product over the subgroups generated by their pairwise intersections forms a $(k+1)$-cube group.
\end{rem}

\begin{prop}\label{prop:prod_of_trees}
	A group $\Gamma$ is a $k$-cube group if and only if it is a torsion-free $\prod_{i=1}^k \tilde{A}_1$ group, that is, one which acts freely and transitively on the set of vertices of the product of $k$ trees.
\end{prop}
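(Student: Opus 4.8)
The plan is to pass between the group-theoretic data $(\Gamma;E_1,\ldots,E_k;R)$ and the geometry of a product of $k$ trees through an intermediate one-vertex $k$-dimensional cube complex, applying Proposition~\ref{prop:clique} at the decisive step. For the forward implication, let $\Gamma$ be a $k$-cube group with adjacency structure $E_1,\ldots,E_k$ and relation set $R$. I would build a cube complex $\mathcal{M}$ with a single vertex $x$: one (unoriented) edge for each inverse-pair $\{e,e\inv\}$ with $e\in E_i$; one square attached along the loop $aba'b'$ for each \textbf{C1}-symmetry-class of $4$-tuples $(a,b,a',b')\in R$; and, inductively for $3\leq n\leq k$, one $n$-cell for each element of $\mathcal{S}_n$, with condition \textbf{C3} guaranteeing the attaching maps are defined. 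The crux is to identify $\lk_x(\mathcal{M})$ with the clique complex of the complete $k$-partite graph on $E(x)_1\sqcup\cdots\sqcup E(x)_k$: condition \textbf{C2} gives that, between any two parts $E(x)_i$ and $E(x)_j$, the $1$-skeleton of the link is $K_{2m_i,2m_j}$ (bijectivity of the projections of $R$ means each incident pair of oriented $i$- and $j$-edges lies on exactly one square), while \textbf{C1} and \textbf{C3} ensure the higher simplices of the link are precisely the cliques. Proposition~\ref{prop:clique} then gives $\tilde{\mathcal{M}}=T_1\times\cdots\times T_k$ with $T_i$ regular of valency $2m_i$. Since $\mathcal{M}$ has one vertex, $\pi_1(\mathcal{M})$ is the group presented by the edges and squares of $\mathcal{M}$ (the higher cells leave $\pi_1$ unchanged), which is $\langle E_1\sqcup\cdots\sqcup E_k\mid R\rangle=\Gamma$. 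The deck action of $\Gamma=\pi_1(\mathcal{M})$ on $\tilde{\mathcal{M}}$ is free and is transitive on the fibre over $x$, namely all of $V(\tilde{\mathcal{M}})$; freeness on the $\cat(0)$ space $\tilde{\mathcal{M}}$ forces $\Gamma$ to be torsion-free, since a nontrivial finite-order element would fix a point.

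For the converse, let $\Gamma\leq\prod_{i=1}^k\Aut(T_i)$ be torsion-free and act transitively on $V(\Delta)$, where $\Delta=T_1\times\cdots\times T_k$. Torsion-freeness makes the action free (equivalently, without inversions on each factor), so $\mathcal{M}:=\Gamma\backslash\Delta$ is an honest cube complex with exactly one vertex, and is finite. Fix a base vertex $x_0\in\Delta$; its incident edges split into $k$ classes by the tree factor they move in, and for each neighbour $y$ of $x_0$ in direction $i$ let $g_y\in\Gamma$ be the unique element with $g_yx_0=y$ and set $E_i:=\{g_y\}$. Since $\Gamma$ respects the factorisation, each $E_i$ is closed under inversion and, passing to $\mathcal{M}$, the sets $E_1,\ldots,E_k$ form an adjacency structure: the link of the vertex of $\mathcal{M}$ is $\lk_{x_0}(\Delta)$, which is the join of the $k$ discrete vertex-links of the $T_i$, i.e.\ the clique complex of a complete $k$-partite graph. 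Reading off the squares of $\mathcal{M}$ as $4$-tuples of generators defines $R$; \textbf{C1} records the four pointed representatives of each geometric square, and \textbf{C2} holds because in a product of trees a square is uniquely specified by one edge in each of two directions. Condition \textbf{C3}, finally, is the assertion that in $\Delta$ an $n$-cube is freely and uniquely spanned by $n$ pairwise-orthogonal edges at a vertex---this is exactly the unique-filling statement (a)--(c) of Definition~\ref{def:BM}---so the sets $\mathcal{S}_n$ exist. As $\Delta$ is $\cat(0)$ and hence simply connected, $\Delta=\tilde{\mathcal{M}}$ and $\Gamma=\pi_1(\mathcal{M})=\langle E_1\sqcup\cdots\sqcup E_k\mid R\rangle$, so $\Gamma$ is a $k$-cube group.

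I expect the main obstacle to be the precise dovetailing, in both directions, of the inductive data $\mathcal{S}_n$ and condition \textbf{C3} with the ``product of trees'' filling property: in the forward direction, verifying that \textbf{C3} supplies exactly the cells needed for $\lk_x(\mathcal{M})$ to be a \emph{full} clique complex rather than merely a flag-like $2$-skeleton, and in the converse, that every cube of $\Delta$ genuinely arises as some $\square(u,v_j,u^j,w_j)$ with the uniqueness built into (a)--(c). A subsidiary point deserving care is the bookkeeping around torsion-freeness: it is what guarantees that $\Gamma\backslash\Delta$ is a one-vertex cube complex (and not an orbifold) with $\pi_1=\Gamma$, while in the forward direction the corresponding freeness is automatic for a deck transformation group and yields torsion-freeness through the $\cat(0)$ fixed-point theorem.
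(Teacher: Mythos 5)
Your argument is correct and is essentially the intended one: the paper's proof simply cites the $k$-dimensional generalisation of Lemma 2.2 and Theorem 3.4 of \cite{KimRob2002}, and what you have written out --- passing through the one-vertex cube complex, verifying the link condition from \textbf{C1}--\textbf{C3} so that Proposition \ref{prop:clique} applies, and identifying $\Gamma$ with the deck group (with torsion-freeness from the $\cat(0)$ fixed-point theorem) --- is exactly that generalisation made explicit. The only bookkeeping point to tighten is that the $n$-cells of $\mathcal{M}$ should be indexed by symmetry classes of pointed cubes in $\mathcal{S}_n$ rather than by elements of $\mathcal{S}_n$ themselves, which you have already flagged.
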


\begin{proof}
	The proof that a torsion-free group of type $\prod_{i=1}^k \tilde{A}_1$ is a $k$-cube group follows the same argument as that of Theorem 3.4 in \cite{KimRob2002}. By the same proof, the fact that a $k$-cube group $\Gamma$ acts on a product of $k$-many trees follows from \ref{prop:clique} and considerations in \cite{BriWis1999}. It is enough to note that elements of $\Gamma$ correspond to paths in the $1$-skeleton of $\Delta$. Then to show that the action is free and transitive, the remainder of Theorem 3.4 of \cite{KimRob2002} can be easily generalised from $k=2$ to arbitrary $k$.
\end{proof}

\begin{rem}
	Vdovina, in \cite{VdoPrep}, used Proposition \ref{prop:prod_of_trees} as the definition of a $k$-cube group.
\end{rem}

\begin{ex}\label{ex:M}
	The $k$-dimensional cube complex $\mathcal{M}$ with adjacency structure $E_1,\ldots , E_k$ constructed in Definition \ref{def:cube_complex} is a $k$-cube group. Indeed, each $k$-cube group yields such a complex with a single vertex, by a relatively obvious process generalised from those in \cite[\S 6.1]{BurMoz2000}, and \cite[\S 4.1]{KhaVdo2018}. We may henceforth regard a $k$-cube group $\Gamma$ both algebraically, and geometrically as the corresponding cube complex with edges labelled by elements of $\Gamma$. If a clear distinction needs to be made, we may write $\mathcal{M}(\Gamma)$ for the geometric realisation of the cube complex.
\end{ex}

\begin{ex}\label{ex:26_cubes}
	Consider the group $\Gamma_{\lbrace 3,5,7 \rbrace}$ from Example 3.17 of \cite{RunStiVdo2019}, defined as follows:
	\[
	\Gamma_{\lbrace 3,5,7 \rbrace} := \langle a_1,a_2, b_1,b_2,b_3, c_1,c_2,c_3,c_4 \mid R \rangle,
	\]
	where
	\begin{align*}
		R := \big\lbrace &a_1 b_1 a_2 b_2, a_1 b_2 a_2 b_1\inv,\, a_1 b_3 a_2\inv b_1,\, a_1 b_3\inv a_1 b_2\inv,\, a_1 b_1\inv a_2\inv b_3,\, a_2 b_3 a_2 b_2\inv, \\ 
		&a_1 c_1 a_2\inv c_2\inv,\, a_1 c_2 a_1 \inv c_3,\, a_1 c_3 a_2\inv c_4\inv,\, a_1 c_4 a_1 c_1\inv, \\
		&a_1 c_4\inv a_2 c_2,\, a_1 c_3\inv a_2 c_1,\, a_2 c_3 a_2 c_2\inv,\, a_2 c_4 a_2\inv c_1, \\
		&c_1 b_1 c_3 b_3\inv,\, c_1 b_2 c_4 b_2\inv ,\, c_1 b_3 c_4\inv b_2,\, c_1 b_3\inv c_4 b_3,\, c_1 b_2\inv c_2 b_1,\, c_1 b_1\inv c_4 b_1\inv, \\
		&c_2 b_2 c_3\inv b_3\inv,\, c_2 b_3 c_4 b_1,\, c_2 b_3\inv c_3 b_3,\, c_2 b_2\inv c_3 b_2,\, c_2 b_1\inv c_3 b_1\inv,\, c_3 b_1 c_4 b_2	 \big\rbrace .
	\end{align*}
	This is a $3$-cube group with adjacency structure $\lbrace a_i,a_i\inv\rbrace, \lbrace b_i,b_i\inv \rbrace, \lbrace c_i,c_i\inv\rbrace$ (Figure \ref{fig:example}).
\end{ex}

\begin{figure}[h]
	\begin{center}
		\includegraphics[scale=0.9]{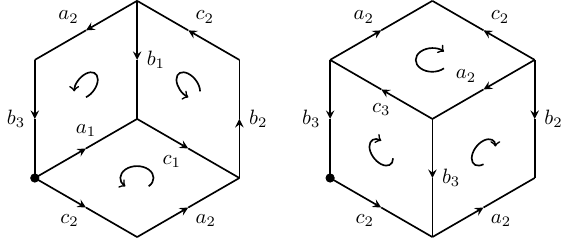}
	\end{center}
	\captionsetup{singlelinecheck=off}
	\caption[.]{Geometric realisation of the pointed, oriented cube in the cube complex corresponding to $\mathcal{S}_3\big(\Gamma_{\lbrace 3,5,7\rbrace}\big)$ labelled 
		\begin{multline*}
			\big[\big[a_1,b_1\inv,a_2,b_3\big], \big[a_2,c_1\inv,a_1\inv,c_2\big], \big[b_2,c_2,b_1,c_1\big], \\ \big[a_2\inv,b_3\inv,a_2\inv,b_2\big],  \big[a_2,c_3,a_2,c_2\inv\big], \big[b_3,c_2\inv,b_3\inv,c_3\inv\big]\big].
		\end{multline*}
		The choice of basepoints and orientations of the cubes and their faces ($2$-cells) is arbitrary, but must remain consistent across the entire complex.
	}\label{fig:example}
\end{figure}

\begin{defn}\label{def:adjacency_functions}
	Let $\Gamma$ be a $k$-cube group with adjacency structure $E_1,\ldots , E_k$, and let $\Delta$ be the rank-$k$ affine building which is the $k$-dimensional cube complex corresponding to $T(|E_1|) \times \cdots \times T(|E_k|)$. We identify elements of $\Gamma$ with vertices of $\Delta$, such that the set $\mathcal{S}_k$ can be identified with the set of pointed, oriented chambers ($k$-cubes) of $\Delta$.
	
	Let $p \in \lbrace 1,\ldots , k\rbrace$ be fixed, and let $A,B \in \mathcal{S}_k$ be the pointed $k$-cubes $\square(u,v_J,u^J,w_J)$,  $\square(x,y_J,x^J,z_J)$, respectively, for $J \subseteq (\lbrace 1,\ldots , k\rbrace \setminus \lbrace p\rbrace)$. We define \textbf{adjacency matrices} $M_1, \ldots , M_k$ to be square matrices with rows and columns indexed by $\mathcal{S}_k$, and with $AB$-th entries given by:
	\begin{itemize}
		\item $M_p(A,B):= 1$ if both of the following criteria are satisfied:
		\begin{enumerate}[label=(\roman*)]
			\item $v_j^L = \big(z_j^L\big)\inv$ and $w_j^L = \big(y_j^L\big)\inv $ for all $j \neq p$ with $j \notin L$,
			\item $u^L \neq (x^L)\inv$,
		\end{enumerate}
		for all $L\subseteq (\lbrace 1,\ldots , k\rbrace \setminus \lbrace p\rbrace)$ with $|L| \geq 0$. We define $M_p(A,B) := 0$ otherwise.
		\item $M_i(A,B) := 1$, for each $i \neq p$, if all of the following criteria are satisfied:
		\begin{enumerate}[label=(\roman*)]
			\item $(x^L)\inv = u^{L\cup \lbrace i \rbrace}$,
			\item $(y_j^L)\inv = w_j^{L \cup \lbrace i \rbrace}$, 
			\item $(z_j^L)\inv = v_j^{L \cup \lbrace i \rbrace}$, 
		\end{enumerate}
		for all $L\subseteq (\lbrace 1,\ldots , k\rbrace \setminus \lbrace i,p\rbrace)$ with $|L| \geq 0$, and all $j$ with $j \notin (L\cup \lbrace i,p\rbrace )$,
		\begin{enumerate}[label=(\roman*)]\setcounter{enumi}{3}
			\item $v_i^L \neq (y_i^L)\inv$ and $w_i^L \neq (z_i^L)\inv$, for all $L\subseteq (\lbrace 1,\ldots , k\rbrace \setminus \lbrace i,p\rbrace)$ with $|L| \geq 0$.
		\end{enumerate}
		We define $M_i(A,B) := 0$ otherwise.
	\end{itemize}
	For each $i \in \lbrace 1,\ldots , k\rbrace$, we say that $B$ is \textbf{adjacent in the} $E_i$ \textbf{direction}, or $E_i$\textbf{-adjacent}, to $A$ whenever $M_i(A,B) = 1$ (Figure \ref{fig:adjacency}).
\end{defn}

\begin{figure}[h]
	\begin{center}
		\includegraphics[scale=0.9]{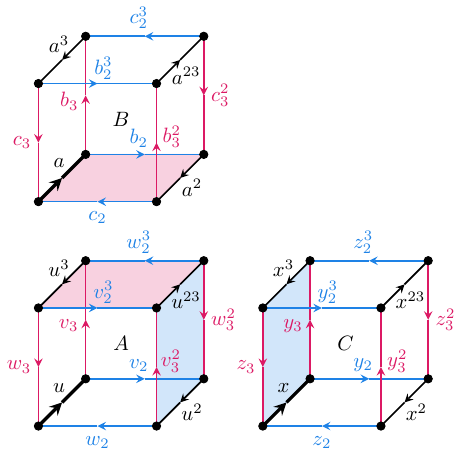}
	\end{center}
	\captionsetup{singlelinecheck=off}
	\caption[.]{Let $a^L,u^L,x^L \in E_1$, $(\ast)_2^L \in E_2$, and $(\ast)_3^L \in E_3$, where $E_1,E_2,E_3$ form the adjacency structure of some $3$-cube group. Consider the pointed $3$-cubes $A,B,C \in \mathcal{S}_3$. If $B$ is $E_3$-adjacent to $A$ (resp. $C$ is $E_2$-adjacent to $A$), then the magenta (resp. blue) $2$-faces above coincide. In general, for a $k$-cube group, $M_i(A,B) = 1$ implies that some $(k-1)$-faces of the geometric realisations of $A$ and $B$ in the corresponding cube complex coincide.
	}\label{fig:adjacency}
\end{figure}

\begin{lem}\label{lem:Mi}
	Let $\Gamma$ be a $k$-cube group with adjacency structure $E_1,\ldots , E_k$. Then each of the adjacency matrices $M_1,\ldots , M_k$ has entries in $\lbrace 0,1\rbrace$, and has at least three non-zero entries in each row.
\end{lem}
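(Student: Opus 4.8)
The first assertion needs no proof: by construction, Definition~\ref{def:adjacency_functions} assigns to each entry $M_i(A,B)$ the value $0$ or $1$.

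For the second assertion, fix a direction $i\in\{1,\dots,k\}$ and a pointed $k$-cube $A\in\mathcal{S}_k$. Since $k\geq 2$ we may choose an index $p\neq i$ and present $A=\square(u,v_j,u^j,w_j)$ and a candidate neighbour $B=\square(x,y_j,x^j,z_j)$ with $j$ running over $\{1,\dots,k\}\setminus\{p\}$, so that whether $M_i(A,B)=1$ is decided by criteria~(i)--(iv) of that definition. Criteria~(i)--(iii) force every component of $B$ except the ones lying in $E_i$: one obtains $x^L=(u^{L\cup\{i\}})\inv$, and $y_j^L=(w_j^{L\cup\{i\}})\inv$, $z_j^L=(v_j^{L\cup\{i\}})\inv$ for $j\neq i$; equivalently, the components of $B$ with index $j\neq i$ are, after reversing every edge, exactly the components of $A$ whose superscript contains $i$. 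Thus the only remaining freedom is the single element $y_i:=y_i^{\emptyset}\in E_i$. Given $y_i$, condition~\textbf{C2} determines the initial square $(x,y_i,x^i,z_i)\in F(p,i)$; the other $k-2$ initial squares are already fixed, and are genuine relators since, by~\textbf{C1}, each is the image under $S\mapsto S_H$ of the corresponding $2$-face of $A$ at level $\{i\}$; and all $k-1$ of these initial squares share the common first entry $x=(u^{i})\inv$, so by the unique extension property~\textbf{C3} they extend to a unique pointed $k$-cube $B(y_i)\in\mathcal{S}_k$.

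It remains to verify that $M_i\big(A,B(y_i)\big)=1$ precisely when $y_i\neq(v_i)\inv$. Since the reversed face of $A$ together with the chosen initial square already forms a valid system of $2$-faces, the uniqueness in~\textbf{C3} forces $B(y_i)$ to coincide with it, so criteria~(i)--(iii) hold at every level. For criterion~(iv): the choice $y_i=(v_i)\inv$ would make $(x,y_i,x^i,z_i)$ equal to $S_V=\big((u^i)\inv,(v_i)\inv,u\inv,(w_i)\inv\big)$, where $S=(u,v_i,u^i,w_i)$ is the corresponding $2$-face of $A$, and would thus force $z_i=(w_i)\inv$; moreover \textbf{C2} shows $S_V$ to be the \emph{only} relator with first entry $x$ and second entry $(v_i)\inv$, and also the only one with first entry $x$ and fourth entry $(w_i)\inv$, so the two inequalities of~(iv) at level $\emptyset$ exclude exactly the value $y_i=(v_i)\inv$, while a rigidity argument---propagating any higher-level violation of~(iv) back to level $\emptyset$ through the uniqueness of extensions---shows the remaining inequalities exclude nothing further. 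Hence $M_i\big(A,B(y_i)\big)=1$ for each of the $|E_i|-1=2m_i-1$ elements $y_i\in E_i\setminus\{(v_i)\inv\}$, and, $y_i^{\emptyset}$ being part of the data of $B(y_i)$, these cubes are pairwise distinct; since $m_i\geq 2$ we conclude $2m_i-1\geq 3$, so the $A$-row of $M_i$ has at least three non-zero entries. The one step demanding real care---and thus the main obstacle---is the claim just used about criteria~(i)--(iv): that the cube produced by~\textbf{C3} automatically satisfies (i)--(iii) and that (iv) rules out only the backtracking value; both reduce, via repeated use of~\textbf{C1} and the uniqueness half of~\textbf{C3}, to tracking how the $2$-faces of $A$ and $B(y_i)$ glue across a $2\times 2\times\cdots\times 2$ block of chambers, the remainder being bookkeeping.
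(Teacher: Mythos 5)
Your proof is correct and follows essentially the same route as the paper's: the adjacency criteria pin down every component of a candidate neighbour of $A$ except one generator in $E_i$, the non-backtracking condition in criterion (iv) excludes exactly one value of that generator, and the remaining $|E_i|-1 = 2m_i-1 \geq 3$ choices each determine a distinct adjacent cube via \textbf{C2} and \textbf{C3}. The paper carries this out for the $M_p$ case and appeals to symmetry for $i \neq p$ (and glosses the same propagation-of-(iv)-to-higher-levels point you flag as the delicate step), so your version is simply a more detailed rendering of the same argument.
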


\begin{proof}
	Consider the pointed $k$-cube $A := \square(u,v_J,u^J,w_J) \in \mathcal{S}_k(\Gamma)$, as constructed in Definition \ref{def:BM}. Since $|E_i| \geq 4$ for all $i$, and by property \textbf{C1}, we can find some $k$-cube $B := \square\big(x\inv, (z_J)\inv, (x^J)\inv, (v_J)\inv\big)$, where $x\inv \neq u$. Then \textbf{C2} implies that $(x^j)\inv \neq u^j$ for all $j \in J$. It follows that $M_p(A,B) = 1$, and a similar argument can be used to find a $k$-cube $C$ with $M_i(A,C) = 1$, for $i \neq p$.
	
	Hence, in each row in each $M_1,\ldots,M_k$, there are at least three non-zero entries, and by definition these are $\lbrace 0,1 \rbrace$-matrices.
\end{proof}

\begin{figure}[h]
	\begin{center}
		\includegraphics[scale=0.75]{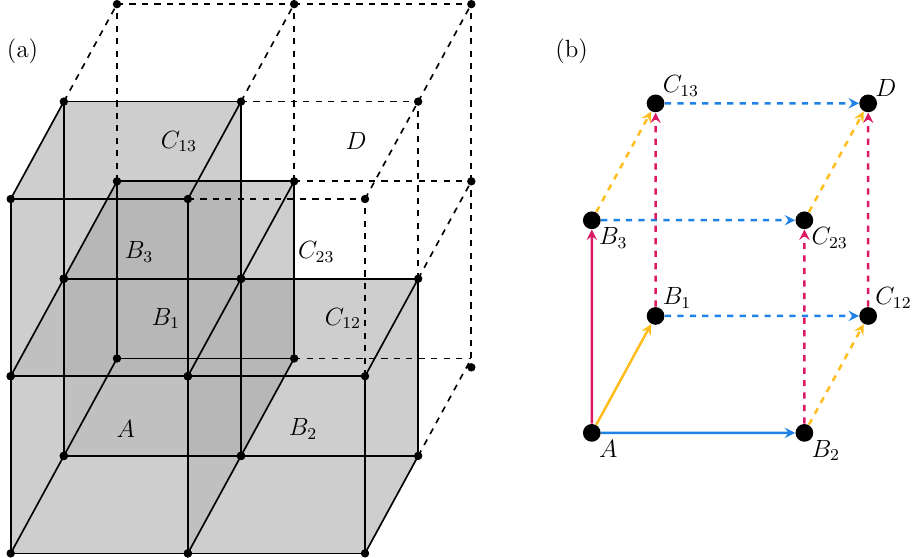}
	\end{center}
	\captionsetup{singlelinecheck=off}
	\caption[.]{\begin{enumerate}[label=(\alph*)]
			\item The Unique Common Extension Property says that, given four adjacent $k$-cubes $A,B_1,B_2,B_3$ arranged as above (grey), we can \textit{uniquely} find four more $k$-cubes $C_{12},C_{13},C_{23},D$ (dashed) to complete a $2 \times 2 \times 2$ arrangement.
			\item In the second figure, we assign each cube from \ref{fig:ufp}(a) a vertex, and draw a directed arrow of colour $i$ between two vertices $X,Y$ whenever $M_i(X,Y) = 1$, that is, if $Y$ is $E_i$-adjacent to $X$. Then, given three arrows originating at $A$, we can find unique arrows (dashed) to complete the commuting cube diagram.
		\end{enumerate}
	}\label{fig:ufp}
\end{figure}

\begin{defn}\label{def:ufp}
	Let $k \geq 3$, and let $\Gamma$ be a $k$-cube group with adjacency structure $E_1,\ldots E_k$ and adjacency matrices $M_1,\ldots , M_k$. Let $A,B_p,B_q,B_r$ be pointed $k$-cubes in $\mathcal{S}_k(\Gamma)$ such that $M_p(A,B_p) = M_q(A,B_q) = M_r(A,B_r) = 1$ for some $p,q,r$. We say that the matrices $M_i$ satisfy the \textbf{Unique Common Extension Property} if we can find \textit{unique} $k$-cubes $C_{pq}, C_{pr}, C_{qr}, D \in \mathcal{S}_k$ such that each of
	\[
	M_p(B_q,C_{pq}),\, M_p(B_r,C_{pr}),\, M_q(B_p,C_{pq}),\, M_q(B_r,C_{pr}),\, M_r(B_p,C_{pr}),\, M_r(B_q,C_{qr}),
	\]
	and each of
	\[
	M_p(C_{qr},D),\, M_q(C_{pr},D),\, M_r(C_{pq},D)
	\]
	is equal to $1$ (Figure \ref{fig:ufp}(a)). In the case where $k=2$, let $A,B,C \in \mathcal{S}_2$ be such that $M_1(A,B) = M_2(A,C) = 1$. Then $M_1,M_2$ satisfy the Unique Common Extension Property if there exists a unique $D \in \mathcal{S}_2$ such that $M_2(B,D) = M_1(C,D) = 1$.
\end{defn}

Referring to the example in Figure \ref{fig:adjacency}, the Unique Common Extension Property would suggest the existence of a unique cube $D$ which is simultaneously $E_2$-adjacent to $B$, and $E_3$-adjacent to $C$.

\begin{rem}\label{rem:ufp}\text{}
	\begin{enumerate}[label=(\roman*)]
		\item One might notice that the definition for the Unique Common Extension Property could be extended to deal with $B_1,\ldots , B_k \in \mathcal{S}_k$, such that each $B_i$ is $E_i$-adjacent to $A$. It turns out by Lemma 1.4 in \cite{RobSte1999}, however, that having unique common extensions given three $k$-cubes $B_p,B_q,B_r$ initially adjacent to $A$ as above, is enough to imply unique common extensions for any number of initial $B_i$.
		
		\item The Unique Common Extension Property is formulated slightly differently to the factorisation property of \textit{rank-$k$ graphs} (c.f. Definition \ref{def:k-graph}, and \cite[1.1]{KumPas2000}). By property \textbf{C2}, any two adjacent sides of a square in the complex $\mathcal{M}(\Gamma)$ uniquely define the square. Then, any three adjacent and mutually perpendicular edges uniquely determine a cube.
		
		The link at each vertex of $\mathcal{M}(\Gamma)$ is a clique complex of a complete $k$-partite graph, so given $k$-cubes $A$ and $D$ arranged as in Figure \ref{fig:ufp}, then the remaining cubes $B_i$, $C_{ij}$ are determined by $A$ and $D$ (Figure \ref{fig:uce}).
	\end{enumerate}
	
	\begin{figure}[h]
		\begin{center}
			\includegraphics[scale=0.75]{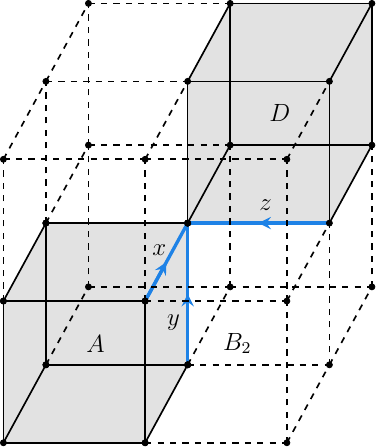}
		\end{center}
		\captionsetup{singlelinecheck=off}
		\caption[.]{Given cubes $A$ and $D$ arranged as above, then $B_2$ is uniquely-determined by edges $x,y,z$. In turn, each of the remaining cubes in the diagram are determined. This is equivalent to the associativity property of rank-$k$ graphs.
		}\label{fig:uce}
	\end{figure}	
	
\end{rem}

\begin{prop}\label{prop:ufp}
	Let $\mathcal{M}$ be the $k$-dimensional cube complex with adjacency structure $E_1,\ldots , E_k$, as constructed in Definition \ref{def:cube_complex}. If we regard $\mathcal{M}$ as a $k$-cube group, then its adjacency matrices $M_1,\ldots , M_k$ commute, and satisfy the Unique Common Extension Property.
\end{prop}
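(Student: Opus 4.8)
The plan is to recast the statement in the geometry of the affine building $\Delta$ and then read off everything from its product structure. By Proposition~\ref{prop:prod_of_trees} (equivalently Proposition~\ref{prop:clique}) the $k$-cube group $\Gamma$ acts freely and transitively on the vertex set of $\Delta = T_1 \times \cdots \times T_k$, and, as in Definition~\ref{def:adjacency_functions}, we identify the finite set $\mathcal{S}_k(\Gamma)$ with the set of pointed, oriented chambers of $\Delta$; each chamber is a product $e_1 \times \cdots \times e_k$ of one oriented edge $e_i$ of $T_i$ from each factor. The first and most laborious step is to translate Definition~\ref{def:adjacency_functions} into this language: the claim to establish is that, for $A = e_1 \times \cdots \times e_k$ and $B = f_1 \times \cdots \times f_k$, one has $M_i(A,B)=1$ precisely when $f_l = e_l$ for every $l \neq i$ while in $T_i$ the edge $f_i$ issues from $t(e_i)$ with $f_i \neq \bar e_i$. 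Indeed, conditions (i)--(iii) in the definition of $M_i$ (and (i) for $M_p$) say exactly that the codimension-one face of $B$ transverse to direction $i$ ``at the near end'' is the same cell of $\Delta$ as the corresponding face of $A$ ``at the far end''; since such a face spans every direction other than $i$ it pins down $f_l = e_l$ for $l \neq i$ and forces $o(f_i)=t(e_i)$, while the non-degeneracy clauses (condition (ii) for $M_p$ and condition (iv) for $M_i$) are precisely the requirement $f_i \neq \bar e_i$, i.e.\ that $B$ is not $A$ reflected in the shared face. The hard part will be making this dictionary precise: this means tracking the basepoints and orientations encoded by the subscripts $L$ in the $\square$-notation of Definition~\ref{def:BM} and checking that each geometric chamber adjacent to $A$ in a given direction carries exactly one pointing-and-orientation realising the corresponding entry of $M_i$. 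Once this is in place the remaining arguments are formal.

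Granting the dictionary, commutativity is immediate. For chambers $A = e_1 \times \cdots \times e_k$ and $D = d_1 \times \cdots \times d_k$, the entry $(M_iM_j)(A,D)$ counts the chambers $C$ with $M_i(A,C)=M_j(C,D)=1$; by the dictionary such a $C$ must agree with $A$ outside coordinate $i$ and with $D$ outside coordinate $j$, hence $C = e_1 \times \cdots \times d_i \times \cdots \times e_k$, which exists as a (unique) chamber of $\Delta$ exactly when $d_l = e_l$ for $l \notin \{i,j\}$, $o(d_i)=t(e_i)$, $o(d_j)=t(e_j)$, $d_i \neq \bar e_i$ and $d_j \neq \bar e_j$. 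This description of the pairs $(A,D)$ with $(M_iM_j)(A,D)=1$ is symmetric in $i$ and $j$, so $M_iM_j = M_jM_i$ entrywise and both products are $\{0,1\}$-matrices. The only geometric input used is that $T_i$ has no loops, so $t(e_i)\neq o(e_i)$ and $C$, $D$ genuinely differ from $A$ in the expected coordinates.

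For the Unique Common Extension Property I would take $A = e_1 \times \cdots \times e_k$ and $B_p, B_q, B_r$ with $M_p(A,B_p)=M_q(A,B_q)=M_r(A,B_r)=1$, so by the dictionary $B_s$ is $A$ with $e_s$ replaced by an edge $g_s$ satisfying $o(g_s)=t(e_s)$ and $g_s\neq \bar e_s$, for $s \in \{p,q,r\}$. The six adjacency conditions of Definition~\ref{def:ufp} on $C_{pq},C_{pr},C_{qr}$ force each of these chambers to be $A$ with the two relevant coordinates replaced by the corresponding $g$'s (for instance $C_{pq}$ has $g_p,g_q$ in positions $p,q$ and $e_l$ elsewhere); one then checks that these chambers do satisfy every required adjacency and are the only ones that do, and that the three conditions on $D$ force $D$ to be $A$ with coordinates $p,q,r$ replaced by $g_p,g_q,g_r$, again uniquely, and that this $D$ works. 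Every chamber produced this way lies in $\mathcal{S}_k$ because every pointing-and-orientation of every chamber of $\Delta$ does. For the base case $k=2$ the assertion reduces to the classical fact that two adjacent sides of a square in a complex with a $\mathrm{VH}$-structure determine the square — this is condition \textbf{C2}, as treated in \cite{KimRob2002}.

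Finally, I note that commutativity and the extension property could instead be deduced directly from Proposition~\ref{prop:clique}: completing the $2\times2\times2$ block is a condition local to a single vertex of $\mathcal{M}(\Gamma)$, whose link is the clique complex of a complete $k$-partite graph, so any partial corner of the block extends to it uniquely (cf.\ Remark~\ref{rem:ufp}); but either way the real content is the translation in the first paragraph, and the product-coordinate formulation is what keeps the uniqueness clauses transparent.
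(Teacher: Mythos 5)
Your argument is, at bottom, the paper's own proof transported into the product coordinates of the building rather than a genuinely different route. The paper works directly with the $\square$-labels: it observes that $M_i(A,B)=1$ forces the labels of $B$ to be the (inverted) labels of $A$ with every superscript $L$ replaced by $L\cup\lbrace i\rbrace$, so that performing two such shifts in either order produces the same cube $C_{12}=C_{21}$ (whence commutativity), and the analogous formula in $L\cup\lbrace 1,2,3\rbrace$ exhibits the unique common extension $D$. Your dictionary ``$M_i(A,B)=1$ iff $f_l=e_l$ for $l\neq i$, $o(f_i)=t(e_i)$ and $f_i\neq \bar e_i$'' is exactly this observation read off in $T_1\times\cdots\times T_k$, and once it is granted, your commutativity and extension arguments are correct and arguably cleaner: the coordinate description makes the uniqueness of $C_{ij}$ and $D$ and the reduction of the $k=2$ case to \textbf{C2} transparent, and it also supplies the existence of $D$ in $\mathcal{S}_k$ for free (every tuple of edges is a chamber of $\Delta$). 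The one caveat is that you explicitly defer the verification of the dictionary itself, and that verification --- unwinding conditions (i)--(iv) of Definition \ref{def:adjacency_functions}, including the swap of the $v$- and $w$-labels and the inversions, against the face structure of a pointed chamber --- is precisely the content the paper's proof carries out via its explicit label formulas. So the two proofs differ mainly in where the bookkeeping is placed, not in substance; to be complete, yours needs that paragraph of label-chasing actually written out. Your closing observation that everything could instead be read off the link condition of Proposition \ref{prop:clique} is essentially the paper's Remark \ref{rem:ufp}(ii).
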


\begin{proof}
	If $k=2$, this is Lemma 4.1 from \cite{KimRob2002}.
	
	Suppose, then, that $k \geq 3$. Fix $p \in \lbrace 1,\ldots , k\rbrace$ and let $A := \square(u,v_J,u^J,w_J) \in \mathcal{S}_k$, for $J \subseteq (\lbrace 1,\ldots , k\rbrace \setminus \lbrace p \rbrace)$. Define three more $k$-cubes $B_1 := \square(a,b_J,a^J,c_J)$, $B_2 := \square(r,s_J,r^J,t_J)$, $B_3 := \square(x,y_J,x^J,z_J) \in \mathcal{S}_k$ and suppose, without loss of generality, that $M_i(A,B_i) = 1$ for each $i \in \lbrace 1,2,3 \rbrace$, and that $p \notin \lbrace 1,2,3 \rbrace$ (these are purely for notational convenience: the proof is identical for any three distinct $i \in \lbrace 1,\ldots , k\rbrace$, or if some $i=p$).
	
	Then by Definition \ref{def:adjacency_functions}, we have $(a^L)\inv = u^{L \cup \lbrace 1 \rbrace}$, $(b_j^L)\inv = w_j^{L \cup \lbrace 1 \rbrace}$, $(c_j^L)\inv = v_j^{L \cup \lbrace 1 \rbrace}$, and so on, for all compatible $L$ and $j \in J$.
	
	By Lemma \ref{lem:Mi}, we can find a $k$-cube $C_{21} \in \mathcal{S}_k$ such that $M_2(B_1,C_{21}) = 1$. Then
	\[
	C_{21} = \square\Big(u^{L \cup \lbrace 1 \rbrace \cup \lbrace 2 \rbrace}, v_J^{L \cup \lbrace 1 \rbrace \cup \lbrace 2 \rbrace}, u^{L \cup \lbrace 1 \rbrace \cup \lbrace 2 \rbrace \cup \lbrace j \rbrace}, w_J^{L \cup \lbrace 1 \rbrace \cup \lbrace 2 \rbrace} \Big).
	\]
	Similarly, we can find some $C_{12} \in \mathcal{S}_k$ such that $M_1(B_1,C_{12}) = 1$. But this can be seen to equal $C_{21}$, and so the matrices $M_1, M_2$ commute. Indeed, we can show in an identical manner that all of the matrices $M_1,\ldots , M_k$ commute.
	
	Finally, consider a $k$-cube $D \in \mathcal{S}_k$ such that $M_3(C_{21},D) = 1$; such a $k$-cube exists which satisfies Definition \ref{def:adjacency_functions}(iv) by Lemma \ref{lem:Mi}. Then
	\[
	D = \square\Big( \big( u^{L \cup \lbrace 1,2,3 \rbrace}\big)\inv, \Big( v_j^{L \cup \lbrace 1,2,3 \rbrace}\Big)\inv, \Big(u^{L \cup \lbrace 1,2,3 \rbrace \cup \lbrace j \rbrace}\Big)\inv, \Big(w_j^{L \cup \lbrace 1,2,3 \rbrace}\Big)\inv \Big),
	\]
	and it is clear that $D$ is also the unique $k$-cube such that $M_1(C_{32},D) = M_2(C_{31},D) = 1$.
\end{proof}

%%%%%%%%%%%%%%%%%%%%%%%%%%%%%%%%%%%%%%%%%%%%%%%%%%%
\section{Higher-rank graphs}\label{S:higher-rank}
%%%%%%%%%%%%%%%%%%%%%%%%%%%%%%%%%%%%%%%%%%%%%%%%%%%

\begin{defn}\label{def:k-graph}
	Let $\Lambda$ be a category such that $\Ob(\Lambda)$ and $\Hom(\Lambda)$ are countable sets (that is, a \textit{countable small category}), and identify $\Ob(\Lambda)$ with the identity morphisms in $\Hom(\Lambda)$. For a morphism $\lambda \in \Hom_\Lambda(u,v)$, we define range and source maps $r(\lambda) := v$ and $s(\lambda) := u$ respectively.
	
	Let $d : \Lambda \rightarrow \mathbb{N}^k$ be a functor, called the \textbf{degree map}, and let $\lambda \in \Hom(\Lambda)$. We call the pair $(\Lambda, d)$ a \textbf{rank-$k$ graph} (or simply a $k$\textbf{-graph}) if, whenever $d(\lambda) = \mathbf{m} + \mathbf{n}$ for some $\mathbf{m},\mathbf{n} \in \mathbb{N}^k$, we can find \textit{unique} elements $\mu, \nu \in \Hom(\Lambda)$ such that $\lambda = \nu \mu$, and $d(\mu)=\mathbf{m}$, $d(\nu) = \mathbf{n}$. Note that for $\mu$, $\nu$ to be composable, we must have $r(\mu) = s(\nu)$.
	
	For $\mathbf{n} \in \mathbb{N}^k$, we write $\Lambda^\mathbf{n} := d\inv (\mathbf{n})$; by the above property, we have that $\Lambda^\mathbf{0} = \Ob(\Lambda)$, and we call the elements of $\Lambda^\mathbf{0}$ the \textbf{vertices} of $(\Lambda, d)$ \cite{KumPas2000}.
	
	Let $(\Lambda,d)$ be a rank-$k$ graph, let $\mathbf{n} \in \mathbb{N}^k$, and let $v \in \Lambda^\mathbf{0}$. Write $v\Lambda^\mathbf{n}$ for the set of morphisms in $\Lambda^\mathbf{n}$ which map onto the vertex $v$, that is, $v\Lambda^\mathbf{n} := \lbrace \lambda \in \Lambda^\mathbf{n} \mid r(\lambda) = v\rbrace$. We say that $(\Lambda,d)$ is \textbf{row-finite} if each set $v\Lambda^\mathbf{n}$ is finite, and that $(\Lambda,d)$ has \textbf{no sources} if each $v\Lambda^\mathbf{n}$ is non-empty.
\end{defn}

\begin{rem}\label{rem:directed_graphs}
	If $E$ is a directed graph on $n$ vertices, we can construct an $n \times n$ incidence matrix $M_E(i,j)$ with $ij$-th entry $1$ if there is an edge from $i$ to $j$, and $0$ otherwise.
	
	If $E_1,\ldots , E_k$ are directed graphs with the same vertex set, and such that their associated incidence matrices $M_1,\ldots ,M_k$ commute and satisfy the Unique Common Extension Property, then we can construct a rank-$k$ graph out of the graphs $E_i$, as in \cite{HazRaeSimWeb2013}.
\end{rem}

\begin{thm}\label{thm:k-graph}
	Let $\Gamma$ be a $k$-cube group with adjacency structure $E_1,\ldots , E_k$. Then $\Gamma$ induces a row-finite rank-$k$ graph $\mathcal{G}(\Gamma)$ with no sources.
\end{thm}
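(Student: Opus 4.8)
The strategy is to build the category $\mathcal{G}(\Gamma)$ by hand out of the combinatorial data of the $k$-cube group and then verify the defining axioms of a $k$-rank graph one at a time. First I would set $\Ob(\mathcal{G}(\Gamma))$ to be a single vertex (the single vertex of the cube complex $\mathcal{M}(\Gamma)$), so $\mathcal{G}(\Gamma)^{\mathbf{0}}$ is a singleton. For the morphisms of degree $\mathbf{e}_i$ I would take the set $E_i$ of oriented edges in the $E_i$-direction; more generally, for a degree $\mathbf{n} = (n_1,\ldots,n_k) \in \mathbb{N}^k$, a morphism of degree $\mathbf{n}$ should be a labelled $\mathbf{n}$-block, i.e. an $(n_1 \times \cdots \times n_k)$-grid of $k$-cubes (elements of $\mathcal{S}_k$) that fit together consistently according to the adjacency matrices $M_1,\ldots,M_k$ in each of the $k$ directions. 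Equivalently, following Remark \ref{rem:directed_graphs} and \cite{HazRaeSimWeb2013}, I would invoke the standard construction that turns $k$ commuting $\{0,1\}$-matrices on a common (here one-element, but it is cleaner to allow the vertex set of $\mathcal{S}_k$) vertex set into a $k$-graph. Concretely, the cleanest route is: let the vertex set be $\mathcal{S}_k(\Gamma)$, let $M_1,\ldots,M_k$ be the adjacency matrices of Definition \ref{def:adjacency_functions}, note by Proposition \ref{prop:ufp} that they commute, and apply the construction of \cite{HazRaeSimWeb2013} to produce a $k$-graph whose degree-$\mathbf{e}_i$ edges are exactly the pairs $(A,B)$ with $M_i(A,B)=1$.

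The substantive content — and the step where the earlier results of the paper are really needed — is checking the \emph{unique factorisation property}: if $\lambda$ has degree $\mathbf{m}+\mathbf{n}$, then $\lambda$ factors uniquely as $\nu\mu$ with $d(\mu)=\mathbf{m}$, $d(\nu)=\mathbf{n}$. Splitting $\mathbf{m}$ and $\mathbf{n}$ into single coordinate steps, this reduces to the following two assertions: (1) $M_iM_j = M_jM_i$ for all $i,j$, which is exactly the commutativity half of Proposition \ref{prop:ufp}; and (2) whenever one has a length-two path using colours $i$ then $j$ through a $k$-cube, the "rectangle" it bounds is uniquely filled, and the two orders of traversal biject — this is precisely the Unique Common Extension Property of Proposition \ref{prop:ufp} (in the $k=2$ sub-case, literally Definition \ref{def:ufp}'s square-filling statement, and for general $k$ the three-dimensional commutativity plus Lemma 1.4 of \cite{RobSte1999} cited in Remark \ref{rem:ufp}(i), which promotes it to all dimensions $\le k$). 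I would assemble these into the functoriality of $d$ and the unique-factorisation axiom by an induction on $d(\mu)+d(\nu)$, reducing each step to a single application of the commuting-square property, exactly as in the $k=2$ arguments of \cite{KimRob2002} and the general machinery of \cite{HazRaeSimWeb2013}.

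Finally I would verify row-finiteness and no sources. Row-finiteness is immediate: each $\Lambda^{\mathbf{e}_i}(v)$ is a subset of a row of the $\{0,1\}$-matrix $M_i$, and since $\mathcal{S}_k(\Gamma)$ is finite (there are only finitely many $k$-cubes, as $\Gamma$ is generated by finite sets $E_1,\ldots,E_k$), every $\Lambda^{\mathbf{n}}(v)$ is finite. The no-sources property follows from Lemma \ref{lem:Mi}: every row of every $M_i$ has at least one (indeed at least three) non-zero entry, so $\Lambda^{\mathbf{e}_i}(v) \neq \emptyset$ for every vertex $v$ and every $i$; composing such edges and using the factorisation property gives $\Lambda^{\mathbf{n}}(v)\neq\emptyset$ for all $\mathbf{n}\in\mathbb{N}^k$.

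The main obstacle is purely bookkeeping: spelling out the general-$k$ unique factorisation rigorously requires carefully tracking how a multi-index splitting corresponds to a sequence of elementary square-completions, and confirming that the two classical inputs — the $3$-dimensional commutativity of Proposition \ref{prop:ufp} and Robertson–Steger's promotion Lemma — together cover every case that arises. Everything else (the single vertex, the degree functor, finiteness, no sources) is routine once the factorisation property is in hand.
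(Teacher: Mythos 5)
Your ``cleanest route'' is essentially the paper's own proof: take $\mathcal{S}_k(\Gamma)$ as the common vertex set, use Lemma \ref{lem:Mi} for non-vanishing of the matrices and finiteness, and Proposition \ref{prop:ufp} for commutativity and the unique common extensions that yield the factorisation property, then invoke the standard machinery turning $k$ commuting $\lbrace 0,1\rbrace$-matrices into a $k$-graph (the paper cites Fowler--Sims and Robertson--Steger where you cite \cite{HazRaeSimWeb2013}; these play the same role). One caution: your opening suggestion of a single-vertex category whose degree-$\mathbf{e}_i$ morphisms are the edges $E_i$ describes a \emph{different} object --- essentially the cube complex $\mathcal{M}(\Gamma)$ itself, which the paper explicitly warns is not $\mathcal{G}(\Gamma)$ --- so only your second construction, with $|\mathcal{S}_k(\Gamma)|$ vertices, is the one whose K-theory the rest of the paper computes.
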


\begin{rem}
	To the reader who has not come across higher-rank graphs in the past, it may seem counter-intuitive that a rank-$k$ graph be a countably-infinite category, while a $k$-cube group $\Gamma$ comprises finite data.
	
	The adjacency structure on $\Gamma$ induces a family $\mathcal{E}(\Gamma)$ of distinctly-coloured directed graphs $\mathcal{E}_1,\ldots , \mathcal{E}_k$ on the same vertices. The shared set of vertices is the set of $k$-cubes of $\Gamma$, and the incidence matrices are given by $M_1, \ldots , M_k$, respectively. It is imperative to stress that this is not the same as the cube complex $\mathcal{M}(\Gamma)$ from Example \ref{ex:M}. In \cite{EvaSim2012}, the collection of graphs $\mathcal{E}(\Gamma)$ is called a $1$-\textit{skeleton}.
	
	To view $\mathcal{E}(\Gamma)$ as a rank-$k$ graph, we must consider the \textit{$k$-dimensional paths $\mathcal{G}(\Gamma)$} of $\mathcal{E}(\Gamma)$. If $M_i M_j(v,w) = M_j M_i (v,w) = 1$ for some vertices $v,w \in \mathcal{E}(\Gamma)$, then there is a unique $ij$-coloured path, and a unique $ji$-coloured path, from $v$ to $w$. We then identify these paths and the quotient $\mathcal{G}(\Gamma)$ of the path category $\mathcal{E}(\Gamma)^*$ by this relation is then a $k$-graph, as long as the Unique Common Extension Property is also satisfied (c.f. \cite{HazRaeSimWeb2013}).
	
	The degree $d(\lambda)$ (called the \textit{shape} in \cite{RobSte1999}) of a $k$-dimensional path $\lambda$ is a tuple whose $i$-th entry is the total length of $\lambda$ restricted to colour $i$, that is, to edges labelled by elements of $\mathcal{E}_i$. Together, the pair $(\mathcal{G}(\Gamma),d)$ is a rank-$k$ graph, since it satisfies the factorisation property of Definition 1.1 in \cite{KumPas2000} (consult Figure \ref{fig:uce} for an illustration). We prove Theorem \ref{thm:k-graph} like so:
\end{rem}

\begin{proof}[Proof of Theorem \ref{thm:k-graph}]
	Let $\mathcal{G}(\Gamma)$ be a collection of directed graphs $\mathcal{E}_1,\ldots , \mathcal{E}_k$, each on the set of vertices $\mathcal{S}_k(\Gamma)$, and with incidence matrices $M_1,\ldots , M_k$ respectively. From Remark 2.3 in \cite{FowSim2001} and \cite[\S 1]{RobSte1999}, it is sufficient that, for each $i,j,l \in \lbrace 1,\ldots , k\rbrace$ with $i < j < l$:
	\begin{enumerate}[label=(\roman*)]
		\item $M_i$ is non-zero,
		\item $M_i M_j = M_j M_i$, 
		\item Each of $M_i$, $M_i M_j$, and $M_i M_j M_l$ has entries in $\lbrace 0,1\rbrace$.
	\end{enumerate}
	But each $M_i$ is non-zero by Lemma \ref{lem:Mi}, and the matrices are finite-dimensional, commute, and satisfy the Unique Common Extension Property by Proposition \ref{prop:ufp}. Hence $\mathcal{G}(\Gamma)$ can be regarded as a rank-$k$ graph.
\end{proof}

\begin{rem}	
	Robertson and Steger in \cite{RobSte1999}, \cite{RobSte2001} initially considered those rank-$2$ graphs whose incidence matrices have entries in $\lbrace 0,1 \rbrace$ (though they did not name them as such). Then in \cite[Theorem 3.5, Corollary 3.10]{APS2006}, using the dual graph construction, the $\cst$-algebra of any $k$-graph $\Lambda$ is isomorphic to the $\cst$-algebra of a $k$-graph $p \cdot \Lambda$ whose co-ordinate matrices all have entries in $\lbrace 0,1 \rbrace$, and where $p = (1, \ldots , 1) \in \mathbb{N}^k$. Our $k$-cube groups induce the another example of rank-$k$ graphs with $\lbrace 0,1 \rbrace$-incidence-matrices for arbitrary $k \geq 2$.
\end{rem}

We associate a $\cst$-algebra to a rank-$k$ graph as follows:

\begin{defn}\label{def:graph_algebra}
	Let $\Lambda = (\Lambda,d)$ be a row-finite rank-$k$ graph with no sources. We define the \textbf{rank-$k$ graph $\cst$-algebra} $\mathcal{A}(\Lambda)$ to be the universal $\cst$-algebra generated by a family $\lbrace s_\lambda \mid \lambda \in \Lambda\rbrace$ of \textit{partial isometries} (that is, operators $s_\lambda$ whose restriction to $(\ker s_\lambda)^\perp$ are isometries) which have the following properties:
	\begin{enumerate}[label=(\roman*)]
		\item The set $\big\lbrace s_v \mid v \in \Lambda^\mathbf{0}\big\rbrace$ satisfies $(s_v)^2 = s_v = s_v^*$ and $s_u s_v = 0$ for all $u \neq v$.
		\item If $r(\lambda) = s(\mu)$ for some $\lambda, \mu \in \Lambda$, then $s_{\mu\lambda} = s_\mu s_\lambda$.
		\item For all $\lambda \in \Lambda$, we have $s_\lambda^* s_\lambda = s_{s(\lambda)}$.
		\item For all vertices $v \in \Lambda^\mathbf{0}$ and $\mathbf{n} \in \mathbb{N}^k$, we have:
		\[
		s_v = \sum_{v\lambda \in \Lambda^\mathbf{n}} s_\lambda s_\lambda^*.
		\]
	\end{enumerate}
	Note that without the row-finiteness condition, property (iv) is not well-defined.
\end{defn}

%%%%%%%%%%%%%%%%%%%%%%%%%%%%%%%%%%%%%%%%%%%%%%%%%%%
\section{Spectral sequences and K-theory}\label{S:spectral}
%%%%%%%%%%%%%%%%%%%%%%%%%%%%%%%%%%%%%%%%%%%%%%%%%%%

We make extensive use of Theorem 3.15 from \cite{Eva2008}, displayed here as Theorem \ref{thm:gwion}; in the examples presented in this paper, we principally consider the special cases where $k=3$ or $k=4$. For $k=3$, we make use of the relevant work of \cite{Eva2008}, and we derive analogous results for $k=4$ and $k=5$ in Propositions \ref{prop:4-rank_ss}, $\ref{prop:5-rank_ss}$ and Corollary \ref{cor:4-rank}.

The proofs make use of so-called \textit{spectral sequences}, generalisations of chain complexes; we direct the unfamiliar reader to \cite{Mcc2000} for more detailed background information, but offer an overview here.

\begin{defn}
	Let $\mathcal{C}$ be an Abelian category. A \textbf{spectral sequence (of homological type)} consists of a family $\lbrace (E^r, d^r)\rbrace$ of bigraded objects 
	\[
	E^r := \bigoplus_{p,q \in \mathbb{Z}} E_{p,q}^r
	\]
	in $\mathcal{C}$, and maps
	\[
	d^r : E_{p,q}^r \longrightarrow E_{p-r, q+r-1}^r, \quad\text{and}\quad d^r : E_{p+r, q-r+1}^r \longrightarrow E_{p,q}^r,
	\]
	called \textbf{differentials}, which are of bidegree $(-r,r-1)$, and which satisfy $d^r \circ d^r = 0$. We insist that
	\[
	E_{p,q}^{r+1} \cong H(E_{p,q}^r) := \frac{\ker\big(d^r : E_{p,q}^r \longrightarrow E_{p-r,q+r-1}^r \big)}{\im\big(d^r : E_{p+r,q-r+1}^r \longrightarrow E_{p,q}^r \big)}.
	\]
	The collections $(E_{p,q}^r)$ for fixed $r$ are known as the \textbf{sheets} of the spectral sequence. We move to the next sheet by taking the homology $H$, defined above. We call a spectral sequence \textbf{bounded} if the sequence of objects $E_{p,q}^r$  stabilises as $r \rightarrow \infty$; we denote this limit by $E_{p,q}^\infty$, and call it the \textbf{stable value}.
	
	We say that a bounded spectral sequence \textbf{converges} to a family of $\mathbb{Z}$-modules $\lbrace \mathcal{K}_* \rbrace$ if there exists a finite ascending filtration of modules
	\begin{equation} \label{eq:filtration}
		0 = F_s(\mathcal{K}_*) \subseteq \cdots \subseteq F_{p-1}(\mathcal{K}_*) \subseteq F_{p}(\mathcal{K}_*) \subseteq F_{p+1}(\mathcal{K}_*) \subseteq \cdots \subseteq F_t(\mathcal{K}_*) = \mathcal{K}_*,
	\end{equation}
	and an isomorphism
	\begin{equation} \label{eq:iso}
		E_{p,q}^\infty \cong F_p(\mathcal{K}_{p+q}) / F_{p-1}(\mathcal{K}_{p+q}),
	\end{equation}
	for every pair $(p,q)$.
\end{defn}

Given a general chain complex $A := \cdots \rightarrow A_{i+1} \overset{\del_{i+1}}{\longrightarrow} A_i \overset{\del_i}{\longrightarrow} A_{i-1} \rightarrow \cdots$, we frequently write $H_i(A)$ to denote the $i$-th homology $\ker(\del_i)/\im(\del_{i+1})$.

\begin{thm}[Evans 2008]\label{thm:gwion}
	Define the sets
	\[
	N_l := \begin{cases}
		\big\lbrace \bmu := (\mu_1 , \ldots , \mu_l) \in \lbrace 1, \ldots , k\rbrace^l \mid \mu_1 < \cdots < \mu_l \big\rbrace &\text{if } l \in \lbrace 1,\ldots , k\rbrace, \\
		\lbrace * \rbrace & \text{if } l=0, \\
		\emptyset & \text{otherwise},
	\end{cases}
	\]	
	and for $l \in \lbrace 1, \ldots , k \rbrace$ and $\bmu \in N_l$, define
	\[
	\bmu^i := \begin{cases}
		(\mu_1, \ldots , \hat{\mu}_i , \ldots \mu_l ) \in N_{l-1} & \text{if } l > 1, \\
		* & \text{if } l = 1.
	\end{cases}
	\]
	Let $\Lambda$ be a row-finite $k$-graph with no sources. Then there exists a spectral sequence $\lbrace (E^r, d^r)\rbrace$ converging to $\mathcal{K}_* (\mathcal{A}(\Lambda))$ with $E_{p,q}^\infty \cong E_{p,q}^{k+1}$, and
	\[
	E_{p,q}^2 \cong \begin{cases}
		H_p(\mathcal{D}_k) & \text{if } p \in \lbrace 0,1,\ldots , k\rbrace \text{ and } q \text{ is even,} \\
		0 & \text{otherwise,}
	\end{cases}
	\]
	where $\mathcal{D}_k$ is the chain complex with
	\[
	(\mathcal{D}_k)_p := \begin{cases}
		\bigoplus_{\bmu \in N_p} \mathbb{Z}\Lambda^\mathbf{0} & \text{if } p \in \lbrace 0,1,\ldots , k\rbrace , \\
		0 & \text{otherwise,}
	\end{cases}
	\]
	and whose differentials $\del_p : (\mathcal{D}_k)_p \longrightarrow (\mathcal{D}_k)_{p-1}$ are defined as
	\[
	\bigoplus_{\bmu \in N_p} m_{\bmu} \longmapsto \bigoplus_{\lambda\in N_{p-1}} \sum_{\bmu \in N_p} \sum_{i=1}^p (-1)^{i+1} \delta_{\lambda,\bmu^i} (I-M_{\mu_i}^T)m_{\bmu},
	\]
	for $p \in \lbrace 1,\ldots , k\rbrace$.\qed
\end{thm}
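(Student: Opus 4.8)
The statement is quoted from \cite{Eva2008}, so I only sketch the route I would take to establish it. The plan is to reduce the computation of $K_*(\mathcal{A}(\Lambda))$ to a crossed product by $\mathbb{Z}^k$ and then run the naturally associated spectral sequence. First I would pass to the skew-product $k$-graph $\Lambda\times_d\mathbb{Z}^k$, which has vertex set $\Lambda^0\times\mathbb{Z}^k$ and an evident translation action of $\mathbb{Z}^k$; by Kumjian and Pask's analysis of the gauge action one has $\mathcal{A}(\Lambda\times_d\mathbb{Z}^k)\cong\mathcal{A}(\Lambda)\rtimes_\gamma\mathbb{T}^k$. Because the skew-product $k$-graph has no cycles, $\mathcal{F}:=\mathcal{A}(\Lambda\times_d\mathbb{Z}^k)$ is AF, and Takai duality for the $\mathbb{T}^k$-action gives $\mathcal{F}\rtimes\mathbb{Z}^k\cong\mathcal{A}(\Lambda)\otimes\mathcal{K}$. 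Thus it is enough to compute the K-theory of $\mathcal{F}\rtimes\mathbb{Z}^k$.

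Next I would identify the $\mathbb{Z}[\mathbb{Z}^k]$-module $K_*(\mathcal{F})$. Since $\mathcal{F}$ is AF we have $K_1(\mathcal{F})=0$, while $K_0(\mathcal{F})$ is a direct limit of copies of $\mathbb{Z}\Lambda^0$ along the transposed vertex matrices; tracking the $\mathbb{Z}^k$-translation on the skew product shows $K_0(\mathcal{F})\cong\mathbb{Z}[t_1^{\pm1},\dots,t_k^{\pm1}]\otimes_{\mathbb{Z}[t_1,\dots,t_k]}\mathbb{Z}\Lambda^0$, where $t_i$ acts on $\mathbb{Z}\Lambda^0$ as $M_i^T$ (the localization records precisely that $K_0(\mathcal{F})$ is this stationary limit and that the $t_i$ become invertible after the crossed product). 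The iterated Pimsner--Voiculescu sequence --- equivalently the Kasparov spectral sequence for a crossed product by $\mathbb{Z}^k$ --- then produces a homologically graded spectral sequence converging to $K_*(\mathcal{F}\rtimes\mathbb{Z}^k)=K_*(\mathcal{A}(\Lambda))$ with $E^2_{p,q}\cong H_p(\mathbb{Z}^k;K_q(\mathcal{F}))$; since $K_1(\mathcal{F})=0$ this collapses to $E^2_{p,q}\cong H_p(\mathbb{Z}^k;K_0(\mathcal{F}))$ for $q$ even and $0$ for $q$ odd.

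It then remains to compute the group homology, for which I would use the Koszul resolution of the trivial module over $\mathbb{Z}[\mathbb{Z}^k]=\mathbb{Z}[t_1^{\pm1},\dots,t_k^{\pm1}]$, namely the exterior complex $\Lambda^\bullet(\mathbb{Z}^k)\otimes_{\mathbb{Z}}\mathbb{Z}[\mathbb{Z}^k]$ with differential assembled from the operators $t_i-1$. Tensoring over $\mathbb{Z}[\mathbb{Z}^k]$ with $K_0(\mathcal{F})$ and using flatness of the localization to replace it by $\mathbb{Z}\Lambda^0$ with $t_i\mapsto M_i^T$, one finds that $H_p(\mathbb{Z}^k;K_0(\mathcal{F}))$ is the homology of the complex whose $p$-th term is $\bigoplus_{\mu\in N_p}\mathbb{Z}\Lambda^0$ (one summand for each increasing $p$-tuple $\mu$, matching a basis of $\Lambda^p(\mathbb{Z}^k)$) and whose differential sends $m_\mu$ to $\sum_{i=1}^p(-1)^{i+1}(I-M_{\mu_i}^T)m_{\mu^i}$, where $\mu^i$ deletes the $i$-th coordinate. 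This is exactly $\mathcal{D}_k$, so $E^2_{p,q}\cong H_p(\mathcal{D}_k)$ for $q$ even. Finally, $\mathcal{D}_k$ is supported in degrees $0\le p\le k$, so the $E^2$ page lies in the vertical strip $0\le p\le k$; every differential $d^r$ with $r\ge k+1$ then has zero source or zero target, giving $E^\infty_{p,q}\cong E^{k+1}_{p,q}$, and since only finitely many $E^\infty_{p,q}$ are nonzero in each total degree the sequence converges to $K_*(\mathcal{A}(\Lambda))$.

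The main obstacle is the first step: producing the decomposition $\mathcal{A}(\Lambda)\otimes\mathcal{K}\cong\mathcal{F}\rtimes\mathbb{Z}^k$ with $\mathcal{F}$ AF and, above all, pinning down $K_0(\mathcal{F})$ together with its $\mathbb{Z}^k$-action so that the generators act by the matrices $M_i^T$; this is where the groupoid/skew-product model of $k$-graph algebras and the order-theoretic computation of AF $K_0$ must be assembled carefully. By comparison the Koszul identification of the $E^2$ page and the degeneration argument are formal, modulo the bookkeeping of signs and transposes in the differential.
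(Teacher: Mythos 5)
The paper offers no proof of this statement---it is quoted verbatim as Theorem 3.15 of \cite{Eva2008}---and your sketch follows Evans' original argument essentially step for step: skew product, AF core, Takai duality, the Kasparov spectral sequence for the $\mathbb{Z}^k$-crossed product, and the Koszul resolution identifying the $E^2$ page with $H_*(\mathcal{D}_k)$. The one step worth tightening is the passage from the localized module $K_0(\mathcal{F})$ back to $\mathbb{Z}\Lambda^{\mathbf{0}}$: flatness of localization only gives you the localization of $H_*(\mathcal{D}_k)$, and you then need the standard fact that the elements $t_i-1$ annihilate Koszul homology, so each $t_i$ already acts as the identity (in particular invertibly) on $H_*(\mathcal{D}_k)$ and localizing changes nothing.
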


\begin{prop}[$k=3$, Evans 2008]\label{prop:3-rank_ss}
	Let $\Lambda$ be a row-finite $3$-graph with no sources, and let $\mathbb{Z}\Lambda^\mathbf{0}$ be the group of all maps $\Lambda^\mathbf{0} \rightarrow \mathbb{Z}$ with finite support under pointwise addition. Consider the chain complex $\mathcal{D}_3$ defined as follows:
	\[
	0 \longrightarrow \mathbb{Z}\Lambda^\mathbf{0} \overset{\del_3}{\longrightarrow} \bigoplus_{i=1}^3 \mathbb{Z}\Lambda^\mathbf{0} \overset{\del_2}{\longrightarrow} \bigoplus_{i=1}^3 \mathbb{Z}\Lambda^\mathbf{0} \overset{\del_1}{\longrightarrow} \mathbb{Z}\Lambda^\mathbf{0} \longrightarrow 0,
	\]
	where $\del_1$, $\del_2$, $\del_3$ are defined by the block matrices
	\begin{align*}
		\del_1 & := \begin{bmatrix}
			I-M_1^T & I-M_2^T & I-M_3^T
		\end{bmatrix}, \\
		\del_2 & := \begin{bmatrix}
			M_2^T-I & M_3^T-I & 0 \\
			I-M_1^T & 0 & M_3^T-I \\
			0 & I-M_1^T & I-M_2^T
		\end{bmatrix}, \\
		\del_3 & := \begin{bmatrix}
			I-M_3^T \\
			M_2^T-I \\
			I-M_1^T
		\end{bmatrix}.
	\end{align*}
	Then for some subgroups $G_0 \subseteq \coker(\del_1)$ and $G_1 \subseteq \ker(\del_3)$, there exists a short exact sequence
	\[
	0 \longrightarrow \coker(\del_1) / G_0 \longrightarrow K_0(\mathcal{A}(\Lambda)) \longrightarrow \ker(\del_2) / \im(\del_3) \longrightarrow 0,
	\]
	and an isomorphism
	\[
	K_1(\mathcal{A}(\Lambda)) \cong \ker(\del_1) / \im(\del_2) \oplus G_1.
	\]
	\qed
\end{prop}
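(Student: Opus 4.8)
The plan is to specialise Evans' spectral sequence (Theorem \ref{thm:gwion}) to $k=3$ and run the usual convergence bookkeeping; this is essentially carried out in \cite{Eva2008}, and I sketch the argument. First I would check that the complex $\mathcal{D}_3$ written in the statement is isomorphic to the complex $\mathcal{D}_k$ of Theorem \ref{thm:gwion} when $k=3$. Unwinding the differential formula $\bigoplus_{\mu}m_\mu \mapsto \bigoplus_{\lambda}\sum_{\mu}\sum_{i}(-1)^{i+1}\delta_{\lambda,\mu^i}(I-M_{\mu_i}^T)m_\mu$ over the index sets $N_1=\{(1),(2),(3)\}$, $N_2=\{(1,2),(1,3),(2,3)\}$, $N_3=\{(1,2,3)\}$ produces block matrices which coincide with $\del_1,\del_2,\del_3$ after a diagonal change of basis absorbing signs (for instance the entries $M_2^T-I$ versus $I-M_2^T$). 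Such a change of basis is an isomorphism of chain complexes, so it leaves homology unchanged, and we may read off
\[
H_0(\mathcal{D}_3)=\coker(\del_1),\quad H_1(\mathcal{D}_3)=\ker(\del_1)/\im(\del_2),\quad H_2(\mathcal{D}_3)=\ker(\del_2)/\im(\del_3),\quad H_3(\mathcal{D}_3)=\ker(\del_3),
\]
the last because $(\mathcal{D}_3)_4=0$.

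Next I would analyse the pages. By Theorem \ref{thm:gwion} the second sheet is supported on the columns $p\in\{0,1,2,3\}$ and the even rows, with $E^2_{p,q}\cong H_p(\mathcal{D}_3)$ there. Since $d^2$ has bidegree $(-2,1)$ it sends an even row to an odd row, where $E^2$ vanishes; hence $d^2=0$ and $E^3=E^2$. The differential $d^3$ has bidegree $(-3,2)$, and the only pair of columns in range it can connect is $p=3\to p=0$, so its sole possibly-nonzero component is a homomorphism $d^3:H_3(\mathcal{D}_3)\to H_0(\mathcal{D}_3)$; all $d^r$ with $r\geq 4$ vanish for degree reasons, consistent with the identification $E^\infty=E^{k+1}=E^4$. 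Therefore the even-row terms of $E^\infty$ are
\[
E^\infty_{0,\ast}=\coker(d^3),\qquad E^\infty_{1,\ast}=H_1(\mathcal{D}_3),\qquad E^\infty_{2,\ast}=H_2(\mathcal{D}_3),\qquad E^\infty_{3,\ast}=\ker(d^3),
\]
and $E^\infty$ vanishes on odd rows.

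Finally I would assemble $K_\ast$ from $E^\infty$. Since the spectral sequence converges to $K_{p+q}(\mathcal{A}(\Lambda))$ and every nonzero $E^\infty$ term lies in an even row, the parity of the total degree is that of $p$; thus $K_0$ receives only the $p=0$ and $p=2$ contributions and $K_1$ only the $p=1$ and $p=3$ ones. Invoking the convergence statement of Theorem \ref{thm:gwion}, whose filtration is increasing in $p$, the $p=0$ piece embeds and the $p=2$ piece is the quotient, giving
\[
0\longrightarrow \coker(\del_1)/G_0 \longrightarrow K_0(\mathcal{A}(\Lambda))\longrightarrow \ker(\del_2)/\im(\del_3)\longrightarrow 0,\qquad G_0:=\im(d^3)\subseteq \coker(\del_1),
\]
and likewise
\[
0\longrightarrow \ker(\del_1)/\im(\del_2)\longrightarrow K_1(\mathcal{A}(\Lambda))\longrightarrow \ker(d^3)\longrightarrow 0.
\]
Now $\ker(d^3)$ is a subgroup of $\ker(\del_3)\subseteq \mathbb{Z}\Lambda^\mathbf{0}$, a free abelian group, so $\ker(d^3)$ is free abelian and the last sequence splits, yielding $K_1(\mathcal{A}(\Lambda))\cong \ker(\del_1)/\im(\del_2)\oplus G_1$ with $G_1:=\ker(d^3)\subseteq \ker(\del_3)$.

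There is no deep obstacle here; the delicate points are purely bookkeeping: verifying that the displayed $\del_i$ are an isomorphic copy of Evans' differentials (signs and basis ordering included), and correctly extracting from \cite{Eva2008} which graded piece of the filtration is the subobject of $K_0$ and which is the quotient (so that $\coker(\del_1)/G_0$, rather than $\ker(\del_2)/\im(\del_3)$, is the one that injects). One should also note that $G_0$ and $G_1$ are both governed by the single map $d^3$, but since the statement only asserts their existence this is a remark rather than something to be proved.
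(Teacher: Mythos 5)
Your argument is correct and is exactly the route the paper takes: the statement is quoted from Evans (2008) without proof, but the paper's own proof of the $k=4$ analogue (Proposition \ref{prop:4-rank_ss}) runs precisely your bookkeeping — $d^2=0$ by parity, the only surviving higher differential being $d^3:H_3\to H_0$, the two-step filtration of $K_0$, and the splitting of the $K_1$ sequence because $\ker(d^3)\subseteq\ker(\del_3)$ is free abelian. Nothing further is needed.
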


\begin{corl}[$k=3$, Evans 2008]\label{cor:3-rank}
	In addition to the hypotheses of Proposition \ref{prop:3-rank_ss}:
	\begin{enumerate}[label=(\roman*)]
		\item If $\del_1$ is surjective, then:
		\begin{enumerate}
			\item $K_0(\mathcal{A}(\Lambda)) \cong \ker(\del_2) / \im(\del_3)$,
			\item $K_1(\mathcal{A}(\Lambda)) \cong (\ker(\del_1) / \im(\del_2)) \oplus \ker(\del_3)$.
		\end{enumerate}
		\item If $\bigcap_i \ker\big(I-M_i^T\big) = 0$, then there exists a short exact sequence
		\[
		0 \longrightarrow \coker(\del_1) \longrightarrow K_0(\mathcal{A}(\Lambda)) \longrightarrow \ker(\del_2) / \im(\del_3) \longrightarrow 0,
		\]
		and an isomorphism
		\[
		K_1(\mathcal{A}(\Lambda)) \cong \ker(\del_1) / \im(\del_2).
		\]
		\qed		
	\end{enumerate}
\end{corl}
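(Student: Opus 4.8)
The plan is to derive both parts directly from Proposition \ref{prop:3-rank_ss}, by pinning down the two auxiliary subgroups $G_0 \subseteq \coker(\del_1)$ and $G_1 \subseteq \ker(\del_3)$ in each of the two situations. The first step is to record the shape of the spectral sequence of Theorem \ref{thm:gwion} when $k = 3$: its $E^2$-page is supported on the rows $q$ even and the columns $p \in \lbrace 0,1,2,3\rbrace$, with $E^2_{p,q} \cong H_p(\mathcal{D}_3)$ there. Because $d^r$ has bidegree $(-r, r-1)$, the differential $d^2$ lands in an odd row, so $d^2 = 0$ and $E^3 = E^2$; and each $d^r$ with $r \geq 4$ lands in a column of negative index, so it too vanishes. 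Hence the only higher differential that can be non-zero is $d^3 : E^3_{3,q} \to E^3_{0, q+2}$, which under the identifications $E^3_{3,q} \cong H_3(\mathcal{D}_3) = \ker(\del_3)$ (as $(\mathcal{D}_3)_4 = 0$) and $E^3_{0,q+2} \cong H_0(\mathcal{D}_3) = \coker(\del_1)$ is a single homomorphism $d^3 : \ker(\del_3) \to \coker(\del_1)$. Passing to $E^4 = E^\infty$, the columns $p = 1, 2$ are untouched, while $E^\infty_{0,q} \cong \coker(\del_1)/\im(d^3)$ and $E^\infty_{3,q} \cong \ker(d^3)$; comparing with Proposition \ref{prop:3-rank_ss} then identifies $G_0 = \im(d^3)$ and $G_1 = \ker(d^3)$. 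So both statements come down to understanding $d^3$.

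For part (i), if $\del_1$ is surjective then $\coker(\del_1) = 0$, which is the target of $d^3$; hence $d^3 = 0$, so $G_0 = 0$ and $G_1 = \ker(d^3) = \ker(\del_3)$. Substituting $G_0 = 0$ into the short exact sequence of Proposition \ref{prop:3-rank_ss} collapses its left-hand term, giving $K_0(\mathcal{A}(\Lambda)) \cong \ker(\del_2)/\im(\del_3)$; substituting $G_1 = \ker(\del_3)$ into the isomorphism for $K_1$ gives the second assertion.

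For part (ii), I would first compute $\ker(\del_3)$ from the block form of $\del_3$: a vector $v$ lies in the kernel precisely when $(I - M_1^T)v = (I - M_2^T)v = (I - M_3^T)v = 0$, so $\ker(\del_3) = \bigcap_{i} \ker(I - M_i^T)$. Under the hypothesis this intersection is $0$, hence $\ker(\del_3) = 0$; then $d^3$ has trivial domain, so $G_0 = \im(d^3) = 0$, and $G_1 \subseteq \ker(\del_3) = 0$. Feeding $G_0 = G_1 = 0$ into Proposition \ref{prop:3-rank_ss} yields the stated short exact sequence for $K_0$ together with the isomorphism $K_1(\mathcal{A}(\Lambda)) \cong \ker(\del_1)/\im(\del_2)$.

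The one part that needs genuine care, rather than being a pure substitution, is the bookkeeping of the first paragraph: verifying that $d^2$ and all $d^r$ with $r \geq 4$ vanish for degree reasons, that $d^3$ is therefore the unique possibly-nontrivial higher differential, and that the resulting identifications $E^\infty_{0,q} \cong \coker(\del_1)/G_0$ and $E^\infty_{3,q} \cong G_1$ match \emph{exactly} the groups $G_0, G_1$ appearing in Proposition \ref{prop:3-rank_ss}. Once that is pinned down, parts (i) and (ii) are immediate, the only computation being the one-line linear-algebra identification of $\ker(\del_3)$ in (ii).
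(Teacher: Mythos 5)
Your argument is correct and is essentially the argument the paper itself uses: the corollary is stated without proof (citing Evans 2008), but the paper's proof of the $k=4$ analogue, Corollary \ref{cor:4-rank}, proceeds exactly as you do — surjectivity of $\del_1$ kills the target of the only possibly non-zero differential $d^3$, forcing $G_0=0$ and $G_1=\ker(\del_3)$, while $\bigcap_i\ker(I-M_i^T)=0$ kills $\ker(\del_3)=H_3(\mathcal{D}_3)$ and hence the source of $d^3$, forcing $G_0=G_1=0$. Your preliminary bookkeeping identifying $G_0=\im(d^3)$ and $G_1=\ker(d^3)$ is the right way to make the corollary self-contained, since Proposition \ref{prop:3-rank_ss} as stated only asserts the existence of \emph{some} such subgroups.
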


\begin{prop}[$k=4$]\label{prop:4-rank_ss}
	Let $\Lambda$ be a row-finite $4$-graph with no sources, and consider the chain complex $\mathcal{D}_4$:
	\[
	0 \longrightarrow \mathbb{Z}\Lambda^\mathbf{0} \overset{\del_4}{\longrightarrow} \bigoplus_{i=1}^4 \mathbb{Z}\Lambda^\mathbf{0} \overset{\del_3}{\longrightarrow} \bigoplus_{i=1}^6 \mathbb{Z}\Lambda^\mathbf{0}  \overset{\del_2}{\longrightarrow} \bigoplus_{i=1}^4 \mathbb{Z}\Lambda^\mathbf{0} \overset{\del_1}{\longrightarrow} \mathbb{Z}\Lambda^\mathbf{0} \longrightarrow 0,
	\]
	where $\del_1, \ldots , \del_4$ are the group homomorphisms defined by the block matrices from Theorem \ref{thm:gwion}, namely:
	\begin{align*}
		\del_1 & := \begin{bmatrix}
			I-M_1^T & I-M_2^T & I-M_3^T & I-M_4^T
		\end{bmatrix}, \\
		\del_2 & := \begin{bmatrix}
			M_2^T-I & M_3^T-I & M_4^T-I & 0 & 0 & 0 \\
			I-M_1^T & 0 & 0 & M_3^T-I & M_4^T-I & 0 \\
			0 & I-M_1^T & 0 & I-M_2^T & 0 & M_4^T-I \\
			0 & 0 & I-M_1^T & 0 & I-M_2^T & I-M_3^T
		\end{bmatrix}, \\
		\del_3 & := \begin{bmatrix}
			I-M_3^T & I-M_4^T & 0 & 0 \\
			M_2^T-I & 0 & I-M_4^T & 0 \\
			0 & M_2^T-I & M_3^T-I & 0 \\
			I-M_1^T & 0 & 0 & I-M_4^T \\
			0 & I-M_1^T & 0 & M_3^T-I \\
			0 & 0 & I-M_1^T & I-M_2^T
		\end{bmatrix}, \\
		\del_4 & := \begin{bmatrix}
			M_4^T-I \\ I-M_3^T \\ M_2^T-I \\ I-M_1^T
		\end{bmatrix}.
	\end{align*}
	
	Write $H_i(\mathcal{D}_4) := \ker(\del_i) / \im(\del_{i+1})$, and let $F_2$ be a factor in the ascending filtration of the $\cst$-algebra $\mathcal{A}(\Lambda)$. Then, for some subgroups
	\[
	G_0 \subseteq \coker(\del_1),\quad G_1 \subseteq \ker(\del_4),\quad G_2 \subseteq H_1(\mathcal{D}_4),\quad G_3 \subseteq H_3(\mathcal{D}_4),
	\]
	there exist short exact sequences as follows:
	\begin{enumerate}[label=(\roman*)]
		\item $0 \longrightarrow \coker(\del_1) / G_0 \longrightarrow K_0(\mathcal{A}(\Lambda)) \longrightarrow \dfrac{K_0(\mathcal{A}(\Lambda))}{\coker(\del_1)/G_0} \longrightarrow 0$,
		
		\item $0 \longrightarrow \coker(\del_1)/G_0 \longrightarrow F_2 \longrightarrow \dfrac{\ker(\del_2)}{\im(\del_3)} \longrightarrow 0$,
		
		\item $0 \longrightarrow F_2 \longrightarrow K_0(\mathcal{A}(\Lambda)) \longrightarrow G_1 \longrightarrow 0$,
		
		\item $0 \longrightarrow \dfrac{\ker(\del_1)/\im(\del_2)}{G_2} \longrightarrow K_1(\mathcal{A}(\Lambda)) \longrightarrow G_3 \longrightarrow 0$,
	\end{enumerate}
	and sequence (iii) splits, such that $K_0(\mathcal{A}(\Lambda)) \cong F_2 \oplus G_1$.
\end{prop}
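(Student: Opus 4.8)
The plan is to read everything off Evans' spectral sequence (Theorem~\ref{thm:gwion}), specialised to $k=4$. First I would record the input: the complex $\mathcal{D}_4$ has terms $(\mathcal{D}_4)_p = \bigoplus_{\mu \in N_p}\mathbb{Z}\Lambda^\mathbf{0}$ with $|N_0| = |N_4| = 1$, $|N_1| = |N_3| = 4$, $|N_2| = 6$, and expanding Evans' differential $\bigoplus_\mu m_\mu \mapsto \bigoplus_\lambda \sum_\mu \sum_i (-1)^{i+1}\delta_{\lambda,\mu^i}(I-M_{\mu_i}^T)m_\mu$ against the lexicographic orderings of the $N_p$ produces exactly the block matrices $\del_1, \dots, \del_4$ displayed above, up to signs on individual rows and columns (which change neither kernels nor images, hence not the homology). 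Theorem~\ref{thm:gwion} then supplies a spectral sequence $\{(E^r, d^r)\}$ converging to $K_*(\mathcal{A}(\Lambda))$ with $E^\infty_{p,q} \cong E^{5}_{p,q}$, and $E^2_{p,q} \cong H_p(\mathcal{D}_4)$ when $0 \le p \le 4$ and $q$ is even, $E^2_{p,q} = 0$ otherwise.

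Next I would run the parity argument. Since $d^r$ has bidegree $(-r, r-1)$ and the nonzero entries of $E^2$ sit only in even rows, $d^r$ vanishes for $r$ even; thus $d^2 = d^4 = 0$, so $E^2 = E^3$ and $E^4 = E^5 = E^\infty$, and $d^3 \colon E^3_{p,q} \to E^3_{p-3,q+2}$ is the only differential that can act. As $E^3_{p,\bullet} = 0$ outside $0 \le p \le 4$, its sole nonzero components are $\alpha := d^3 \colon H_3(\mathcal{D}_4) \to \coker(\del_1)$ and $\beta := d^3 \colon \ker(\del_4) \to H_1(\mathcal{D}_4)$; in particular nothing enters or leaves $E^3_{2,\bullet}$. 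Passing to homology yields
\[
E^\infty_{0,\bullet} = \coker(\del_1)/\im\alpha,\quad E^\infty_{1,\bullet} = H_1(\mathcal{D}_4)/\im\beta,\quad E^\infty_{2,\bullet} = H_2(\mathcal{D}_4),\quad E^\infty_{3,\bullet} = \ker\alpha,\quad E^\infty_{4,\bullet} = \ker\beta,
\]
and I would set $G_0 := \im\alpha \subseteq \coker(\del_1)$, $G_2 := \im\beta \subseteq H_1(\mathcal{D}_4)$, $G_3 := \ker\alpha \subseteq H_3(\mathcal{D}_4)$, and $G_1 := \ker\beta \subseteq \ker(\del_4)$.

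Finally I would extract the extensions. Because the filtration underlying the spectral sequence has only the finitely many levels $0 \le p \le 4$, convergence gives a genuine finite filtration of $K_*(\mathcal{A}(\Lambda))$ whose $p$-th subquotient is $E^\infty_{p,\bullet}$, with the even-$p$ pieces landing in $K_0$ and the odd-$p$ pieces in $K_1$. On $K_1$ this is the two-step filtration $0 \subseteq \Psi \subseteq K_1(\mathcal{A}(\Lambda))$ with $\Psi \cong E^\infty_{1,\bullet}$ and $K_1(\mathcal{A}(\Lambda))/\Psi \cong E^\infty_{3,\bullet}$, which is precisely (iv). On $K_0$ it is the three-step filtration $0 \subseteq \Phi_0 \subseteq F_2 \subseteq K_0(\mathcal{A}(\Lambda))$ with $\Phi_0 \cong E^\infty_{0,\bullet}$, $F_2/\Phi_0 \cong E^\infty_{2,\bullet}$, $K_0(\mathcal{A}(\Lambda))/F_2 \cong E^\infty_{4,\bullet}$; here $F_2$ is the stated factor of the ascending filtration of $\mathcal{A}(\Lambda)$, and the inclusions $\Phi_0 \hookrightarrow K_0(\mathcal{A}(\Lambda))$, $\Phi_0 \hookrightarrow F_2$, $F_2 \hookrightarrow K_0(\mathcal{A}(\Lambda))$ give (i), (ii) and (iii) in turn. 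For the splitting of (iii) I would note that $G_1 = \ker\beta$ is a subgroup of $\ker(\del_4) \subseteq \mathbb{Z}\Lambda^\mathbf{0}$, hence free abelian (a subgroup of a free abelian group), so it is projective over $\mathbb{Z}$ and (iii) splits, giving $K_0(\mathcal{A}(\Lambda)) \cong F_2 \oplus G_1$.

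The work here is bookkeeping rather than conceptual; the one point demanding care is keeping the spectral-sequence indexing straight — confirming that the stated block matrices really are Evans' differentials, and that the filtration runs in the direction that puts $\coker(\del_1)/G_0$ at the \emph{bottom} of $K_0(\mathcal{A}(\Lambda))$ (so it is a subgroup, not a quotient) and $E^\infty_{4,\bullet}$ at the top. A consistency check against the already-stated case $k=3$ of Proposition~\ref{prop:3-rank_ss} pins down these conventions.
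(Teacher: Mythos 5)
Your proposal is correct and follows essentially the same route as the paper: invoke Evans' spectral sequence, use the parity of the bidegree to kill $d^2$ and $d^4$ so that only the two components of $d^3$ (your $\alpha$ and $\beta$) survive, read off the $E^\infty$ page, and unwind the finite filtration of $K_0$ and $K_1$, with the splitting of (iii) coming from $G_1$ being a subgroup of the free abelian group $\ker(\del_4)=H_4(\mathcal{D}_4)$. The only differences are organisational (you package the two nonzero $d^3$ components up front rather than computing each $E^5_{p,q}$ term separately), not mathematical.
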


\begin{proof}
	Write $\lbrace (E^r, d^r)\rbrace$ to denote the Kasparov spectral sequence of homological type introduced in \cite{Eva2008}. We know that $\lbrace (E^r, d^r)\rbrace$ is bounded, and that the stable value of $E_{p,q}^r$ is $E_{p,q}^\infty \cong E_{p,q}^5$. The spectral sequence converges to $\mathcal{K}_*(\mathcal{A}(\Lambda))$, so we have the finite ascending filtration (\ref{eq:filtration}) and isomorphism (\ref{eq:iso}) with $E_{p,q}^\infty \cong E_{p,q}^5 = 0$ whenever $p \in (\mathbb{Z}\setminus \lbrace 0,\ldots , 4\rbrace)$ or $q$ is odd.
	
	\textbf{Case I:} Firstly, we turn our attention to $K_0(\mathcal{A}(\Lambda))$. Write $K_0 = K_0(\mathcal{A}(\Lambda)) = \mathcal{K}_*(\mathcal{A}(\Lambda))_{p+q}$, as in Lemma 3.3 of \cite{Eva2008}, and fix the total degree, $p+q$, to be zero.
	
	We have that $E_{p,q}^5 = 0$ unless $p \in \lbrace 0,2,4\rbrace$, since if $p$ is odd and $p+q=0$, then $q$ is odd. Suppose, then, that $p \notin \lbrace 0,2,4\rbrace$, such that $0 = E_{p,q}^5 = F_p(K_0) / F_{p-1}(K_0)$, and hence $F_p(K_0) \cong F_{p-1}(K_0)$. We can deduce that, in our filtration, we have $F_1(K_0) = F_0(K_0)$, and $F_{i+1}(K_0) = F_i(K_0)$ for all $i \geq 2$.
	
	By the same argument, it follows that $F_i(K_0) = 0$ for all $i < 0$, and so the filtration becomes
	\[
	0 \subseteq F_0(K_0) \subseteq F_2(K_0) \subseteq K_0.
	\]
	Next, we consider the non-zero $E_{p,q}^5$ terms. From (\ref{eq:iso}), we have:
	\begin{itemize}
		\item $E_{0,0}^5 \cong F_0(K_0)$,
		\item $E_{2,-2}^5 \cong F_2(K_0) / F_1(K_0) \cong F_2(K_0) / F_0(K_0)$,
		\item $E_{4,-4}^5 \cong F_4(K_0) / F_3(K_0) \cong K_0 / F_2(K_0)$.
	\end{itemize}
	It then follows that we have short exact sequences:
	\begin{enumerate}[label=(\roman*')]
		\item $0 \longrightarrow E_{0,0}^5 \longrightarrow K_0 \longrightarrow K_0 / E_{0,0}^5 \longrightarrow 0$,
		\item $0 \longrightarrow E_{0,0}^5 \longrightarrow F_2(K_0) \longrightarrow E_{2,-2}^5 \longrightarrow 0$,
		\item $0 \longrightarrow F_2(K_0) \longrightarrow K_0 \longrightarrow E_{4,-4}^5 \longrightarrow 0$.
	\end{enumerate}
	
	\textbf{Case II:} $p+q = 1$.
	
	We consider $K_1(\mathcal{A}(\Lambda))$. Note that in order for $E_{p,q}^5$ to be non-zero, we must have $p \in \lbrace 0,\ldots , 4\rbrace$ and $q$ even. But, the only pairs $(p,q)$ of total degree $1$ are $(1,0)$ and $(3,-2)$. Thus, it follows analogously from Proposition 3.17 in \cite{Eva2008} that there is a short exact sequence
	\[
	0 \longrightarrow E_{1,0}^5 \longrightarrow K_1(\mathcal{A}(\Lambda)) \longrightarrow E_{3,-2}^5 \longrightarrow 0.
	\]
	
	\begin{center}
		* \qquad * \qquad *
	\end{center}
	The final step of the proof is to compute the following:
	\begin{enumerate}[label=(\alph*)]
		\item For $K_1(\mathcal{A}(\Lambda))$, the terms $E_{1,0}^5$ and $E_{3,-2}^5$,
		\item For $K_0(\mathcal{A}(\Lambda))$, the terms $E_{0,0}^5$, $E_{2,-2}^5$, and $E_{4,-4}^5$.
	\end{enumerate}
	
	\textbf{Step (a):} $E_{1,0}^5$ and $E_{3,-2}^5$.
	
	We proceed by considering the differentials $d^4$, $d^3$, and $d^2$. Since $E_{p,q}^4 = 0$ whenever $p \in (\mathbb{Z} \setminus\lbrace 1,\ldots , 4\rbrace)$, we necessarily have:
	\[
	d^4 : E_{p,q}^4 \longrightarrow E_{p-4,q+3}^4, \qquad d^4 : E_{p+4,q-3}^4 \longrightarrow E_{p,q}^4,
	\]
	for $p \in \lbrace 0,4\rbrace$. However, in either case we must have $q,q+3$ or $q,q-3$ both even: a contradiction. Hence $d^4$ is the zero map.
	
	Similarly, it follows that the only non-zero components of the $d^3$ differential are
	\[
	d^3 : E_{3,q}^3 \longrightarrow E_{0,q+2}, \qquad d^3 : E_{4,q}^3 \longrightarrow E_{1,q+2}^3,
	\]
	for $q$ even. Furthermore, we can deduce that $d^2$ must also be the zero map, as in Proposition 3.16 of \cite{Eva2008}. Thus, we have:
	\begin{align*}
		E_{1,0}^5 &\cong H\big(E_{1,0}^4\big) = \frac{\ker\big(d^4 : E_{1,0}^4 \rightarrow E_{-3,3}^4\big)}{\im\big(d^4 : E_{5,-3}^4 \rightarrow E_{1,0}^4\big)} = E_{1,0}^4,\\
		E_{1,0}^4 &\cong H\big(E_{1,0}^3\big) = \frac{\ker\big(d^3 : E_{1,0}^3 \rightarrow E_{-2,2}^3\big)}{\im\big( d^3 : E_{4,-2}^3 \rightarrow E_{1,0}^3\big)} = E_{1,0}^3 / \im\big(d^3 : E_{4,-2}^3 \rightarrow E_{1,0}^3\big).
	\end{align*}
	Now, let $G_2$ be a subgroup of $E_{1,0}^3 = H_1(\mathcal{D}_4)$, namely $G_2 := \im\big(d^3 : E_{4,-2}^3 \rightarrow E_{1,0}^3\big)$. Then we have
	\[
	E_{1,0}^3 \cong H\big(E_{1,0}^2\big) = E_{1,0}^2 = H_1(\mathcal{D}_4),
	\]
	and so $E_{1,0}^5 \cong (\ker(\del_1)/ \im(\del_2)) / G_2$.
	
	It remains to compute $E_{3,-2}^5$. We have:
	\begin{align*}
		E_{3,-2}^5 &\cong H\big(E_{3,-2}^4\big) = \frac{\ker\big( d^4 : E_{3,-2}^4 \rightarrow E_{-1,1}^4\big)}{\im\big( d^4 : E_{7,-5}^4 \rightarrow E_{3,-2}^4\big)} = E_{3,-2}^4, \\
		E_{3,-2}^4 &\cong H\big(E_{3,-2}^3\big) = \frac{\ker\big(d^3 : E_{3,-2}^3 \rightarrow E_{0,0}^3\big)}{\im\big(d^3 : E_{6,-4}^3 \rightarrow E_{3,-2}^3\big)} = \ker\big(d_{3,-2}^3\big) \subseteq E_{3,-2}^3.
	\end{align*}
	Now, $E_{3,-2}^3 \cong H(E_{3,-2}^3) \subseteq E_{3,-2}^3 = H_3(\mathcal{D}_4)$, and hence
	\[
	E_{3,-2}^5 \cong \ker\big( d_{3,-2}^3\big) \subseteq E_{3,-2}^3 = H_3(\mathcal{D}_4).
	\]
	Write $G_3 := \ker(d_{3,-2}^3)$, which is a subgroup of $H_3(\mathcal{D}_4)$. Then we have the short exact sequence (iv):
	\[
	0 \longrightarrow H_1(\mathcal{D}_4) / G_2 \longrightarrow K_1(\mathcal{A}(\Lambda)) \longrightarrow G_3 \longrightarrow 0.
	\]
	
	\textbf{Step (b):} $E_{0,0}^5$, $E_{2,-2}^5$, and $E_{4,-4}^5$.
	
	Firstly, consider $E_{0,0}^5$. We know that $E_{0,0}^5 \cong H(E_{0,0}^4) = E_{0,0}^4$, since the differential $d^4$ is the zero map. We also have
	\[
	E_{0,0}^4 \cong H\big(E_{0,0}^3\big) = \frac{\ker\big( d^3 : E_{0,0}^3 \rightarrow E_{-3,0}^3\big)}{\im\big(d^3 : E_{3,-2}^3 \rightarrow E_{0,0}^3\big)} = E_{0,0}^3 / \im\big(d_{3,-2}^3\big).
	\]
	Note that $E_{0,0}^3 \cong H(E_{0,0}^2) = E_{0,0}^2 = H_0(\mathcal{D}_4) = \coker(\del_1)$, so that if we write $G_0 := \im(d_{3,-2}^3)$, we obtain $E_{0,0}^5 = \coker(\del_1) / G_0$. This, together with the sequence (i') above, gives us the sequence (i).
	
	Now, we turn our attention to $E_{2,-2}^5$ and $E_{4,-4}^5$. We know that $E_{2,-2}^5 \cong H(E_{2,-2}^4) = E_{2,-2}^4$, by virtue of $d^4$ being the zero map. We also have
	\[
	E_{2,-2}^4 \cong H\big(E_{2,-2}^3\big) = \frac{\ker\big( d^3 : E_{2,-2}^3 \rightarrow E_{-1,0}^3\big)}{\im\big( d^3 : E_{5,-4}^3 \rightarrow E_{2,-2}^3 \big)} = E_{2,-2}^3 \cong H\big(E_{2,-2}^2\big) = H_2(\mathcal{D}_4),
	\]
	and so $E_{2,-2}^5 \cong H_2(\mathcal{D}_4)$. Together with (ii') and the above, this gives us (ii). We also know that $E_{4,-4}^5 \cong H(E_{4,-4}^4) = E_{4,-4}^5$, and
	\[
	E_{4,-4}^4 \cong H\big(E_{4,-4}^3\big) = \frac{\ker\big( d^3 : E_{4,-4}^3 \rightarrow E_{1,-2}^3\big)}{\im\big( d^3 : E_{7,-6}^3 \rightarrow E_{4,-4}^3 \big)} = \ker\big(d_{4,-4}^3\big) \subseteq E_{4,-4}^3,
	\]
	and so $E_{4,-4}^3 \cong H(E_{4,-4}^2) = E_{4,-4}^2 = H_4(\mathcal{D}_4)$. Writing $G_1 := \ker(d_{4,-4}^3)$, and putting this together with (iii'), we obtain the sequence (iii). We know that $H_4(\mathcal{D}_4)$ is a free Abelian group, and since subgroups of such groups are also free Abelian, it follows that $G_1$ is free Abelian, and sequence (iii) splits.
\end{proof}

\begin{corl}[$k=4$]\label{cor:4-rank}
	In addition to the hypotheses of Proposition \ref{prop:4-rank_ss}:
	\begin{enumerate}[label=(\roman*)]
		\item If $\del_1$ is surjective, then there exists an isomorphism $F_2 \cong \ker(\del_2)/\im(\del_3)$, and the short exact sequences reduce to:
		\begin{enumerate}[label=(\alph*)]
			\item $K_0(\mathcal{A}(\Lambda)) \cong \dfrac{\ker(\del_2)}{\im(\del_3)} \oplus G_1$,
			\item $0 \longrightarrow \dfrac{\ker(\del_1)/\im(\del_2)}{G_2} \longrightarrow K_1(\mathcal{A}(\Lambda)) \longrightarrow \ker(\del_3) / \im(\del_4) \longrightarrow 0$.
		\end{enumerate}
		\item If $\bigcap_i \ker \big( I - M_i^T\big) = 0$, then $K_0(\mathcal{A}(\Lambda)) \cong F_2$, and the sequences reduce to:
		\begin{enumerate}[label=(\alph*)]
			\item $0 \longrightarrow \coker(\del_1) / G_0 \longrightarrow K_0(\mathcal{A}(\Lambda)) \longrightarrow \ker(\del_3) / \im(\del_2) \longrightarrow 0$,
			\item $0 \longrightarrow\ker(\del_1) / \im(\del_2) \longrightarrow K_1(\mathcal{A}(\Lambda)) \longrightarrow G_3 \longrightarrow 0$.
		\end{enumerate}
	\end{enumerate}
\end{corl}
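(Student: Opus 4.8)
The plan is to obtain both parts as formal consequences of Proposition~\ref{prop:4-rank_ss}, by returning to its proof to recall how the subgroups $G_0,G_1,G_2,G_3$ arise as kernels and images of the third-page differential $d^3$ of the Kasparov spectral sequence, and then using each hypothesis to annihilate the relevant $E^3$-term. From that proof we have $G_0=\im(d^3_{3,-2})$ and $G_3=\ker(d^3_{3,-2})$, where $d^3_{3,-2}\colon E^3_{3,-2}\to E^3_{0,0}$; $G_1=\ker(d^3_{4,-4})$, where $d^3_{4,-4}\colon E^3_{4,-4}\to E^3_{1,-2}$; and $G_2=\im(d^3_{4,-2})$, where $d^3_{4,-2}\colon E^3_{4,-2}\to E^3_{1,0}$. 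Since $d^2$ is the zero map, for $p\in\{0,\ldots,4\}$ and $q$ even we have $E^3_{p,q}=E^2_{p,q}=H_p(\mathcal{D}_4)$; in particular $E^3_{0,0}\cong H_0(\mathcal{D}_4)=\coker(\del_1)$, while $E^3_{4,-4}\cong E^3_{4,-2}\cong H_4(\mathcal{D}_4)=\ker(\del_4)$, the final equality holding because $(\mathcal{D}_4)_5=0$.

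\textbf{Part (i).} Suppose $\del_1$ is surjective, so that $\coker(\del_1)=0$. Then $G_0\subseteq\coker(\del_1)=0$, so $G_0=0$, and sequence~(ii) of Proposition~\ref{prop:4-rank_ss} collapses to an isomorphism $F_2\cong\ker(\del_2)/\im(\del_3)$. Combining this with the splitting $K_0(\mathcal{A}(\Lambda))\cong F_2\oplus G_1$ from Proposition~\ref{prop:4-rank_ss} gives~(i)(a). For~(i)(b), note that $E^3_{0,0}\cong\coker(\del_1)=0$, so $d^3_{3,-2}$ is the zero map; hence $G_3=\ker(d^3_{3,-2})=E^3_{3,-2}\cong H_3(\mathcal{D}_4)=\ker(\del_3)/\im(\del_4)$, and substituting into sequence~(iv) of Proposition~\ref{prop:4-rank_ss} yields~(i)(b).

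\textbf{Part (ii).} Suppose $\bigcap_i\ker(I-M_i^T)=0$. Reading off the block form of $\del_4$ gives $\ker(\del_4)=\bigcap_{i=1}^4\ker(I-M_i^T)=0$, hence $H_4(\mathcal{D}_4)=0$, and therefore $E^3_{4,-4}=E^3_{4,-2}=0$. It follows at once that $G_1=\ker(d^3_{4,-4})=0$ and $G_2=\im(d^3_{4,-2})=0$. The vanishing of $G_1$, together with the splitting $K_0(\mathcal{A}(\Lambda))\cong F_2\oplus G_1$, gives $K_0(\mathcal{A}(\Lambda))\cong F_2$; transporting sequence~(ii) of Proposition~\ref{prop:4-rank_ss} along this isomorphism yields~(ii)(a), with right-hand term $H_2(\mathcal{D}_4)=\ker(\del_2)/\im(\del_3)$. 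The vanishing of $G_2$ turns sequence~(iv) into the short exact sequence $0\to\ker(\del_1)/\im(\del_2)\to K_1(\mathcal{A}(\Lambda))\to G_3\to 0$, which is~(ii)(b).

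\textbf{Main obstacle.} There is no real difficulty beyond careful bookkeeping: the one point needing attention is to identify, for each $G_i$, precisely which entry of the $E^3$-page it lives over, and to match that entry with the correct homology group $H_p(\mathcal{D}_4)$, so that the hypotheses $\coker(\del_1)=0$ and $\ker(\del_4)=0$ translate directly into the vanishing of the relevant $G_i$. The truncation of the filtration of $K_*(\mathcal{A}(\Lambda))$ at $p\in\{0,4\}$ and the vanishing of the differentials $d^2$ and $d^4$ were already established in the proof of Proposition~\ref{prop:4-rank_ss} and are invoked here unchanged.
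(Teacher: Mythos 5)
Your proposal is correct and follows essentially the same route as the paper: both arguments trace $G_0,G_1,G_2,G_3$ back to the kernels and images of the $d^3$ differentials identified in the proof of Proposition \ref{prop:4-rank_ss}, then use $\coker(\del_1)=0$ to kill $E^3_{0,0}$ (forcing $d^3_{3,-2}=0$, so $G_3=H_3(\mathcal{D}_4)$) and $\ker(\del_4)=0$ to kill $E^3_{4,-4}$ and $E^3_{4,-2}$ (forcing $G_1=G_2=0$). The only cosmetic difference is that in (i)(a) you pass through $G_0=0$ before collapsing sequence (ii), whereas the paper notes directly that $\coker(\del_1)/G_0=0$; the substance is identical.
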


\begin{proof}
	To show (i), suppose that $\del_1$ is surjective, such that $\coker(\del_1) = 0$, and $F_2(K_0) \cong \ker(\del_2)/\im(\del_3)$. Then the split exact sequence (iii) from Proposition \ref{prop:4-rank_ss} gives us (i)(a).
	
	Now, we have $0 = \coker(\del_1) = H_0(\mathcal{D}_4) = E_{0,0}^3$, and so $d^3 : E_{3,-2}^3 \rightarrow E_{0,0}^3$ is the zero map. Hence $\ker(d_{3,-2}^3) = E_{3,-2}^3 = H_3(\mathcal{D}_4)$, and we obtain (i)(b) from Proposition \ref{prop:4-rank_ss}(iv).
	
	To show (ii), suppose that $\bigcap_i \big( I - M_i^T\big) = 0$. Then $\ker(\del_4) = 0$, and hence $G_1 = 0$ and $K_0 \cong F_2$. This gives us (ii)(a).
	
	Now, from Proposition \ref{prop:4-rank_ss}(iv), we have the sequence
	\[
	0 \longrightarrow \frac{\ker(\del_1) / \im(\del_2)}{G_2} \longrightarrow K_1 \longrightarrow G_3 \longrightarrow 0,
	\]
	where
	\begin{align*}
		G_2 & := \im\big( d_{4,-2}^3 : E_{4,-2}^3 \longrightarrow E_{1,0}^3 = H_1(\mathcal{D}_4)\big), \\
		G_3 & := \ker \big( d_{3,-2}^3 \big) \subseteq \ker(\del_3) / \im(\del_4).
	\end{align*}
	However, we also have that $E_{4,-2}^3 \cong H(E_{4,-2}^2) = E_{4,-2}^2 = H_4(\mathcal{D}_4) = \ker(\del_4)$. Since $\ker(\del_4) = 0$, it follows that the differential $\del_{4,-2}^3$ has domain $0$, and is hence the zero map. Therefore $G_2 = 0$, and the result follows.
\end{proof}

We have computed similar short exact sequences in the case where $k=5$, and the proof, omitted, is broadly similar to the above.

\begin{prop}[$k=5$]\label{prop:5-rank_ss}
	Let $\Lambda$ be a row-finite $5$-graph with no sources, and let $\mathcal{D}_5$ be the corresponding chain complex with differentials $\del_1,\ldots , \del_5$ defined in Theorem \ref{thm:gwion}. Let $F_2$, $F_3$ be factors in the ascending filtration of $K_0(\mathcal{A}(\Lambda))$. Then, for some subgroups
	\[
	\begin{array}{ll}
		G_0 \subseteq \coker(\del_1) = H_0(\mathcal{D}_5), & 
		G_4 \cong \ker\big( d_{5,-4}^5\big) \subseteq \ker\big( d_{5,-4}^3\big) \subseteq H_5(\mathcal{D}_5), \\
		G_1 \cong \im\big(d_{5,-4}^5\big) \subseteq H_0(\mathcal{D}_5)/G_0, &
		G_5 \cong \im\big( d_{4,-2}^3 \big) \subseteq H_1(\mathcal{D}_5), \\
		G_2 \cong \im\big( d_{5,-4}^3\big) \subseteq H_2(\mathcal{D}_5), &
		G_6 \cong \ker\big( d_{3,-2}^3\big) \subseteq H_3(\mathcal{D}_5), \\
		G_3 \cong \ker\big( d_{4,-4}^3\big) \subseteq H_4(\mathcal{D}_5), &
	\end{array}
	\]
	there exist short exact sequences as follows:
	\begin{enumerate}[label=(\roman*)]
		\item $ 0 \longrightarrow A:= \dfrac{\coker(\del_1)/G_0}{G_1} \longrightarrow K_0(\mathcal{A}(\Lambda)) \longrightarrow K_0(\mathcal{A}(\Lambda))/A \longrightarrow 0$,
		
		\item $ 0 \longrightarrow A \longrightarrow F_2 \longrightarrow \dfrac{\ker(\del_2)/\im(\del_3)}{G_2} \longrightarrow 0$,
		
		\item $0 \longrightarrow F_2 \longrightarrow K_0(\mathcal{A}(\Lambda)) \longrightarrow G_3 \longrightarrow 0$,
	\end{enumerate}
	and there is an isomorphism $K_1(\mathcal{A}(\Lambda)) \cong F_3 \oplus G_4$, where $F_3$ satisfies
	\[
	0 \longrightarrow \frac{\ker(\del_1)/\im(\del_2)}{G_5} \longrightarrow F_3 \longrightarrow G_6 \longrightarrow 0.
	\]
	\qed
\end{prop}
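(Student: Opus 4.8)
\section*{Proof proposal}

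The plan is to run Evans' spectral sequence $\{(E^r,d^r)\}$ of Theorem~\ref{thm:gwion} for $k=5$, in the same way as the proof of Proposition~\ref{prop:4-rank_ss}; the one new feature is that the fifth differential $d^5$ need no longer vanish. Recall that $\{(E^r,d^r)\}$ converges to $K_\ast(\mathcal{A}(\Lambda))$ with $E_{p,q}^\infty\cong E_{p,q}^6$, and that the page $E^2=E^3$ is supported on $p\in\{0,\dots,5\}$ with $q$ even, where $E_{p,q}^2\cong H_p(\mathcal{D}_5)$. Since $d^r$ has bidegree $(-r,r-1)$, a non-zero component needs both its source and its target in the strip $p\in\{0,\dots,5\}$ with $q$ and $q+r-1$ both even, and the latter forces $r$ to be odd; hence $d^2=d^4=0$, so $E^2=E^3$ and $E^4=E^5$. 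The only surviving components of $d^3\colon E_{p,q}^3\to E_{p-3,q+2}^3$ are those with $p\in\{3,4,5\}$, and the only surviving component of $d^5\colon E_{p,q}^5\to E_{p-5,q+4}^5$ is $E_{5,q}^5\to E_{0,q+4}^5$.

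Next I would compute the page $E^6\cong E^\infty$. Taking $d^3$-homology on $E^3$ realises $E_{0,q}^4,E_{1,q}^4,E_{2,q}^4$ as the quotients of $H_0(\mathcal{D}_5),H_1(\mathcal{D}_5),H_2(\mathcal{D}_5)$ by $G_0:=\im(d_{3,q-2}^3)$, $G_5:=\im(d_{4,q-2}^3)$, $G_2:=\im(d_{5,q-2}^3)$, and realises $E_{3,q}^4,E_{4,q}^4,E_{5,q}^4$ as the subgroups $G_6:=\ker(d_{3,q}^3)\subseteq H_3(\mathcal{D}_5)$, $G_3:=\ker(d_{4,q}^3)\subseteq H_4(\mathcal{D}_5)$, $\ker(d_{5,q}^3)\subseteq H_5(\mathcal{D}_5)$; at the relevant total degrees these are the groups named in the statement. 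Since $E^5=E^4$, taking $d^5$-homology then replaces $E_{0,0}^5=\coker(\del_1)/G_0$ by $A:=(\coker(\del_1)/G_0)/G_1$ with $G_1:=\im(d_{5,-4}^5)$, cuts $E_{5,-4}^5=\ker(d_{5,-4}^3)$ down to $G_4:=\ker(d_{5,-4}^5)$, and leaves every other term unchanged.

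Finally I would read off the result from convergence. For $K_0=K_0(\mathcal{A}(\Lambda))$, total degree $0$ forces $q=-p$ even, so the filtration collapses to $0\subseteq F_0\subseteq F_2\subseteq K_0$ with successive quotients $E_{0,0}^\infty=A$, $E_{2,-2}^\infty=(\ker(\del_2)/\im(\del_3))/G_2$, and $E_{4,-4}^\infty=G_3$; extracting the short exact sequences relating $A\subseteq F_2\subseteq K_0$ then gives (i)--(iii), with $F_2:=F_2(K_0)$. For $K_1$, total degree $1$ forces $p$ odd, so the filtration is $0\subseteq F_1\subseteq F_3\subseteq K_1$ with quotients $E_{1,0}^\infty=(\ker(\del_1)/\im(\del_2))/G_5$, $E_{3,-2}^\infty=G_6$, and $E_{5,-4}^\infty=G_4$; the inner two give the stated extension describing $F_3:=F_3(K_1)$, and the outer one gives $0\to F_3\to K_1\to G_4\to 0$. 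Since $(\mathcal{D}_5)_5=\bigoplus_{\mu\in N_5}\mathbb{Z}\Lambda^\mathbf{0}$ is free abelian, so is its subgroup $H_5(\mathcal{D}_5)=\ker(\del_5)$, and hence so is $G_4$; this last sequence therefore splits and $K_1\cong F_3\oplus G_4$.

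The bulk of the work, and the only place requiring care, is the bookkeeping: for each $(p,q)$ of total degree $0$ or $1$ one must track which page the term stabilises on, which incoming and outgoing differentials have already been accounted for, and match the resulting kernels and images to $G_0,\dots,G_6$. The single conceptual difference from the $k=4$ argument is that $d^5$ is now generally non-trivial, so one cannot stop at the $E^4$ page; handling its image and kernel is exactly what produces the extra quotient $A$ inside $\coker(\del_1)/G_0$ and the extra subgroup $G_4\subseteq H_5(\mathcal{D}_5)$ appearing in the statement.
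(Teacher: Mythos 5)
Your proposal is correct and follows exactly the route the paper intends: the $k=5$ proof is omitted there as being ``broadly similar'' to Proposition \ref{prop:4-rank_ss}, and your argument is precisely that adaptation, with the parity argument killing $d^2$ and $d^4$, the identification of the $E^4=E^5$ terms with the groups $G_0,G_2,G_3,G_5,G_6$, the new nontrivial differential $d^5_{5,-4}$ producing $G_1$ and $G_4$, and the splitting of the top sequence for $K_1$ via freeness of $H_5(\mathcal{D}_5)=\ker(\del_5)$. All the bookkeeping of bidegrees and filtration quotients checks out against the statement.
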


By the \textit{Kirchberg--Phillips Classification} (\cite{KirPrep}, \cite{Phi2000}), any separable, nuclear, unital, purely-infinite, simple $\cst$-algebra which satisfies the \textit{Rosenberg--Schochet Universal Coefficient Theorem} \cite{RosSch1987} is completely determined by its K-groups and the class of the identity in $K_0$.

\begin{lem}\label{lem:aperiodicity}
	Let $\Gamma$ be a $k$-cube group with adjacency structure $E_1,\ldots , E_k$. Then the induced rank-$k$ graph $\mathcal{G}(\Gamma)$ satisfies the \emph{Aperiodicity Condition} (\cite{KumPas2000}, \cite{MutPrep}).
\end{lem}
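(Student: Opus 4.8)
We verify the Aperiodicity Condition of \cite{KumPas2000} (equivalently, that of \cite{MutPrep}): for every vertex $C_0$ of $\mathcal{G}(\Gamma)$ there is an infinite path $x$ with $r(x)=C_0$ which is \emph{aperiodic}, meaning $\sigma^{\mathbf p}x\neq\sigma^{\mathbf q}x$ for all distinct $\mathbf p,\mathbf q\in\mathbb{N}^k$. The plan is to translate this into a statement about ends of the building. Write $T_i:=T(|E_i|)$ and $\Delta=T_1\times\cdots\times T_k$, the $k$-rank building on which $\Gamma$ acts freely and transitively, coordinatewise, by Proposition \ref{prop:prod_of_trees}; thus $\mathcal{G}(\Gamma)^{\mathbf 0}=\mathcal{S}_k(\Gamma)$ is the finite set of pointed oriented chambers of $\mathcal{M}(\Gamma)=\Delta/\Gamma$, and a chamber $C$ of $\Delta$ is determined by its type $[C]\in\mathcal{S}_k(\Gamma)$ together with a $\Gamma$-translate. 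Fixing a geometric realisation of $C_0$ in $\Delta$ with $i$-th edge $f_i$, the Unique Common Extension Property (Proposition \ref{prop:ufp}) shows that an infinite path $x$ with $r(x)=C_0$ is exactly the data of a geodesic ray $\rho_i$ in $T_i$ starting along $f_i$, for each $i$; equivalently, of a tuple $\xi=(\xi_1,\dots,\xi_k)\in\Xi:=\prod_{i=1}^k\Xi_i$, where $\Xi_i$ is the space of ends of $T_i$ lying beyond $f_i$. Write $x_\xi$ for this path and $C_{\mathbf n}$ for the chamber of $\Delta$ at sector position $\mathbf n$, so that $x_\xi(\mathbf n)=[C_{\mathbf n}]$.

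Next I would show that $\mathrm{Stab}_\Gamma(\xi)=\{e\}$ implies $x_\xi$ is aperiodic. Suppose instead $\sigma^{\mathbf p}x_\xi=\sigma^{\mathbf q}x_\xi$ with $\mathbf p\neq\mathbf q$; comparing underlying vertices gives $[C_{\mathbf p+\mathbf n}]=[C_{\mathbf q+\mathbf n}]$ for all $\mathbf n\in\mathbb{N}^k$. Since $\Gamma$ acts freely on the chambers of $\Delta$ (a non-trivial chamber-stabiliser would have a power fixing a vertex, contradicting freeness on vertices and torsion-freeness), there is a unique $g_{\mathbf n}\in\Gamma$ with $g_{\mathbf n}\cdot C_{\mathbf q+\mathbf n}=C_{\mathbf p+\mathbf n}$; as $g_{\mathbf n}$ must also carry the $(k-1)$-face shared by $C_{\mathbf q+\mathbf n}$ and $C_{\mathbf q+\mathbf n+e_i}$ to that shared by $C_{\mathbf p+\mathbf n}$ and $C_{\mathbf p+\mathbf n+e_i}$, uniqueness forces $g_{\mathbf n}=g_{\mathbf n+e_i}$, so $g:=g_{\mathbf 0}$ satisfies $g\cdot C_{\mathbf q+\mathbf n}=C_{\mathbf p+\mathbf n}$ for every $\mathbf n$, and $g\neq e$ because $\mathbf p\neq\mathbf q$. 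Writing $g=(g_1,\dots,g_k)$ for its coordinatewise action and reading off the $i$-th coordinate gives $g_i\bigl(\rho_i(q_i+n)\bigr)=\rho_i(p_i+n)$ for all $n\ge 0$; letting $n\to\infty$ and using continuity of $g_i$ on $T_i\cup\partial T_i$ yields $g_i\xi_i=\xi_i$. Hence $g\in\mathrm{Stab}_\Gamma(\xi)\setminus\{e\}$, a contradiction.

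It remains to produce, for each $C_0$, a point $\xi\in\Xi$ with $\mathrm{Stab}_\Gamma(\xi)=\{e\}$. Every non-trivial $g=(g_1,\dots,g_k)\in\Gamma$ has at least one hyperbolic factor $g_i\in\Aut(T_i)$: were every $g_i$ elliptic, each $g_i^2$ would fix a vertex of $T_i$, so $g^2$ would fix a vertex of $\Delta$, forcing $g^2=e$ and then $g=e$ as $\Gamma$ is torsion free. A hyperbolic tree automorphism fixes exactly two ends, so $\mathrm{Fix}_{\partial T_i}(g_i)$ has at most two points, and therefore $\{\xi\in\Xi:g\xi=\xi\}$ is nowhere dense in $\Xi$ -- each $\Xi_i$ being perfect because $T_i$ has valency $|E_i|\ge 4$. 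As $\Gamma$ is countable, $\bigcup_{g\in\Gamma\setminus\{e\}}\{\xi\in\Xi:g\xi=\xi\}$ is meagre in the compact metric space $\Xi$; by the Baire category theorem some $\xi\in\Xi$ avoids it, and then $x_\xi$ is an aperiodic infinite path with $r(x_\xi)=C_0$. As $C_0$ was arbitrary, $\mathcal{G}(\Gamma)$ satisfies the Aperiodicity Condition.

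The most delicate point is the geometric dictionary in the first two paragraphs: pinning down how orientations and the non-backtracking built into the matrices $M_i$ make an infinite path the same data as a tuple of ends, and checking that $g_{\mathbf n}$ is genuinely independent of $\mathbf n$. The Baire-category step is routine, and for $k=2$ the argument recovers the aperiodicity of the $2$-graphs of BMW-groups underlying \cite{RobSte1999} and \cite{KimRob2002}.
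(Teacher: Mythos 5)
Your proof is correct, but it takes a genuinely different route from the paper's. The paper's argument is a short combinatorial sketch (deferring to Lemma 4.2 of \cite{MutPrep} and \S 2 of \cite{RobSte1999}): since $|E_i|\geq 4$, every vertex of $\mathcal{G}(\Gamma)$ has at least two outgoing and two incoming edges of each colour, so one can greedily build an infinite path that never commits to a periodic pattern. You instead pass to the building: you identify infinite paths rooted at a chamber $C_0$ with sectors, i.e.\ with points of $\Xi=\prod_i\Xi_i\subseteq\prod_i\partial T_i$; you show that a period $\mathbf p-\mathbf q$ of such a path produces a non-trivial $g\in\Gamma$ fixing the corresponding boundary point (the telescoping argument $g_{\mathbf n}=g_{\mathbf n+e_i}$ is sound, using freeness of the $\Gamma$-action on pointed chambers); and you then kill all such $g$ at once by a Baire-category argument, using that every non-trivial element of a torsion-free group acting freely on the vertices of $\Delta$ must have a hyperbolic coordinate and hence at most two fixed ends in that factor. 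What this buys is an essentially complete argument where the paper offers only a pointer --- and, as a bonus, it shows that aperiodic paths are generic in $\Xi$ --- at the price of the path/sector dictionary, which you rightly flag as the delicate point and which does hold here: the incidence matrices are $\{0,1\}$-valued, so a path is determined by its vertex sequence, and the non-backtracking clauses in Definition \ref{def:adjacency_functions} force the coordinate projections of a sector to be geodesic rays. Both arguments ultimately rest on the same hypothesis $|E_i|\geq 4$: yours needs it to make each $\Xi_i$ perfect, the paper's to guarantee at least two continuations at every step.
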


\begin{proof}
	The result can be obtained with a slight adaptation to that of Lemma 4.2 in \cite{MutPrep}, as a result of the observations in \S 2 of \cite{RobSte1999}. Briefly, the Aperiodicity Condition is satisfied if, for every vertex $S \in \mathcal{G}(\Gamma)^\mathbf{0}$, there is an infinite path $\varphi \in S \mathcal{G}(\Gamma)^\infty$ such that $\varphi(\mathbf{n} + \mathbf{p}) \neq \varphi(\mathbf{m} + \mathbf{p})$ for all $\mathbf{m}, \mathbf{n}, \mathbf{p} \in \mathbb{N}^k$ with $\mathbf{m} \neq \mathbf{n}$ (c.f. \cite{BNR2014}, \cite{EvaSim2012}, \cite{RobSim2007}). Since $|E_i| \geq 4$ for each $i$, there are always at least three $k$-cubes which are $E_i$-adjacent to any $k$-cube. Moreover, this means that for each $S \in \mathcal{G}(\Gamma)^\mathbf{0}$, we can find two distinct $k$-cubes $T,T' \in \mathcal{G}(\Gamma)^\mathbf{0}$ such that $M_1 M_2 \cdots M_k (S,T) = M_1 M_2 \cdots M_k (S,T') = 1$ Hence, if $\varphi \in S \mathcal{G}(\Gamma)^\infty$ is an infinite path such that there exist $\mathbf{m}, \mathbf{n}$ with $\varphi(\mathbf{n} + \mathbf{p}) = \varphi(\mathbf{m} + \mathbf{p})$ for all $\mathbf{p}$, then we can construct a new aperiodic path by diverting $\varphi$ to a different vertex, breaking this ``infinite cycle'' in the manner of \cite[Lemma 4.2]{MutPrep}.
\end{proof}

\begin{lem}\label{lem:connected}
	Let $\Gamma$ be a $k$-cube group, and let $\mathcal{G}(\Gamma)$ be its induced rank-$k$ graph. Then $\mathcal{G}(\Gamma)$ is strongly connected in the sense that, for any two vertices $S,T \in \mathcal{G}(\Gamma)^\mathbf{0}$, there is a path from $S$ to $T$.
\end{lem}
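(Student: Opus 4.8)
The plan is to argue geometrically inside the $k$-rank affine building $\Delta = T(|E_1|) \times \cdots \times T(|E_k|)$, on which $\Gamma$ acts freely and transitively (Proposition~\ref{prop:prod_of_trees}). Recall from the discussion preceding Theorem~\ref{thm:k-graph} that $\mathcal{G}(\Gamma)^\mathbf{0} = \mathcal{S}_k(\Gamma)$, and that each vertex $S \in \mathcal{S}_k$ is realised in $\Delta$ by a whole $\Gamma$-orbit of pointed, oriented chambers --- one sitting over each vertex of $\Delta$. By Definition~\ref{def:adjacency_functions} and Figure~\ref{fig:adjacency}, $M_i(X,Y) = 1$ precisely when some realisation of $X$ and some realisation of $Y$ meet along a codimension-one face ``across the $E_i$ direction'', the inequality clauses (ii), (iv) of that definition guaranteeing that $Y$ genuinely passes to the far side of the face rather than folding back onto $X$. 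Thus a directed path in $\mathcal{G}(\Gamma)$ from $S$ to $T$ is exactly a non-backtracking gallery in $\Delta$ from some realisation of $S$ to some realisation of $T$, and the lemma reduces to producing such a gallery.

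First I would fix a realisation $\tilde S$ of $S$ as a chamber of $\Delta$. Because $\Delta$ is a product of trees, its chamber graph (chambers of $\Delta$, adjacent when they share a codimension-one face) is connected: writing a chamber as $(e_1,\ldots,e_k)$ with $e_i$ an edge of $T_i$, a face-move across the $E_i$ direction keeps every coordinate but the $i$-th and pivots $e_i$ about one of its endpoints, so that since each line graph $L(T_i)$ is connected one can match the coordinates of any target chamber one tree at a time. Moreover, any \emph{geodesic} gallery from $\tilde S$ to a chamber $C$ is automatically non-backtracking, since a repeated chamber would allow the gallery to be shortened; hence every chamber of $\Delta$ --- so every $\Gamma$-orbit of chambers, so every vertex of $\mathcal{G}(\Gamma)$ --- is reachable from $\tilde S$ by a non-backtracking gallery. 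Taking $C$ to be a realisation of $T$ gives the required path $S \to \cdots \to T$.

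The step that needs care is that $M_i$ is a \emph{directed} relation: of the two codimension-one faces of a $k$-cube transverse to the $E_i$ direction, $M_i(X,\cdot)$ records gluings across only the one picked out by the basepoint and orientation data of $X$, the reverse gluing being recorded by $M_i^{\mathrm{T}}$. One must therefore check that the non-backtracking gallery above can be chosen so that every step realises $M_i$ in the correct direction, i.e. that the orientation conventions never force a dead end. This is exactly where the standing hypothesis $|E_i| \geq 4$ (equivalently $m_i \geq 2$) --- the \emph{thickness} of $\Delta$ --- is used, just as in Lemmas~\ref{lem:Mi} and~\ref{lem:aperiodicity}: at every vertex of $\Delta$ there are at least four edges in each tree, leaving room at each stage both to avoid the forbidden fold-back and to place the basepoint of the next chamber on the required side, and a bounded number of extra moves then brings the terminal chamber to the precise realisation of $T$ rather than merely to a chamber realising the same geometric cube. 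The remaining verifications are routine passages between the tuple notation of Definition~\ref{def:adjacency_functions} and the geometry of $\Delta$, in the spirit of Lemma~4.1 of \cite{KimRob2002}.
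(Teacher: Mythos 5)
Your reduction to galleries in the building $\Delta$ identifies the right geometric picture, but it leaves unproved exactly the step that carries all of the difficulty, and one intermediate claim is false as stated. A directed path in $\mathcal{G}(\Gamma)$ is \emph{not} the same thing as a non-backtracking gallery between realisations: by Definition \ref{def:adjacency_functions}, $M_i(X,\cdot)$ records crossings of only the one codimension-one face designated as the ``right'' face of $X$ in direction $i$, and the forced pointing of the next chamber designates as \emph{its} right face the face lying further from $X$. Hence a lift of a directed path projects, in each tree $T_i$, to a non-backtracking edge-path in $T_i$, i.e.\ a geodesic moving monotonically away from its starting edge. A geodesic gallery from $\tilde S$ to an arbitrarily placed target chamber will in general have to pivot about the ``wrong'' endpoint in some coordinate, which $M_i$ forbids; so ``every chamber of $\Delta$ is reachable by a non-backtracking gallery'' does not yield reachability in $\mathcal{G}(\Gamma)$. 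This part is repairable --- one only needs to reach \emph{some} $\Gamma$-translate of the target chamber, and transitivity of the action lets one choose a translate lying downstream in every coordinate --- but that substitution has to be made and justified rather than folded into ``routine verifications''.

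The more serious gap is the final sentence, ``a bounded number of extra moves then brings the terminal chamber to the precise realisation of $T$'': this is precisely the content of the lemma and cannot be obtained from thickness alone. Reaching the correct element of $\mathcal{S}_k$, rather than another pointed cube supported on the same geometric chamber, amounts to connecting $T$ to its repointings such as $T_H$ --- and the one-step move from $T_H$ to $T$ is exactly the fold-back that the inequality clauses of Definition \ref{def:adjacency_functions} exclude, so no single ``extra move'' can do it. The paper's proof spends most of its length here: it walks forward in the $E_1$ direction recording the successive right faces $T_0^R, T_1^R, \ldots$, uses finiteness and the evenness of $|E_1|$ to find a repetition $T_q^R = T_p^R$, and only then turns around through the reflected cubes $(T_q)_H, (T_{q-1})_H, \ldots$ to land on $S_H$; a counting argument via \textbf{C3} then shows that all $\prod_i |E_i| = |\mathcal{S}_k|$ pointed cubes are reached. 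You need to supply an argument of this kind (or some other explicit mechanism for changing the pointing); as written, the proposal asserts the conclusion at its crux.
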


We give a geometric proof, based on the cube complex $\mathcal{M}(\Gamma)$, although we point out that this can also be proved in the manner of Lemma 4.2 in \cite{KimRob2002}.

\begin{proof}
	Let $\Gamma$ be a $k$-cube group with adjacency structure $E_1,\ldots , E_k$, and consider a pointed $k$-cube $S$ in the cube complex $\mathcal{M}(\Gamma)$. Let $S_H$ be the $k$-cube obtained by reflecting $S$ through the edges labelled by elements of $E_1$, leaving the basepoint and orientation the same as in $S$ (Figure \ref{fig:cube_symmetries}).
	
	Firstly, we aim to show that there is a sequence of $k$-cubes $S = T_0, T_1, \ldots , T_n = S_H$ such that $M_1(T_j,T_{j+1}) = 1$ for all $j$, that is, such that each $k$-cube is $E_1$-adjacent to the next.
	
	Each $k$-cube $X$ in $\mathcal{M}(\Gamma)$ contains two $(k-1)$-faces (that is, $(k-1)$-sub-cubes) labelled by elements of $E_2,\ldots , E_k$. Since the $k$-cubes have a predetermined orientation, we may label these faces $X^L$ and $X^R$, such that $M_1(X,Y) = 1$ if and only if $Y^L = X^R$ and $Y \neq X_H$. We may therefore assign to each $k$-cube $X$ the pair $(X^L,X^R)$ such that, in any sequence $(T_i)$ of adjacent cubes, $T_{i+1}^L = T_i^R$, and $T_{i+1} \neq (T_i)_H$, for all $i$. 
	
	Observe that each $(k-1)$-cube appears as $X^L$ (resp. $X^R$) for some $X \in \mathcal{S}_k$ precisely $|E_1|$ times, and that, by assumption, $|E_1| \geq 4$. Consider $S^R$, and let $T_1$ be a pointed, oriented $k$-cube which is $E_1$-adjacent to $S$; such a cube exists by the above observation. If $T_1^R = S^R$, then $M_1(T_1, S_H) = 1$ and we are done. 
	
	Assume then that $T_1^R \neq S^R$, and let $T_2$ be $E_1$-adjacent to $T_1$. If $T_2^R = S^R$, then $M_1(T_2, S_H) = 1$, and if $T_2^R = T_1^R$, then $M_1(T_2, (T_1)_H) = 1$, and $M_1((T_1)_H , S_H) = 1$. In both cases, we have constructed a sequence of adjacent $k$-cubes linking $S$ to $S_H$.
	
	If $T_q^R = T_p^R$ for any $p < q$, then we obtain the sequence we desire. But also, by the fact that each $(k-1)$-cube appears as $X^R$ for some $X \in \mathcal{S}_k$ an even number of times, there must be some $q > p$ for which $T_q^R = T_p^R$. Hence such a sequence exists, and there is a path connecting the vertices labelled $S$ and $S_H$ in $\mathcal{G}(\Gamma)$.
	
	In the same manner, we may show that there is a sequence of adjacent $k$-cubes connecting each $S \in \mathcal{S}_k$ to each of its symmetries, i.e., the $k$-cubes which belong to the same orbit as $S$ under action by the symmetry group of the $k$-dimensional cube.
	
	Now, we construct the set $\mathcal{P}$ of all $k$-cubes which can be reached by a sequence of adjacent $k$-cubes (in any sequence of directions) from an initial $k$-cube $S$. Certainly $S_H$ is in $\mathcal{P}$, by the above. Moreover, by virtue of Proposition \ref{prop:clique}, $\mathcal{P}$ contains $(|E_1|-1)$-many more \textit{distinct} $k$-cubes which are $E_1$-adjacent to $S$, to total $|E_1|$ distinct $k$-cubes. Each of these $k$-cubes is $E_2$-adjacent to $|E_2|$ $k$-cubes by the same argument. These are distinct from each other by the uniqueness property of \textbf{C3}.
	
	We may proceed inductively to find that $\mathcal{P}$ must contain at least $\prod_{i=1}^k |E_i|$ distinct $k$-cubes, but this is precisely $|\mathcal{S}_k| = |\mathcal{G}(\Gamma)^\mathbf{0}|$. Hence each $k$-cube in $\mathcal{S}_k$ can be reached from some $k$-cube $S$ by a sequence of adjacent $k$-cubes. Equivalently, given any vertex labelled by $S$ in $\mathcal{G}(\Gamma)^\mathbf{0}$, there is a path from $S$ to every other vertex.	
\end{proof}

\begin{thm}\label{thm:determined_by_K}
	Let $\Gamma$ be a $k$-cube group with adjacency structure $E_1,\ldots , E_k$. Then $\mathcal{A}(\Gamma) := \mathcal{A}(\mathcal{G}(\Gamma))$ is separable, nuclear, purely-infinite, simple, and satisfies the Universal Coefficient Theorem. Hence $\mathcal{A}(\Gamma)$ is completely determined by its K-groups and the class of the identity of $\mathcal{A}(\Gamma)$ in $K_0$, up to isomorphism.
\end{thm}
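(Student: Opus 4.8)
The plan is to verify the five structural properties in the statement one at a time and then quote the Kirchberg--Phillips theorem; in effect most of the work has already been carried out in Lemmas \ref{lem:aperiodicity}, \ref{lem:connected} and \ref{lem:Mi}.

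First, separability, unitality, nuclearity and the UCT. The category $\mathcal{G}(\Gamma)$ is countable, so the generating set $\{s_\lambda\}$ of Definition \ref{def:graph_algebra} is countable and $\mathcal{A}(\Gamma)$ is separable; since $\mathcal{G}(\Gamma)^{\mathbf{0}} = \mathcal{S}_k(\Gamma)$ is a finite set, $\sum_{v \in \mathcal{S}_k(\Gamma)} s_v$ is an identity, so $\mathcal{A}(\Gamma)$ is unital. By Theorem \ref{thm:k-graph} the graph $\mathcal{G}(\Gamma)$ is row-finite with no sources, so it admits the usual \'etale groupoid model; that groupoid is amenable, which gives nuclearity and, via the Rosenberg--Schochet theorem \cite{RosSch1987}, the UCT \cite{KumPas2000}.

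Next, simplicity. Lemma \ref{lem:connected} says that any two vertices of $\mathcal{G}(\Gamma)$ are joined by a path, so $\mathcal{G}(\Gamma)$ is cofinal; Lemma \ref{lem:aperiodicity} supplies the Aperiodicity Condition. Feeding these two inputs into the simplicity criterion for $C^\star$-algebras of row-finite $k$-graphs with no sources \cite{KumPas2000} shows that $\mathcal{A}(\Gamma)$ is simple. For pure infiniteness I would use that a simple $k$-graph $C^\star$-algebra is either AF or purely infinite, and rule out the AF case. By Lemma \ref{lem:Mi} every adjacency matrix $M_i$ has at least three nonzero entries in each row, and the proof of Lemma \ref{lem:connected} shows $\mathcal{G}(\Gamma)$ is in fact strongly connected; combining these, from any vertex $S$ one can build a return path to $S$ passing through a vertex having two distinct outgoing edges of the same colour, i.e.\ a cycle with an entrance. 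Hence $\mathcal{G}(\Gamma)$ is not AF and $\mathcal{A}(\Gamma)$ is purely infinite (alternatively, one verifies local contractivity of $\mathcal{G}(\Gamma)$ directly from the same branching). With separability, nuclearity, unitality, simplicity, pure infiniteness and the UCT all in hand, the Kirchberg--Phillips classification \cite{KirPrep}, \cite{Phi2000} gives the final assertion: $\mathcal{A}(\Gamma)$ is determined up to isomorphism by $K_0(\mathcal{A}(\Gamma))$, $K_1(\mathcal{A}(\Gamma))$ and the class $[1]$ in $K_0$.

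I expect the only non-routine step to be pure infiniteness: separability, unitality, nuclearity, the UCT and simplicity are all bookkeeping on top of the cited results together with Lemmas \ref{lem:aperiodicity}--\ref{lem:connected}, whereas pure infiniteness needs an honest combinatorial argument producing a cycle with an entrance at every vertex (equivalently, verifying $\mathcal{G}(\Gamma)$ is not AF). A minor additional care point is to align the precise hypotheses ``row-finite'' and ``no sources'' with whichever formulation of the simplicity and pure-infiniteness criteria one cites, since several slightly different versions circulate in the literature.
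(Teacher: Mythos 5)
Your proposal follows the paper's argument almost exactly: simplicity from Lemma \ref{lem:connected} (cofinality) and Lemma \ref{lem:aperiodicity} via Proposition 4.8 of \cite{KumPas2000}; separability, nuclearity, unitality and the UCT from the fact that $\mathcal{G}(\Gamma)$ is a row-finite $k$-graph with no sources (the paper simply cites \cite{Eva2008} for this rather than unwinding the groupoid model); and then Kirchberg--Phillips. The one place you genuinely diverge is pure infiniteness. You propose to invoke the dichotomy ``a simple $k$-graph $\cst$-algebra is either AF or purely infinite'' and rule out the AF case; that dichotomy is only cleanly established in the literature for $k=2$ (Evans--Sims), so as a citation it is shakier than what the paper does. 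The paper instead applies Proposition 4.9 of \cite{KumPas2000} directly: using Lemma \ref{lem:connected} and $|E_i|\geq 4$, every vertex $S$ connects to a loop (one finds $\lambda,\mu$ with $d(\mu)\neq\mathbf{0}$, $r(\lambda)=S$, $s(\lambda)=r(\mu)=s(\mu)$), which together with aperiodicity and simplicity yields pure infiniteness. The combinatorial content you identify --- strong connectivity plus branching at every vertex --- is exactly what verifies the hypothesis of that proposition, so your argument is repaired simply by routing it through Kumjian--Pask's criterion rather than the dichotomy; with that substitution the two proofs coincide.
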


\begin{proof}
	By Lemma \ref{lem:connected} and Proposition 4.8 in \cite{KumPas2000}, it follows that $\mathcal{A}(\Gamma)$ is simple. Also by Lemma \ref{lem:connected}, together with the fact that $|E_i| \geq 4$ for all $i$, it follows that for every $S \in \mathcal{G}(\Gamma)^\mathbf{0}$ we can find $\lambda, \mu \in \mathcal{G}(\Gamma)$ such that $d(\mu) \neq \mathbf{0}$, $r(\lambda) = S$, and $s(\lambda) = r(\mu) = s(\mu)$. Hence by Proposition 4.9 in \cite{KumPas2000} it follows that $\mathcal{A}(\Gamma)$ is purely-infinite.
	
	From Theorem \ref{thm:k-graph} we know that $\mathcal{G}(\Gamma)$ is a row-finite $k$-graph with no sources, and in \cite{Eva2008} it is shown that such a $k$-graph has a corresponding $\cst$-algebra which is separable, nuclear, unital, and satisfies the Universal Coefficient Theorem, hence we are done.
\end{proof}

\begin{prop} \label{prop:ord_of_id}
	Let $\Gamma$ be a $k$-cube group with adjacency structure $E_1, \ldots , E_k$, where $|E_i| = m_i$, and define $\rho := \gcd \lbrace (m_i/2) - 1 \mid 1 \leq i \leq k \rbrace$. Then the order of the class of $\id \in \mathcal{A}(\Gamma)$ in $K_0(\mathcal{A}(\Gamma))$ divides $\rho$.
\end{prop}

\begin{proof}
	The proof of Proposition 5.4 in \cite{KimRob2002} generalises well to the $k$-dimensional cube complexes discussed in this paper.
\end{proof}

\begin{conj}[HK-Conjecture, Matui 2016] \label{matui}
	Let $\mathcal{G}$ be a groupoid which satisfies the conditions of \cite[3.5]{Mat2016}. Then there is an isomorphism
	\[
	K_*(\cst_r(\mathcal{G})) \cong \bigoplus_{p=0}^\infty H_{2p + \epsilon}(\mathcal{G}).
	\]
\end{conj}

\begin{rem}
	Matui's \textit{HK-Conjecture} is known to be false in general (c.f. \cite{Sca2020}), but were it to be verified in the case of $k$-cube groups and their graph $\cst$-algebras, we would be able to refine this result to that of Conjecture \ref{conj:ord_of_id}.
\end{rem}

\begin{conj} \label{conj:ord_of_id}
	Let $\Gamma$ be a $k$-cube group with adjacency structure $E_1, \ldots , E_k$, where $|E_i| = m_i$, and define $\rho := \gcd \lbrace (m_i/2) - 1 \mid 1 \leq i \leq k \rbrace$. Factorise $\rho$ as $2^q r$, where $r$ is an odd number: if $\rho$ is odd then $q = 0$. Then the order of the class of $\id \in \mathcal{A}(\Gamma)$ in $K_0(\mathcal{A}(\Gamma))$ is at most $\rho$, and is:
	\begin{enumerate}[label=(\roman*)]
		\item Equal to $\rho$ if $\rho$ is odd,
		\item Divisible by $\rho / (2^{q})$ if $1 \leq q < (k-1)$,
		\item Divisible by $\rho / (2^{k-1})$ if $q \geq (k-1)$.
	\end{enumerate}
\end{conj}

Given a higher-rank graph $\Lambda$, the sum of all elements of $\mathcal{A}(\Lambda)$ of the form $s_v$, where $v \in \Lambda^{\mathbf{0}}$, is an identity for $\mathcal{A}(\Lambda)$ (e.g. \cite[Remark 3.4]{RobSte1999}). Hence for a $k$-cube group $\Gamma$, the sum	$\sum_{S \in \mathcal{S}(\Gamma)} s_S$ forms an identity in $\mathcal{A}(\Gamma)$. Recall the map
\[
\del_1 : \mathbb{Z}\Gamma^\mathbf{0} \longrightarrow \bigoplus_{i=1}^k \mathbb{Z}\Gamma^\mathbf{0}
\]
defined by the matrix $\big[ I - M_1^T, \ldots , I-M_k^T \big]$. The \textit{Covariance Relation} of \cite[\S 5]{KimRob2002} generalises to $k$-graphs, and so from \cite{RobSte2001} and \textit{Matui's Conjecture} it would follow that the map
\[
\varphi : \coker(\del_1) = \bigg\langle S \in \mathcal{S}_k \biggm\vert \sum_{T \in \mathcal{S}} M_i(S,T) \cdot S \bigg\rangle \longrightarrow K_0(\mathcal{A}(\Gamma))
\] 
which takes $S$ to its class $[S]$ is injective. But each column of $M_i$ has exactly $(m_i - 1)$ ones, the rest of the entries being zero, and so $\Sigma = (m_i - 1)\Sigma$ for each $i \in \lbrace 1, \ldots , k \rbrace$, where $\Sigma := \sum_{S \in \mathcal{S}_k}$. Since $\sum_{S \in \mathcal{S}_k} s_S$ is an identity in $\mathcal{A}(\Gamma)$, the class $[\id] \in K_0$ is the image of $\Sigma$ under $\varphi$; By the above, we also know that $(m_i - 2)\Sigma$ is zero for each $i$.

Write $2 \rho = \gcd\lbrace m_i - 2 \rbrace$, and define the map $\psi : \coker(\del_1) \rightarrow \mathbb{Z}/2\rho$ by $\psi(S) = 1 \mod 2\rho$, as in the proof of \cite[Prop. 5.4]{KimRob2002}. Now,
\[
\prod_i (m_i - 2) = \bigg(\prod_i m_i \bigg) - 2^k \mod 2\rho,
\]
and since $(m_i - 2) = 0 \mod 2\rho$, this means that $\psi(\Sigma) = 2^k \mod 2\rho$, and so $\rho \cdot \psi(\Sigma) = 0 \mod 2\rho$. If $\rho$ is odd, then $\psi(\Sigma)$ has order $\rho$ in $\mathbb{Z}/2\rho$. If $\rho$ is even, then $\rho = 2^q r$ for some odd number $r$, and $\rho \cdot \psi(\Sigma) = 2^{k+q} r \mod (2^{q+1} r)$. Hence the order of $\Sigma$ in $\coker(\del_1)$ is divisible by $\rho$ in the former case, and by $\max\lbrace \rho/(2^q), \rho/(2^{k-1})\rbrace$ in the latter.

%%%%%%%%%%%%%%%%%%%%%%%%%%%%%%%%%%%%%%%%%%%%%%%%%%%
\section{Examples for $k=3$ and $k=4$}\label{S:examples}
%%%%%%%%%%%%%%%%%%%%%%%%%%%%%%%%%%%%%%%%%%%%%%%%%%%

\begin{ex}\label{ex:3-free2}
	Consider the product of three free groups, each with two generators, defined as follows:
	\[
	\mathbb{F}_2^3 := \langle a_1,a_2,b_1,b_2,c_1,c_2 \mid [a_i,b_j], [a_i,c_j], [b_i,c_j], \text{ for all }i,j \in \lbrace 1,2\rbrace \rangle,
	\]
	where $[x,y]$ denotes the commutator $xyx\inv y\inv$. This is a $3$-cube group with adjacency structure $\lbrace a_i, a_i\inv \rbrace, \lbrace b_i, b_i\inv \rbrace , \lbrace c_i, c_i\inv \rbrace$. We construct the chain complex from Proposition \ref{prop:3-rank_ss} using the three corresponding adjacency matrices, to find that $\coker(\del_1) \cong \mathbb{Z}^8$, and $\ker(\del_2) / \im(\del_3) \cong \ker(\del_1) / \im(\del_2) \cong \mathbb{Z}^{24}$.
	
	Conversely, we can use the \textit{K\"{u}nneth Theorem for tensor products} \cite[\S 9.3]{Weg1993} to calculate the K-theory explicitly up to isomorphism; we have $K_0(\mathcal{A}(\mathbb{F}_2^3)) \cong K_1(\mathcal{A}(\mathbb{F}_2^3)) \cong \mathbb{Z}^{32}$. Then, since all of the groups from Proposition \ref{prop:3-rank_ss} are free Abelian, we are able to deduce that $G_0 = 0$, and $G_1 \cong \mathbb{Z}^8$. This complies with \textit{Matui's HK-Conjecture} \cite{Mat2016}.
\end{ex}

\begin{ex}\label{ex:3-free3}
	Now consider the product $\Gamma$ of three free groups, each with three generators; this is a $3$-cube group whose corresponding cube complex has as universal cover $T(6) \times T(6) \times T(6)$. We again construct the chain complex $\mathcal{D}_3$ from Proposition \ref{prop:3-rank_ss} using the three corresponding adjacency matrices, to find that:
	\begin{itemize}
		\item $\coker(\del_1) \cong \mathbb{Z}^{27} \oplus (\mathbb{Z}/2)^{37}$,
		\item $\ker(\del_2) / \im(\del_3) \cong \mathbb{Z}^{81} \oplus (\mathbb{Z}/2)^{37}$,
		\item $\ker(\del_1) / \im(\del_2) \cong \mathbb{Z}^{81} \oplus (\mathbb{Z}/2)^{74}$,
		\item $\ker(\del_3) \cong \mathbb{Z}^{27}$.
	\end{itemize}
	Hence we have a short exact sequence
	\begin{equation}\label{eq:ses}
		0 \longrightarrow \frac{\mathbb{Z}^{27} \oplus (\mathbb{Z}/2)^{37}}{G_0} \longrightarrow K_0(\mathcal{A}(\Gamma)) \longrightarrow \mathbb{Z}^{81} \oplus (\mathbb{Z}/2)^{37} \longrightarrow 0,
	\end{equation}
	for some $G_0 \subseteq \mathbb{Z}^{27} \oplus (\mathbb{Z}/2)^{37}$, and an isomorphism $K_1(\mathcal{A}(\Gamma)) \cong \mathbb{Z}^r \oplus (\mathbb{Z}/2)^{74}$, where $81 \leq r \leq 108$. We deduce from (\ref{eq:ses}) and the fact that $K_0$ and $K_1$ must have the same torsion-free rank \cite[Prop. 4.1]{Eva2008} that the torsion-free part of $K_0$ is isomorphic to $\mathbb{Z}^r$.
	
	Write $A,B,C$ for the adjacency structure of $\Gamma$. By Lemma \ref{lem:k-1-cube}, the three subgroups of $\Gamma$ isomorphic to $\mathbb{F}_3^2$, obtained by removing one of $A,B,C$ from the generating set, are each $2$-cube groups (or \textit{BM-groups}). The $3$-group $\Gamma$ is a free product with amalgamation of these three groups (Proposition \ref{prop:amalgam}). The K-theory of their induced rank-$k$ graph algebras is given by
	\[
	K_0(\mathbb{F}_3^2) \cong K_1(\mathbb{F}_3^2) \cong \mathbb{Z}^{18} \oplus (\mathbb{Z}/2)^7.
	\]
	Compare this to the K-theory of the rank-$k$ graph algebra induced by $\Gamma$, calculated above to have $K_1(\mathcal{A}(\Gamma)) \cong \mathbb{Z}^r \oplus (\mathbb{Z}/2)^{74}$ and $\rk K_0(\mathcal{A}(\Gamma)) = r$. We discern no obvious structure inherited by the K-theory of $\mathcal{A}(\Gamma)$ from the K-theory induced by its $2$-cube subgroups.
\end{ex}

\begin{ex}\label{ex:26_cubes_pt2}
	Recall the group $\Gamma = \Gamma_{\lbrace 3,5,7\rbrace}$ from Example \ref{ex:26_cubes}. It is readily verifiable that this is a $3$-cube group, and its corresponding cube complex comprises one vertex, $26$ squares labelled by the relators in $R$, and $24$ cubes. We can construct three $192 \times 192$ adjacency matrices $M_1,M_2,M_3$ based on adjacency of pointed cubes in the $\lbrace a_i \rbrace$, $\lbrace b_i \rbrace$, and $\lbrace c_i \rbrace$ directions respectively. We know that $\mathcal{G}(\Gamma)$ is a rank-$3$ graph by Theorem \ref{thm:k-graph}, and so we can input the matrices $M_1,M_2,M_3$ into Proposition \ref{prop:3-rank_ss} to garner information about the algebra $\mathcal{A}(\Gamma)$.
	
	For $\coker(\del_1)$, it suffices to compute the elementary divisors (that is, the diagonal elements of the Smith normal form) of $\del_1$. This is because the cokernel of a linear map is equal to the cokernel of its Smith normal form. The Smith normal form $S(\del_1)$ is a $192 \times 576$ diagonal matrix with entries
	\[
	\underbrace{1, \ldots , 1}_{182 \text{ times}} , 4, 4, 12, \underbrace{0, \ldots , 0}_{7 \text{ times}}.
	\]
	Hence we have $\coker(\del_1) \cong \mathbb{Z}^7 \oplus (\mathbb{Z}/4)^2 \oplus (\mathbb{Z}/12)$, and we are able to work out similarly that $\ker(\del_3) \cong \mathbb{Z}^7$. We also verify using MAGMA that:
	\begin{itemize}
		\item $\ker(\del_2) / \im(\del_3) \cong \mathbb{Z}^{21} \oplus (\mathbb{Z}/4)^2 \oplus (\mathbb{Z}/12)$, and
		\item $\ker(\del_1) / \im(\del_2) \cong \mathbb{Z}^{21} \oplus (\mathbb{Z}/2)^6 \oplus (\mathbb{Z}/ 4)^2 \oplus (\mathbb{Z} / 12)^2$.
	\end{itemize}
	We therefore have a short exact sequence
	\[
	0 \longrightarrow \frac{\mathbb{Z}^7 \oplus (\mathbb{Z}/4)^2 \oplus (\mathbb{Z}/12)}{G_0} \longrightarrow K_0(\mathcal{A}(\Gamma)) \longrightarrow \mathbb{Z}^{21} \oplus (\mathbb{Z}/4)^2 \oplus (\mathbb{Z}/12) \longrightarrow 0,
	\]
	and an isomorphism
	\[
	K_1(\mathcal{A}(\Gamma)) \cong \mathbb{Z}^{21} \oplus (\mathbb{Z}/2)^6 \oplus (\mathbb{Z}/4)^2 \oplus (\mathbb{Z} /12)^2 \oplus G_1,
	\]
	for some $G_0 \subseteq \mathbb{Z}^7 \oplus (\mathbb{Z}/4)^2 \oplus (\mathbb{Z}/12)$ and $G_1 \subseteq \mathbb{Z}^7$. From this and Proposition 4.1 in \cite{Eva2008}, we can deduce that the torsion-free part of $K_0$ is isomorphic to $\mathbb{Z}^r$, and that $K_1 \cong \mathbb{Z}^r \oplus (\mathbb{Z}/2)^6 \oplus (\mathbb{Z}/4)^2 \oplus (\mathbb{Z} /12)^2$, for some $21 \leq r \leq 28$.
	
	We may also compute the cellular homology of the cube complex $\Gamma$; firstly by determining the relevant boundary map matrices. We consider the barycentric subdivision of $\Gamma$, which has the same cellular homology as $\Gamma$, and whose edge set contains no loops. Then, from the Smith normal forms of the boundary maps, we discover that 
	\[
	H_i(\Gamma) \cong 
	\begin{cases}
		\mathbb{Z} & \text{ when } i = 0, \\
		(\mathbb{Z}/2)^2 \oplus (\mathbb{Z}/4)^2 & \text{ when } i = 1, \\
		(\mathbb{Z}/2)^2 \oplus (\mathbb{Z}/12) & \text{ when } i = 2, \\
		\mathbb{Z}^7 & \text{ when } i = 3, \\
		0 & \text{ when } i \geq 4.
	\end{cases}
	\] 
	It is interesting to note the similarities between the cellular homology groups of the cube complex, and the information we have about the K-groups of the $k$-graph algebras.
\end{ex}

\begin{ex}\label{ex:27-cubes}
	Consider the group $\Gamma = \Gamma_{\lbrace 2,3,4 \rbrace}$ from Example 2.36 of \cite{RunStiVdo2019}, defined as follows:
	\[
	\Gamma_{\lbrace 2,3,4\rbrace} := \langle a_1,a_5,a_9,b_2,b_6,b_{10},c_3,c_7,c_{11} \mid R\rangle,
	\]
	where
	\begin{align*}
		R := \big\lbrace &a_1b_2a_5\inv b_{10}\inv , a_1b_6a_9b_{10}, a_1 b_{10} a_9 b_6, a_1b_2\inv a_9\inv b_2\inv ,\\
		&a_1 b_6\inv a_5 b_6\inv , a_1 b_{10}\inv a_5\inv b_2 , a_5 b_2 a_9\inv b_6 , a_5 b_6 a_9\inv b_2 , a_5 b_{10}\inv a_9 b_{10}\inv , \\
		&a_1 c_3 a_5\inv c_3 , a_1c_7a_1\inv c_7\inv , a_1 c_{11} a_9 c_{11} , a_1 c_3\inv a_1 c_{11}\inv ,\\
		&a_5 c_3 a_5 c_7\inv , a_5 c_7 a_9\inv c_7 , a_5 c_{11} a_5\inv c_{11}\inv , a_9 c_3 a_9\inv c_3\inv , a_9 c_7 a_9 c_{11}\inv , \\
		&b_2 c_3 b_6\inv c_{11}\inv , b_2 c_7 b_{10} c_{11}, b_2 c_{11} b_{10} c_7 , b_2 c_3\inv b_{10}\inv c_3\inv ,\\
		&b_2 c_7\inv b_6 c_7\inv , b_2 c_{11}\inv b_6\inv c_3 , b_6 c_3 b_{10}\inv c_7, b_6 c_7 b_{10}\inv c_3 , b_6 c_{11}\inv b_{10} c_{11}\inv \big\rbrace .
	\end{align*}
	This is a $3$-cube group with adjacency structure $\lbrace a_i,a_i\inv\rbrace, \lbrace b_i,b_i\inv \rbrace, \lbrace c_i,c_i\inv\rbrace$, and which acts freely and transitively on $T(6) \times T(6) \times T(6)$, as in Example \ref{ex:3-free3}. The corresponding cube complex $\mathcal{M}$ has one vertex, $27$ squares labelled with the relators in $R$, and $27$ cubes.
	
	We construct the three adjacency matrices, each of which has $216$ rows and columns for the $27$ cubes and each of their eight symmetries (in the manner of Figure \ref{fig:cube_symmetries}). Then, we can use Proposition \ref{prop:3-rank_ss} to reveal information about the K-groups $K_0,K_1$ of the rank-$3$ graph algebra $\mathcal{A}(\Gamma)$.
	
	As in the previous example, we use MAGMA to compute the relevant kernels and cokernels, culminating with:
	\begin{itemize}
		\item $\coker(\del_1) \cong \mathbb{Z}^9 \oplus (\mathbb{Z}/2) \oplus (\mathbb{Z}/20) \oplus (\mathbb{Z}/80)$,
		\item $\ker(\del_3) \cong \mathbb{Z}^9$,
		\item $\ker(\del_2)/\im(\del_3) \cong \mathbb{Z}^{27} \oplus (\mathbb{Z}/2) \oplus (\mathbb{Z}/20) \oplus (\mathbb{Z}/80)$,
		\item $\ker(\del_1) / \im(\del_2) \cong \mathbb{Z}^{27} \oplus (\mathbb{Z}/2)^4 \oplus (\mathbb{Z}/4)^2 \oplus (\mathbb{Z}/8)^2$.
	\end{itemize}
	Then, from Proposition \ref{prop:3-rank_ss} we obtain the short exact sequence
	\[
	0 \rightarrow \frac{\mathbb{Z}^9 \oplus (\mathbb{Z}/2) \oplus (\mathbb{Z}/20) \oplus (\mathbb{Z}/80)}{G_0} \longrightarrow K_0 \longrightarrow \mathbb{Z}^{27} \oplus (\mathbb{Z}/2) \oplus (\mathbb{Z}/20) \oplus (\mathbb{Z}/80) \rightarrow 0,
	\]
	and the isomorphism
	\[
	K_1 \cong \mathbb{Z}^{27} \oplus (\mathbb{Z}/2)^4 \oplus (\mathbb{Z}/4)^2 \oplus (\mathbb{Z}/8)^2 \oplus G_1,
	\]
	where $G_0 \subseteq \mathbb{Z}^9 \oplus (\mathbb{Z}/2) \oplus (\mathbb{Z}/20) \oplus (\mathbb{Z}/80)$ and $G_1 \subseteq \mathbb{Z}^9$. Hence we deduce that the torsion-free part of $K_0$ is isomorphic to $\mathbb{Z}^r$, and $K_1 \cong \mathbb{Z}^r \oplus (\mathbb{Z}/2)^4 \oplus (\mathbb{Z}/4)^2 \oplus (\mathbb{Z}/8)^2$, for some $27 \leq r \leq 36$. The $K_1$ group in particular is distinct from those of Examples \ref{ex:3-free3} and \ref{ex:26_cubes_pt2}, so we may conclude by Theorem \ref{thm:determined_by_K} that the $\cst$-algebras induced by each of the cube complexes are different.
\end{ex}

\begin{rem}
	In each of the examples above, the torsion-free rank $r$ of $K_\ast$ lies within a range of values. If Matui's HK-Conjecture \cite{Mat2016} is true in the case of higher rank graph algebras which arise from $k$-cube groups, then $K_1 \cong H_1(\mathcal{D}_k) \oplus H_3(\mathcal{D}_k)$, and so $r$ must be maximal in this range.
\end{rem}

\begin{ex}
	Consider the product of four free groups, each with two generators, defined as follows:
	\begin{multline*}
		\mathbb{F}_2^4 := \langle a_1,a_2,b_1,b_2,c_1,c_2,d_1,d_2 \mid [a_i,b_j], [a_i,c_j], [a_i,d_j], \\ [b_i,c_j], [b_i,d_j], [c_i,d_j] \text{ for all }i,j \in \lbrace 1,2\rbrace \rangle.
	\end{multline*}
	This is a $4$-cube group with adjacency structure $\lbrace a_i, a_i\inv \rbrace, \ldots , \lbrace d_i, d_i\inv \rbrace$. We construct the chain complex from Proposition \ref{prop:4-rank_ss} using the four corresponding adjacency matrices, and obtain short exact sequences:
	\begin{enumerate}[label=(\roman*)]
		\item $0 \longrightarrow \mathbb{Z}^{16}/G_0 \longrightarrow K_0(\mathcal{A}(\mathbb{F}_2^4)) \longrightarrow \dfrac{K_0(\mathcal{A}(\mathbb{F}_2^4))}{\mathbb{Z}^{16}/G_0} \longrightarrow 0$,
		
		\item $0 \longrightarrow \mathbb{Z}^{16}/G_0 \longrightarrow F_2 \longrightarrow \mathbb{Z}^{96} \longrightarrow 0$,
		
		\item $0 \longrightarrow F_2 \longrightarrow K_0(\mathcal{A}(\mathbb{F}_2^4)) \longrightarrow G_1 \longrightarrow 0$,
		
		\item $0 \longrightarrow \mathbb{Z}^{64}/G_2 \longrightarrow K_1(\mathcal{A}(\mathbb{F}_2^4)) \longrightarrow G_3 \longrightarrow 0$,
	\end{enumerate}
	for some subgroups $G_0, G_1 \subseteq \mathbb{Z}^{16}$, and $G_2,G_3 \subseteq \mathbb{Z}^{64}$. Compare this with the values obtained for $K_0$ and $K_1$ by the K\"{u}nneth Theorem for tensor products, a result we are able to use due to the groups $\coker(\del_1)$ and $\ker(\del_4)$ being torsion free. It gives $K_0(\mathcal{A}(\mathbb{F}_2^4)) \cong K_1(\mathcal{A}(\mathbb{F}_2^4)) \cong \mathbb{Z}^{128}$.
	
	Each of the groups in the sequences above is free Abelian, so we can use this information to deduce from (iv) that $G_2 = 0$, $G_3 \cong \mathbb{Z}^{64}$, and $G_0 \oplus G_1 \cong \mathbb{Z}^{16}$.
\end{ex}

\begin{ex}
	Consider the group $\Gamma = \Gamma_{\lbrace 1,2,3,4 \rbrace}$, found as a result of \cite{RunStiVdo2019}, and defined as follows:
	\[
	\Gamma_{\lbrace 1,2,3,4\rbrace} := \langle a_1,a_2,a_3,b_1,b_2,b_3,c_1,c_2,c_3,d_1,d_2,d_3 \mid R\rangle,
	\]
	where
	\begin{align*}
		R := \big\lbrace &a_1 b_1 a_3\inv b_1, a_1 b_1\inv a_2\inv b_3, a_1 b_2 a_2 b_2, a_1 b_2\inv a_3 b_3\inv, \\
		&a_1 b_3 a_2\inv b_1\inv, a_1 b_3\inv a_3 b_2\inv, a_2 b_3 a_3 b_3, a_3 b_1 a_2\inv b_2, a_3 b_2 a_2\inv b_1 \\
		&a_1 c_1 a_2\inv c_1, a_1 c_1\inv a_1 c_3\inv, a_1 c_2 a_1\inv c_2\inv, a_1 c_3 a_3 c_3, \\
		&a_2 c_1 a_2 c_2\inv, a_2 c_2 a_3\inv c_2, a_2 c_3 a_2\inv c_3\inv, a_3 c_1\inv a_3\inv c_1, a_3 c_2 a_3 c_3\inv \\	
		&a_1 d_1 a_3\inv d_3, a_1 d_1\inv a_2 d_2, a_1 d_2 a_2 d_1\inv, a_1 d_2\inv a_1 d_3\inv, \\
		&a_1 d_3 a_3\inv d_1, a_2 d_1 a_2 d_3\inv, a_2 d_2\inv a_3 d_3, a_2 d_3 a_3 d_2\inv, a_3 d_1 a_3 d_2 \\	
		&b_1 c_1 b_3\inv c_1, b_1 c_1\inv b_2\inv c_3, b_1 c_2 b_2 c_2, b_1 c_2\inv b_3 c_3\inv, \\
		&b_1 c_3 b_2\inv c_1\inv, b_1 c_3\inv b_3 c_2\inv, b_2 c_3 b_3 c_3, b_3 c_1 b_2\inv c_2, b_3 c_2 b_2\inv c_1, \\	
		&b_1 d_1 b_2\inv d_1, b_1 d_1\inv b_1 d_3\inv, b_1 d_2 b_1\inv d_2\inv, b_1 d_3 b_3 d_3, \\
		&b_2 d_1 b_2 d_2\inv, b_2 d_2 b_3\inv d_2, b_2 d_3 b_2\inv d_3\inv, b_3 d_1\inv b_3\inv d_1, b_3 d_2 b_3 d_3\inv \\	
		&c_1 d_1 c_3\inv d_1, c_1 d_1\inv c_2\inv d_3, c_1 d_2 c_2 d_2, c_1 d_2\inv c_3 d_3\inv, \\
		&c_1 d_3 c_2\inv d_1\inv, c_1 d_3\inv c_3 d_2\inv, c_2 d_3 c_3 d_3, c_3 d_1 c_2\inv d_2, c_3 d_2 c_2\inv d_1 \big\rbrace .
	\end{align*}
	We have written a program in Python which determines whether a group is a $4$-cube group, and if so, outputs four adjacency matrices. In this example, $\Gamma$ is a $4$-cube group, but the adjacency matrices are very large. MAGMA has a limit on the dimensions of the input, so we are currently exploring other languages and ways around the bounds of the software.
	
	This example underlines the complexity in computing K-groups of $k$-graph algebras beyond $k=3$, though we think it is nevertheless interesting to have a source of higher-rank graphs of arbitrary rank to draw from, which arise in a concrete way.
\end{ex}

%%%%%%%%%%%%%%%%%%%%%%%%%%%%%%%%%%%%%%%%%%%%%%%%%%%
\section{Higher-rank graphs arising as double covers of cube complexes}\label{S:two_vertices}
%%%%%%%%%%%%%%%%%%%%%%%%%%%%%%%%%%%%%%%%%%%%%%%%%%%

In this section, we deduce information about the K-theory of a certain class of rank-$k$ graphs with two vertices, which arise as double covers of the cube complexes discussed above. Whereas the rank-$k$ graphs $\mathcal{G}(\Gamma)$ had vertices labelled by the $k$-cubes of $\Gamma$, these rank-$k$ graphs $\Lambda$ have vertices labelled by the vertices $(v,0), (v,1)$ of the double cover of the complex $\mathcal{M}(\Gamma)$, where $v$ is the single vertex of $\mathcal{M}(\Gamma)$. For further detail on how these graphs arise, we direct the reader to \cite[\S 8]{LawSimVdoPrep}, but we outline the process here.

Recall the cube complex $\mathcal{M}(\Gamma)$, which has one vertex $v$, and has as cover the product of $k$ trees by Proposition \ref{prop:prod_of_trees} and Example \ref{ex:M}. Let $\ell : \Gamma \rightarrow \mathbb{Z}/2$ be a labelling of the elements of $\Gamma$. Then we obtain a cover $\tilde{\mathcal{M}}^2$ of $\mathcal{M}(\Gamma)$ with vertex set $\lbrace v \rbrace \times (\mathbb{Z}/2)$, and edge set $\Gamma \times (\mathbb{Z}/2)$. In $\tilde{\mathcal{M}}^2$, for a given element $a \in \Gamma$, either $(a,0)$ and $(a,1)$ are loops based at $(v,0)$, $(v,1)$ respectively, or $(a,0)$ goes from $(v,1)$ to $(v,0)$ and $(a,1)$ from $(v,0)$ to $(v,1)$.

We therefore have the following construction. For some non-negative integers $m_1,\ldots , m_k$ and $n_1,\ldots , n_k$, not necessarily distinct, consider the matrices
\[
D_i := \begin{bmatrix}
	2m_i & 0 \\ 0 & 2m_i
\end{bmatrix}, \qquad
T_i := \begin{bmatrix}
	0 & 2n_i \\ 2n_i & 0
\end{bmatrix}.
\]
For each $i \in \lbrace 1,\ldots , k\rbrace$, we let $M_i$ equal one of $D_i$ or $T_i$. Provided that we have at least one $T_i$, we can construct a rank-$k$ graph $\Lambda$ with our choices of $M_i$ as incidence matrices (Figure \ref{fig:2-verts}). Such rank-$k$ graphs are clearly cofinal and satisfy the Aperiodicity Condition, so their graph $\cst$-algebras are uniquely determined by their K-theory.

Our aim is to apply Evans' result (Proposition \ref{prop:3-rank_ss} above) to compute the K-theory of the corresponding rank-$3$ graph algebras $\Lambda$ in each case, for all $m_i, n_i \geq 2$. To do this, we must investigate the Smith normal forms of the boundary maps $\del_i$. We do so firstly for $k=3$, in order to illustrate the method for arbitrary $k$.

\begin{lem}\label{lem:DT_3}
	Let $k=3$, and let $\Lambda$ be one of the three possible rank-$3$ graphs constructed as above, namely one with incidence matrices
	\begin{enumerate}[label=(\alph*)]
		\item $M_1 = T_1$, $M_2 = D_2$, $M_3 = D_3$,
		\item $M_1 = T_1$, $M_2 = T_2$, $M_3 = D_3$,
		\item $M_1 = T_1$, $M_2 = T_2$, $M_3 = T_3$,
	\end{enumerate}
	up to reordering. Write
	\[
	a_i := \begin{cases}
		1 - 2m_i & \text{ whenever } M_i = D_i,\\
		1 - 4n_i^2 & \text{ whenever } M_i = T_i.
	\end{cases}
	\]
	Recall the matrices $\del_1,\del_2,\del_3$ from Proposition \ref{prop:3-rank_ss}, and write $I_2, 0_2$ to denote the $2 \times 2$ identity and zero matrix, respectively.  Then the Smith normal forms of the $\del_i$ are given by
	\[
	S(\del_1) = S(\del_3)^T = \begin{bmatrix}
		1&0&0&0&0&0 \\ 0&g&0&0&0&0
	\end{bmatrix}, \quad \text{and}\quad
	S(\del_2) = \begin{bmatrix}
		I_2 & 0_2 & 0_2 \\
		0_2 & g I_2 & 0_2 \\
		0_2 & 0_2 & 0_2
	\end{bmatrix},
	\] 
	where $g := \gcd(a_1,a_2,a_3)$.
\end{lem}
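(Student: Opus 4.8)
The plan is to compute each $S(\del_i)$ directly by elementary integer row and column operations, exploiting how rigid the blocks $I - M_i^T = I - M_i$ are: if $M_i = D_i$ then $I - D_i = a_i I_2$, while if $M_i = T_i$ then $I - T_i = \left[\begin{smallmatrix} 1 & -2n_i \\ -2n_i & 1\end{smallmatrix}\right]$, which has determinant $1 - 4n_i^2 = a_i$ and a unit entry. So $\det(I - M_i) = a_i$ in every case, and in each configuration (a)--(c) at least one block is of type $T$ and hence supplies an entry equal to $1$.

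I would dispatch $\del_1$ and $\del_3$ together: $\del_3^{\,T}$ is obtained from $\del_1$ by permuting block-columns and negating one of them --- both unimodular --- so $S(\del_3) = S(\del_1)^T$ once $S(\del_1)$ is known. For the $2 \times 6$ matrix $\del_1$, the unit from a $T$-block makes the first invariant factor the $\gcd$ of all entries, namely $1$; pivoting on that entry clears its row and column and reduces $\del_1$ to $\operatorname{diag}(1, v)$ with $v \in \mathbb{Z}^5$, so the second invariant factor is $\gcd(v) = \gcd$ of the $2\times2$ minors of $\del_1$. I would enumerate those minors blockwise --- the two columns of a $T_i$-block give $a_i$; of a $D_j$-block give $a_j^2$; a $T_i$-column paired with a $D_j$-column gives $\pm a_j$ or $\pm 2n_i a_j$; and two columns from distinct $T$-blocks give expressions $\pm2(n_i \pm n_j)$ or $\pm(1 - 4n_i n_j)$ --- and then check their $\gcd$ equals $g = \gcd(a_1, a_2, a_3)$, which gives the stated $S(\del_1)$.

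For the $6 \times 6$ matrix $\del_2$ I would first note that $\mathcal{D}_3 \otimes \mathbb{Q}$ is exact (over $\mathbb{Q}$ each $I - M_i$ is invertible, so $\del_1$ is onto and $\del_3$ is injective, and the middle homology vanishes --- most transparently via the eigenline split below), whence $\rk \del_2 = 4$ and $S(\del_2)$ has four nonzero diagonal entries and two zeros, the first again being $1$. Rather than grind through the $3\times3$ and $4\times4$ minors I would put $\del_1, \del_3$ into Smith form first and use $\del_1 \del_2 = 0 = \del_2 \del_3$ to make $\del_2$ block-triangular in the new bases with off-diagonal part governed by the $a_i$, then read off $S(\del_2) = \left[\begin{smallmatrix} I_2 & 0_2 & 0_2 \\ 0_2 & gI_2 & 0_2 \\ 0_2 & 0_2 & 0_2\end{smallmatrix}\right]$.

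The hard part will be controlling the minors in configurations (b) and (c), where cross-minors $\pm2(n_i - n_j)$ between two distinct $T$-blocks enter and could a priori lower the $\gcd$: one must check that any common divisor of $a_i$ and $a_j$ already divides $2(n_i - n_j)$. The robust way to organise this --- and the fallback if that divisibility needs a hypothesis --- is to split $\mathbb{Z}\Lambda^\mathbf{0} = \mathbb{Z}^2$ into the $(1,1)$- and $(1,-1)$-eigenlines of the vertex-swap involution, on which $I - T_i$ acts by the scalars $1 - 2n_i$ and $1 + 2n_i$ (and $I - D_i$ by $a_i$ on both); running the whole argument on each eigenline reduces everything to scalar computations, at the cost of tracking the factors of $2$ introduced by the non-unimodular change of basis $\left[\begin{smallmatrix}1 & 1 \\ 1 & -1\end{smallmatrix}\right]$.
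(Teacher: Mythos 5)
Your strategy for $\del_1$ and $\del_3$ is the same as the paper's (the product of the first two invariant factors is the gcd of the $2\times 2$ minors, and the unit diagonal entry of $I-T_i$ forces the first factor to be $1$), and your route to $S(\del_2)$ via rational exactness and $\del_1\del_2=\del_2\del_3=0$ is a reasonable substitute for the paper's direct computation of the $4\times 4$ minors. But the divisibility you single out as ``the hard part'' --- that any common divisor of $a_i$ and $a_j$ divides $2(n_i-n_j)$ when both blocks are of type $T$ --- genuinely fails, and nothing in the construction excludes this (the $n_i$ are explicitly allowed to be distinct). Take case (c) with $n_1=n_3=2$ and $n_2=3$: then $a_1=a_3=-15$, $a_2=-35$, so $g=5$, yet columns $1$ and $3$ of $\del_1$, namely $(1,-4)^T$ and $(1,-6)^T$, give a $2\times 2$ minor equal to $-2$; hence the gcd of the $2\times 2$ minors is $1$ and $S(\del_1)=\mathrm{diag}(1,1,0,0,0,0)$, not $\mathrm{diag}(1,5,0,0,0,0)$. (Reducing $\del_1$ by hand leaves the row $(-15,-2,-23,0,-15)$, of gcd $1$.) So your proof cannot be completed as written --- but neither can the paper's, which works out only case (a) in detail (there every minor meeting a $D$-block is a multiple of the corresponding $a_j$, and the sole remaining minor is $a_1$, so the gcd really is $g$) and dismisses (b) and (c) as ``shown in a similar manner.'' The lemma as stated is false in those cases, and Propositions \ref{prop:DT_3}, \ref{prop:DT} and \ref{prop:DT_4} inherit the problem.

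Your fallback is the correct repair and exposes what the true statement should be. Conjugating by $\big[\begin{smallmatrix}1&1\\1&-1\end{smallmatrix}\big]$ splits $\mathcal{D}_3$ into two scalar (Koszul-type) complexes, with scalars $1-2n_i$ (resp.\ $1+2n_i$) on the two eigenlines for a $T$-block and $1-2m_i$ on both for a $D$-block; since every $a_i$ is odd, the determinant $-2$ of the base change is harmless for the torsion in question, and one gets $\coker(\del_1)\cong \mathbb{Z}/g^-\oplus\mathbb{Z}/g^+$ with $g^{\mp}$ the gcd of the corresponding lists of scalars. In case (a) the factors $1-2n_1$ and $1+2n_1$ are coprime, so this collapses to $\mathbb{Z}/g$ and recovers the lemma; in cases (b) and (c) it agrees with the lemma only under extra hypotheses (e.g.\ all $n_i$ equal), because $\gcd_i\big((1-2n_i)(1+2n_i)\big)$ can acquire a prime from \emph{different} eigenlines of \emph{different} blocks, exactly as in the example above. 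I would recommend writing up the eigenline computation as the actual proof and correcting the statement of the lemma accordingly.
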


\begin{figure}[h]
	\begin{center}
		\includegraphics[scale=0.7]{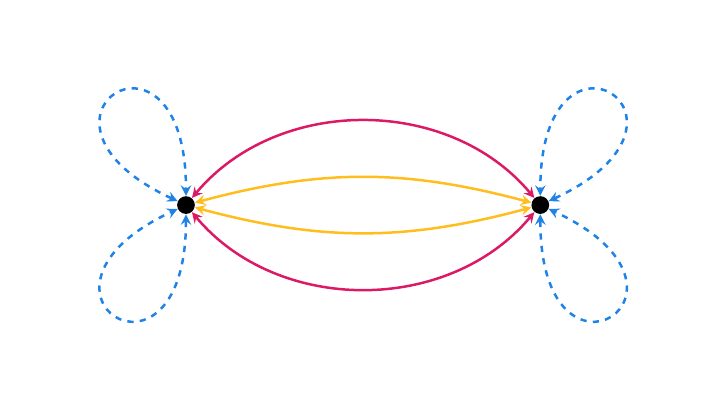}
	\end{center}
	\caption[.]{The ($1$-skeleton of the) $3$-graph with two vertices and incidence matrices $\big[ \begin{smallmatrix}
			4 & 0 \\ 0 & 4
		\end{smallmatrix} \big]$, 
		$\big[ \begin{smallmatrix}
			0 & 2 \\ 2 & 0
		\end{smallmatrix} \big]$, and 
		$\big[ \begin{smallmatrix}
			0 & 2 \\ 2 & 0
		\end{smallmatrix} \big]$, respectively represented by blue, magenta, and yellow arrows.}\label{fig:2-verts} 
\end{figure}

The proof relies on an argument based on the \textit{Cauchy--Binet Theorem}: for a matrix $\del$ of rank $r$, the product of the invariant factors of the Smith normal form $S(\del)$ is equal to the greatest common divisor of all of the determinants of the $r \times r$ minors of $\del$.

\begin{proof}	
	Consider the matrices $\del_1, \del_2, \del_3$ in each of the three cases.
	In case (a), we have
	\[
	\del_1 = \begin{bmatrix}
		1&-2n_1&1-2m_2&0&1-2m_3&0 \\ -2n_1&1&0&1-2m_2&0&1-2m_3
	\end{bmatrix}.
	\]	
	It is known that $\del_1$ and $\del_3^T$ have the same Smith normal form, so it suffices to check $S(\del_1)$. The matrix $\del_1$ has rank $2$, and the non-zero determinants of its $2 \times 2$ minors are given by
	\[
	\begin{gathered}
		1-4n_1^2,\,
		(1-2m_2)^2,\,
		(1-2m_3)^2,\,
		1-2m_2,\, \\
		1-2m_3,\,
		2n_1(1-2m_2),\,
		2n_1(1-2m_3),\,
		(1-2m_2)(1-2m_3).
	\end{gathered}
	\]
	Let $g$ be the greatest common divisor of these determinants. Then, after some relatively harmless algebra, we find that $g = \gcd(1-4n_1^2, 1-2m_2, m_2-m_3)$.
	
	For $\del_2$, which has rank $4$, we compute the determinants of all $4 \times 4$ minors:
	\[
	\begin{gathered}
		(1-4n_1^2)^2, \,
		(1-2n_2)^4, \,
		(1-2m_3)^4, \, \\
		(1-4n_1^2)(1-2n_2)^2, \,
		(1-4n_1^2)(1-2m_3)^2, \,
		(1-2n_2)^2(1-2m_3)^2,
	\end{gathered}
	\]
	and find that the greatest common divisor of these is equal to $g^2$. Hence 
	\[
	S(\del_1) = S(\del_3)^T = \begin{bmatrix}
		1&0&0&0&0&0 \\ 0&g&0&0&0&0
	\end{bmatrix}, \quad \text{and}\quad
	S(\del_2) = \begin{bmatrix}
		I_2 & 0_2 & 0_2 \\
		0_2 & g I_2 & 0_2 \\
		0_2 & 0_2 & 0_2
	\end{bmatrix},
	\]
	where $I_2, 0_2$ are the $2 \times 2$ identity matrix and zero matrix, respectively. Cases (b) and (c) are shown in a similar manner.
\end{proof}

\begin{prop}\label{prop:DT_3}
	Let $k=3$, let $\Lambda$ be one of the three possible rank-$3$ graphs constructed as above, and let $g$ be the corresponding value as defined in Lemma \ref{lem:DT_3}. Then
	\begin{enumerate}[label=(\roman*)]
		\item If $g=1$, then $K_0(\mathcal{A}(\Lambda)) \cong K_1(\mathcal{A}(\Lambda)) \cong 0$,
		\item If $g \geq 2$, then $K_1(\mathcal{A}(\Lambda)) \cong (\mathbb{Z}/g) \oplus (\mathbb{Z}/g)$, and $K_0(\mathcal{A}(\Lambda))$ is isomorphic to a group of order $g^2$.
	\end{enumerate}
\end{prop}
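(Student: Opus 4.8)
The plan is to compute the integral homology of the complex $\mathcal{D}_3$ of Proposition~\ref{prop:3-rank_ss} from the Smith normal forms supplied by Lemma~\ref{lem:DT_3}, and then to feed the result into Corollary~\ref{cor:3-rank}. Since $\Lambda$ has two vertices, $\mathbb{Z}\Lambda^{\mathbf 0}\cong\mathbb{Z}^2$, so $\mathcal{D}_3$ reads $0\to\mathbb{Z}^2\overset{\del_3}{\longrightarrow}\mathbb{Z}^6\overset{\del_2}{\longrightarrow}\mathbb{Z}^6\overset{\del_1}{\longrightarrow}\mathbb{Z}^2\to0$, where by Lemma~\ref{lem:DT_3} the maps $\del_1,\del_2,\del_3$ have ranks $2,4,2$. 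From $S(\del_1)$ one reads off $H_0(\mathcal{D}_3)=\coker(\del_1)\cong\mathbb{Z}/g$, and since $\del_3$ has full rank on its rank-$2$ domain, $H_3(\mathcal{D}_3)=\ker(\del_3)=0$.

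For the two middle homology groups I would use that $\im(\del_1)$ and $\im(\del_2)$ are free abelian, so that $\ker(\del_1)$ and $\ker(\del_2)$ are direct summands of $\mathbb{Z}^6$. This yields $\coker(\del_2)\cong H_1(\mathcal{D}_3)\oplus\im(\del_1)$ and $\coker(\del_3)\cong H_2(\mathcal{D}_3)\oplus\im(\del_2)$, where $H_1(\mathcal{D}_3)$ and $H_2(\mathcal{D}_3)$ are finite because the relevant kernels and images have equal rank. Now $S(\del_2)$ gives $\coker(\del_2)\cong(\mathbb{Z}/g)^2\oplus\mathbb{Z}^2$ and $S(\del_3)=S(\del_1)^T$ gives $\coker(\del_3)\cong\mathbb{Z}/g\oplus\mathbb{Z}^4$; passing to torsion subgroups forces $H_1(\mathcal{D}_3)\cong(\mathbb{Z}/g)^2$ and $H_2(\mathcal{D}_3)\cong\mathbb{Z}/g$.

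If $g=1$, every $H_p(\mathcal{D}_3)$ vanishes, so every $E^2$-term of the Evans spectral sequence of Theorem~\ref{thm:gwion} vanishes and $K_0(\mathcal{A}(\Lambda))=K_1(\mathcal{A}(\Lambda))=0$; equivalently $\del_1$ is surjective and the claim follows from Corollary~\ref{cor:3-rank}(i). If $g\geq2$, I would first check that the hypothesis $\bigcap_i\ker(I-M_i^T)=0$ of Corollary~\ref{cor:3-rank}(ii) holds --- indeed each summand $\ker(I-M_i^T)$ is already trivial, since $M_i$ is symmetric and equals $D_i$ or $T_i$, and $\det(I-D_i)=(1-2m_i)^2\neq0$, $\det(I-T_i)=1-4n_i^2\neq0$ as $m_i,n_i\geq2$. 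Corollary~\ref{cor:3-rank}(ii) then gives $K_1(\mathcal{A}(\Lambda))\cong\ker(\del_1)/\im(\del_2)=H_1(\mathcal{D}_3)\cong(\mathbb{Z}/g)^2$ together with a short exact sequence $0\to\coker(\del_1)\to K_0(\mathcal{A}(\Lambda))\to\ker(\del_2)/\im(\del_3)\to0$, i.e.\ $0\to\mathbb{Z}/g\to K_0(\mathcal{A}(\Lambda))\to\mathbb{Z}/g\to0$, whence $|K_0(\mathcal{A}(\Lambda))|=g^2$.

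No serious obstacle is expected once Lemma~\ref{lem:DT_3} is available; the only points requiring care are the passage from the Smith normal forms of $\del_2,\del_3$ to $H_1$ and $H_2$ via the direct-summand argument, and the fact that Proposition~\ref{prop:3-rank_ss} pins $K_0$ down only through the displayed extension --- which is precisely why the statement asserts only that $K_0(\mathcal{A}(\Lambda))$ has order $g^2$, rather than naming its isomorphism type.
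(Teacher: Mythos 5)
Your proposal is correct and follows essentially the same route as the paper: read off $\coker(\del_1)\cong\mathbb{Z}/g$ and $\ker(\del_3)=0$ from the Smith normal forms, compute the middle homologies $H_1(\mathcal{D}_3)\cong(\mathbb{Z}/g)^2$ and $H_2(\mathcal{D}_3)\cong\mathbb{Z}/g$ (the paper does this via the standard rank/invariant-factor formula, you via the equivalent direct-summand argument on cokernels), and then invoke Evans' sequences. The one substantive difference is that you verify $\bigcap_i\ker(I-M_i^T)=0$ (each $I-M_i^T$ has nonzero determinant) so as to apply Corollary \ref{cor:3-rank}(ii) and obtain the extension $0\to\mathbb{Z}/g\to K_0\to\mathbb{Z}/g\to0$ with no residual subgroup $G_0$; the paper instead cites Proposition \ref{prop:3-rank_ss} directly and tacitly drops $G_0$ (its displayed sequence even contains a stray $\mathbb{Z}/35$ from an earlier draft). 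Your extra check is exactly what is needed to justify the claim that $K_0$ has order precisely $g^2$ rather than a divisor of it, so on this point your argument is the more complete one.
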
	

\begin{proof}
	If $g=1$, then $\coker(\del_1) \cong \ker(\del_3) \cong 0$, and we may apply Corollary \ref{cor:3-rank}(i). Using the Smith normal form to calculate the homologies, we find that $K_0(\mathcal{A}(\Lambda)) \cong K_1(\mathcal{A}(\Lambda)) \cong 0$.
	
	If $g \geq 2$, then $\coker(\del_1) \cong (\mathbb{Z} / g)$, and $\ker(\del_3) \cong 0$. Once again, we apply Proposition \ref{prop:3-rank_ss} to obtain the short exact sequence
	\[
	0 \longrightarrow (\mathbb{Z}/g) \longrightarrow K_0(\mathcal{A}(\Lambda)) \longrightarrow \ker(\del_2)/\im(\del_1) \longrightarrow 0,
	\]
	and the isomorphism $K_1(\mathcal{A}(\Lambda)) \cong \ker(\del_1) / \im(\del_2)$. It is well known (consult, for example, \cite{DHSW2003}) that, when the differentials are finitely-generated, the homologies of a chain complex can be computed via the formula
	\[
	\ker(\del_i)/\im(\del_{i+1}) \cong \mathbb{Z}^{c-r-s} \oplus \bigoplus_{j=1}^r (\mathbb{Z} / a_j) ,
	\]
	where $c$ is the number of columns of $\del_i$, $r := \rk(\del_{i+1})$, $s := \rk(\del_i)$, and $a_j$ are the non-zero entries of $S(\del_{i+1})$. In our example, this gives
	\[
	\ker(\del_1) / \im(\del_2) \cong (\mathbb{Z} / g) \oplus (\mathbb{Z} / g) , \qquad \text{and} \qquad \ker(\del_2) / \im(\del_3) \cong (\mathbb{Z} / g),
	\]
	and the result follows.
\end{proof}

\begin{prop}\label{prop:DT}
	Let $\Lambda$ be one of the $k$ possible rank-$k$ graphs constructed as above, with incidence matrices $M_1, \ldots , M_k$. Write 
	\[
	a_i := \begin{cases}
		1 - 2m_i & \text{ whenever } M_i = D_i,\\
		1 - 4n_i^2 & \text{ whenever } M_i = T_i,
	\end{cases}
	\]
	and $g := \gcd(a_1,\ldots , a_k)$. Recall the matrices $\del_1 , \ldots , \del_k$ from Theorem \ref{thm:gwion}, and write $I_s$ to denote the $s \times s$ identity matrix. Then the Smith normal forms of the matrices $\del_i$ are given by
	\[
	S(\del_1) = S(\del_k)^T = \begin{bmatrix}
		1&0&\mathbf{0} \\ 0&g&\mathbf{0}
	\end{bmatrix}_{2 \times 2k},
	\]
	and for $2 \leq i \leq k-1$, the rank of $\del_i$ is equal to $2R := 2{k-1 \choose i-1}$, and
	\[
	S(\del_i) = \begin{bmatrix}
		I_R & 0 & \mathbf{0} \\
		0 & gI_R & \mathbf{0} \\
		\mathbf{0} & \mathbf{0} & \mathbf{0}
	\end{bmatrix}_{2 {k \choose i-1} \times 2 {k \choose i}} .
	\]
\end{prop}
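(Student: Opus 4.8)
The plan is to extend the Cauchy--Binet computation of Lemma \ref{lem:DT_3} to all $k$. Since the Smith normal form of $\partial_i$ is determined by its determinantal divisors $\Delta_j(\partial_i)$ --- the gcd of all $j\times j$ minors, with $d_1\cdots d_j=\Delta_j$ --- it suffices to compute $\rk(\partial_i)$ and to show $\Delta_j(\partial_i)=g^{\max\{0,\,j-R\}}$. The organising observation is that the $2\times 2$ blocks $I-M_i^T$ are symmetric and pairwise commuting, hence simultaneously diagonalised by the common eigenvectors $(1,1)$ and $(1,-1)$: in that basis $I-M_i^T$ becomes $\operatorname{diag}(\beta_i,\gamma_i)$, where $\beta_i=\gamma_i=1-2m_i$ when $M_i=D_i$ and $\beta_i=1-2n_i$, $\gamma_i=1+2n_i$ when $M_i=T_i$, so that $a_i=\beta_i\gamma_i$ in the first case and $a_i=\beta_i=\gamma_i$ in the second. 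Note also that $\mathcal{D}_k$ is precisely the Koszul complex of the commuting endomorphisms $I-M_i^T$ of $\mathbb{Z}\Lambda^\mathbf{0}$.

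For the rank I would work over $\mathbb{Q}$: the change of basis $P=\bigl(\begin{smallmatrix}1&1\\1&-1\end{smallmatrix}\bigr)$, applied to each copy of $\mathbb{Q}\Lambda^\mathbf{0}$, identifies $\mathcal{D}_k\otimes\mathbb{Q}$ with the direct sum $K_\bullet(\beta_1,\dots,\beta_k;\mathbb{Q})\oplus K_\bullet(\gamma_1,\dots,\gamma_k;\mathbb{Q})$ of two Koszul complexes on scalar sequences. No $\beta_i$ or $\gamma_i$ vanishes, so each summand is exact, and feeding the dimensions $2\binom{k}{\bullet}$ of $\mathcal{D}_k$ through the identity $\sum_{j\ge i}(-1)^{j-i}\binom{k}{j}=\binom{k-1}{i-1}$ gives $\rk(\partial_i)=2\binom{k-1}{i-1}=2R$; this also confirms the stated sizes of $S(\partial_i)$.

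For the Smith normal form I would argue prime-by-prime. At $p=2$, each $\det(I-M_i^T)\in\{a_i,a_i^2\}$ is odd, so every block $I-M_i^T$ is invertible over $\mathbb{Z}_{(2)}$, hence the Koszul complex $\mathcal{D}_k\otimes\mathbb{Z}_{(2)}$ is contractible (split exact); thus every invariant factor of every $\partial_i$ is odd. At an odd prime $p$, $\det P=-2$ is a unit, so $P$ is unimodular over $\mathbb{Z}_{(p)}$ and $\partial_i\otimes\mathbb{Z}_{(p)}\cong\partial_i^\beta\oplus\partial_i^\gamma$, the $i$-th Koszul differentials on the two scalar sequences. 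The key sublemma is that, for a scalar sequence $c_1,\dots,c_k$ and any $j\le R$, every $j\times j$ minor of the $i$-th Koszul differential is a signed sum of products of $j$ (not necessarily distinct) of the $c_m$ --- hence divisible by $\gcd(c_\bullet)^j$ --- while the $R\times R$ submatrix whose rows and columns are indexed by the $i$-subsets $S$ containing a fixed $m_0$ (a column being $e_S$, a row $e_{S\setminus\{m_0\}}$) is diagonal with entries $\pm c_{m_0}$, realising a minor of every order $j\le R$ equal to $\pm c_{m_0}^{\,j}$. Since $\gcd_m(c_m^{\,j})=\gcd(c_\bullet)^j$, this forces $\Delta_j(\partial_i^c)=\gcd(c_\bullet)^j$ for $j\le R$ and, using $\rk(\partial_i^c)=R$, that $\partial_i^c$ has exactly $R$ nonzero invariant factors, each equal to $\gcd(c_\bullet)$.

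Assembling: by additivity of cokernels, the $p$-primary elementary divisors of $\partial_i$ are those of $\partial_i^\beta$ together with those of $\partial_i^\gamma$, namely $R$ copies of $\gcd\beta_\bullet$ and $R$ copies of $\gcd\gamma_\bullet$. Since at least one $M_t=T_t$, the coprime pair $1-2n_t,\,1+2n_t$ forces $\gcd(\gcd\beta_\bullet,\gcd\gamma_\bullet)=1$, so these $2R$ values sort into $R$ units followed by $R$ copies of $\gcd\beta_\bullet\cdot\gcd\gamma_\bullet$; identifying this product with $g=\gcd(a_\bullet)$, recombining the localisations, and using that every invariant factor is odd, gives the claimed block Smith forms for $\partial_1,\dots,\partial_{k-1}$, while $S(\partial_k)=S(\partial_1)^T$ then follows from the order-reversing symmetry of $\mathcal{D}_k$. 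I expect the main obstacle to be exactly this last identification of $\gcd\beta_\bullet\cdot\gcd\gamma_\bullet$ with $g$: it requires a careful accounting of how the primes distribute among the $\beta_i$ and $\gamma_i$, and the bookkeeping is noticeably more delicate once several $M_i$ equal $T_i$ rather than $D_i$ --- a configuration the $k=3$ lemma only touches in its simplest case.
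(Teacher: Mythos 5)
Your route is genuinely different from the paper's. The paper computes $S(\del_1)$ by listing the $2\times 2$ minors directly (Cauchy--Binet, as in Lemma \ref{lem:DT_3}) and gets the rank of the middle $\del_i$ by exhibiting an explicit $2\binom{k-1}{i-1}$-dimensional non-vanishing minor with blocks $I_2-M_1^T$ on the diagonal, plus an only-outlined argument that no larger minor survives. Your observation that every $I-M_i^T$ is diagonalised by $(1,1)$ and $(1,-1)$, so that $\mathcal{D}_k\otimes\mathbb{Z}[1/2]$ splits as two scalar Koszul complexes, is cleaner: the rank falls out of exactness over $\mathbb{Q}$, and your monomial $R\times R$ submatrix indexed by the $i$-subsets containing a fixed $m_0$ pins down the determinantal divisors of each scalar summand exactly. (Minor slip: your two cases for $a_i$ are swapped --- $a_i=\beta_i=\gamma_i$ when $M_i=D_i$ and $a_i=\beta_i\gamma_i$ when $M_i=T_i$.)

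The step you flag as ``the main obstacle,'' however --- identifying $\gcd(\beta_1,\dots,\beta_k)\cdot\gcd(\gamma_1,\dots,\gamma_k)$ with $g=\gcd(a_1,\dots,a_k)$ --- is a genuine gap, and it cannot be closed: the identity fails once two of the $M_i$ are $T_i$'s with different $n_i$. Given at least one $T_i$ one does get $\gcd(\beta_\bullet)\gcd(\gamma_\bullet)\mid g$, but an odd prime can divide every $a_i$ while splitting between the two eigenvalue sequences. Take $k=3$, $M_1=T_1$, $M_2=T_2$, $M_3=D_3$ with $n_1=2$, $n_2=4$, $m_3=8$: then $(a_1,a_2,a_3)=(-15,-63,-15)$, so $g=3$, while $(\beta_1,\beta_2,\beta_3)=(-3,-7,-15)$ and $(\gamma_1,\gamma_2,\gamma_3)=(5,9,-15)$ give $\gcd(\beta_\bullet)=\gcd(\gamma_\bullet)=1$. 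Your localised computation then yields $S(\del_1)=\bigl[\,\mathrm{diag}(1,1)\mid\mathbf{0}\,\bigr]$, and a direct check confirms it: the $2\times2$ minor of $\del_1$ on the first columns of $I-T_1^T$ and $I-T_2^T$ equals $2(n_1-n_2)=-4$, which is coprime to the minor $a_1=-15$, so $\Delta_2(\del_1)=1\neq g$. In other words, your machinery computes the Smith forms correctly, but what it computes is $\gcd(\beta_\bullet)\cdot\gcd(\gamma_\bullet)$, not $g$; the two agree when at most one $M_i$ is a $T_i$ (coprimality of $1-2n_t$ and $1+2n_t$ then forces each prime of $g$ into a single eigenvalue sequence), or when all the $T_i$ carry the same $n$, but not in general. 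The paper's proof does not detect this because Lemma \ref{lem:DT_3} only works case (a) explicitly and declares (b), (c) ``similar.'' The honest conclusion of your argument is the proposition with $g$ replaced by $\gcd(\beta_\bullet)\cdot\gcd(\gamma_\bullet)$, or under an added hypothesis forcing the two to coincide.
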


\begin{proof}
	The main argument of the proof is that, knowing the non-trivial invariant factor of $S(\del_1) = S(\del_k)^T$, along with the rank of each $\del_i$, suffices to completely determine the Smith normal form of these matrices.
	
	Firstly, we compute $S(\del_1) = S(\del_k)^T$ in the same manner as Lemma \ref{lem:DT_3}. Secondly, we determine the shape (and hence the rank) of the matrices $\del_2, \ldots , \del_{k-1}$. Recall from Theorem \ref{thm:gwion} the sets
	\[
	N_l := \begin{cases}
		\big\lbrace \bmu := (\mu_1 , \ldots , \mu_l) \in \lbrace 1, \ldots , k\rbrace^l \mid \mu_1 < \cdots < \mu_l \big\rbrace &\text{if } l \in \lbrace 1,\ldots , k\rbrace, \\
		\lbrace * \rbrace & \text{if } l=0, \\
		\emptyset & \text{otherwise}.
	\end{cases}
	\]
	From now on, we treat $\del_i$ exclusively as block matrices, with shape $|N_{i-1}| \times |N_i|$, and which comprise $2 \times 2$ blocks which are either zero or $\pm (I_2-M_j^T)$ for $j \in \lbrace 1,\ldots , k\rbrace$. Notice that $N_i$ has $(\begin{smallmatrix} k \\ i \end{smallmatrix})$ elements, each a strictly-increasing tuple of length $i$ whose entries belong to $\lbrace 1 , \ldots , k \rbrace$. Thus we consider $N_i$ and $N_{i-1}$ as ordered sets, with $\bmu > \bmu'$ if there is some $r$ such that $\mu_r > \mu_r'$ and $\mu_t = \mu_t'$ for $t < r$. We label the (blocks of the) rows and columns of each matrix $\del_i$ with the elements, in order, of $N_{i-1}$ and $N_i$ respectively, writing $\bmu(p) \in N_{i-1}$ for the label of row $p$, and $\bnu(q) \in N_i$ for the label of column $q$.
	
	Denote by $\del_i(p,q)$ a block of $\del_i$ in position $(p,q)$. Then $\del_i(p,q)$ is either $\pm (I_2-M_j^T)$ for some $j$, or the $2 \times 2$ zero matrix. It is non-zero if and only if $\bmu(p)$ can be obtained by deleting one element, say $\nu(q)_t$, of $\bnu(q)$. Furthermore, the value of $\nu(q)_t$ determines the index of the matrix $M_j$, and the position $t$ determines the sign of $\del_i(p,q)$ as follows:
	\[
	\del_i(p,q) = \begin{cases}
		I_2-M_j^T & \text{if $t$ is odd,} \\
		M_j^T - I_2 & \text{if $t$ is even.}
	\end{cases}
	\]
	Using this interpretation of the blocks of $\del_i$, we conclude that the number of non-zero blocks is equal to $|N_{i-1}| \cdot (k-(i-1)) = |N_i| \cdot (k-i)$.
	
	It remains to deduce the rank of each matrix $\del_i$. Firstly, observe that
	\[\rk(\del_i) \leq \min 2 \bigg\lbrace {k \choose i-1} , {k \choose i} \bigg\rbrace.
	\]
	By construction of $\del_i$, there exists a $2(\begin{smallmatrix} k-1 \\ i-1 \end{smallmatrix}) \times
	2(\begin{smallmatrix} k-1 \\ i-1 \end{smallmatrix})$ minor of $\del_i$, with block-elements $I_2 - M_1^T$ on the diagonal, obtained by considering those column-blocks labelled by elements $\bnu \in N_i$ with $\nu_1 = 1$, and row-blocks labelled by elements $\bmu \in N_{i-1}$ with $\mu_1 \neq 1$. There are
	\[
	{k \choose i} - {k-1 \choose i} = {k-1 \choose i-1}, \quad \text{and} \quad
	{k \choose i-1} - {k-1 \choose i-2} = {k-1 \choose i-1}
	\]
	such column- and row-blocks, respectively, and so
	\[
	\rk(\del_i) \geq 2 {k-1 \choose i-1}.
	\]
	We claim that this is in fact an equality, and we display here an outline of the proof. Suppose that there exists a non-vanishing minor $A$ which is larger than the one above. Given the number of non-zero blocks, and the fact that minors are square matrices, the diagonal (or when relevant, the anti-diagonal) of $A$ must not contain any zero blocks, else $A$ vanishes. Using elementary row operations, we take the last $(\begin{smallmatrix} k-1 \\ i-1 \end{smallmatrix})$ row-blocks of $\del_i$ and move them to the top, such that the elements $\bmu \in N_{i-1}$ with $\mu_1 = 1$ now label the rows at the bottom of $\del_i$. Hence we must be able to find a column labelled by an element $\bnu \in N_i$ such that $\nu_1 \neq 1$, and from which we can find the $\bmu$, but this is a contradiction.
	
	Indeed, by some careful considerations, it turns out that regardless of the row operations performed, we will arrive at such a contradiction. Thus $\rk(\del_i) = 2{k-1 \choose i-1}$, and from this the result is readily verified.
\end{proof}

%\begin{lem}\label{lem:DT_4}
%	Let $k=4$, and let $\Lambda$ be one of the four possible $4$-rank graphs constructed as above, with incidence matrices $M_1,\ldots , M_4$. Recall the matrices $\del_1,\del_2,\del_3,\del_4$ from Proposition \ref{prop:4-rank_ss}, and write $I_3$ to denote the $3 \times 3$ identity matrix. Define
%	\[
%	a_i := \begin{cases}
	%	1 - 2m_i & \text{ whenever } M_i = D_i,\\
	%	1 - 4n_i^2 & \text{ whenever } M_i = T_i.
	%	\end{cases}
%	\]
%	Then the Smith normal forms of the matrices $\del_i$ are given by
%	\[
%	S(\del_1) = S(\del_4)^T = \begin{bmatrix}
	%	1&0&\mathbf{0} \\ 0&g&\mathbf{0}
	%	\end{bmatrix}_{2 \times 8} \, \text{and}\quad
%	S(\del_2) = S(\del_3)^T = \begin{bmatrix}
	%	I_3 & \mathbf{0} & \mathbf{0} \\
	%	\mathbf{0} & g I_3 & \mathbf{0} \\
	%	\mathbf{0} & \mathbf{0} & \mathbf{0} \\
	%	\end{bmatrix}_{8 \times 12}
%	\] 
%	where $g := \gcd(a_1,a_2,a_3,a_4)$.
%\end{lem} 
%
%\begin{proof}
%	The proof is analogous to that of Lemma \ref{lem:DT_3}.
%\end{proof}

\begin{prop}[$k=4$]\label{prop:DT_4}
	Let $\Lambda$ be one of the four possible rank-$4$ graphs constructed as above, and let $g$ be the corresponding value as defined in Lemma \ref{prop:DT}. Then:
	\begin{enumerate}[label=(\roman*)]
		\item If $g=1$, then $K_0(\mathcal{A}(\Lambda)) \cong K_1(\mathcal{A}(\Lambda)) \cong 0$,
		\item If $g \geq 2$, then we can find short exact sequences
		\begin{enumerate}[label=(\alph*)]
			\item $0 \longrightarrow (\mathbb{Z}/g)/G_0 \longrightarrow K_0(\mathcal{A}(\Lambda)) \longrightarrow (\mathbb{Z}/g)^3 \longrightarrow 0$,
			\item $0 \longrightarrow (\mathbb{Z}/g)^3/G_2 \longrightarrow K_1(\mathcal{A}(\Lambda)) \longrightarrow G_3 \subseteq (\mathbb{Z}/g) \longrightarrow 0$,
		\end{enumerate}
		where $G_0$ and $G_2$ are as in Proposition \ref{prop:4-rank_ss}.
	\end{enumerate}
\end{prop}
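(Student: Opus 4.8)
The plan is to specialise Proposition \ref{prop:DT} to $k=4$ and feed the resulting Smith--normal--form data into the $k=4$ K-theory machinery of Proposition \ref{prop:4-rank_ss} and Corollary \ref{cor:4-rank}. Since $\Lambda$ has two vertices, $\mathbb{Z}\Lambda^\mathbf{0} \cong \mathbb{Z}^2$ and the chain complex $\mathcal{D}_4$ reads
\[
0 \longrightarrow \mathbb{Z}^2 \overset{\del_4}{\longrightarrow} \mathbb{Z}^8 \overset{\del_3}{\longrightarrow} \mathbb{Z}^{12} \overset{\del_2}{\longrightarrow} \mathbb{Z}^8 \overset{\del_1}{\longrightarrow} \mathbb{Z}^2 \longrightarrow 0 .
\]
Proposition \ref{prop:DT} tells us that $\del_1$ and $\del_4^T$ have rank $2$ with invariant factors $1, g$, while $\del_2$ and $\del_3$ have rank $2\binom{3}{1} = 6$ with invariant factors $1,1,1,g,g,g$. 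First I would use this together with the standard formula
\[
\ker(\del_i)/\im(\del_{i+1}) \cong \mathbb{Z}^{c-r-s} \oplus \bigoplus_{j=1}^r (\mathbb{Z}/a_j)
\]
employed in the proof of Proposition \ref{prop:DT_3} (where $c$ is the number of columns of $\del_i$, $s := \rk(\del_i)$, $r := \rk(\del_{i+1})$, and the $a_j$ are the nonzero invariant factors of $\del_{i+1}$) to read off the homology of $\mathcal{D}_4$:
\[
H_0(\mathcal{D}_4) \cong \mathbb{Z}/g, \quad H_1(\mathcal{D}_4) \cong (\mathbb{Z}/g)^3, \quad H_2(\mathcal{D}_4) \cong (\mathbb{Z}/g)^3, \quad H_3(\mathcal{D}_4) \cong \mathbb{Z}/g, \quad H_4(\mathcal{D}_4) = 0 ,
\]
all of which vanish when $g=1$.

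For part (i), when $g = 1$ we have $\coker(\del_1) = H_0(\mathcal{D}_4) = 0$, so $\del_1$ is surjective and Corollary \ref{cor:4-rank}(i) applies. It gives $K_0(\mathcal{A}(\Lambda)) \cong H_2(\mathcal{D}_4) \oplus G_1$ with $G_1 \subseteq \ker(\del_4) = H_4(\mathcal{D}_4) = 0$, and a short exact sequence $0 \to (H_1(\mathcal{D}_4))/G_2 \to K_1(\mathcal{A}(\Lambda)) \to H_3(\mathcal{D}_4) \to 0$; since all of $H_1, H_2, H_3, H_4$ vanish, both K-groups are zero.

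For part (ii), when $g \geq 2$, I would first verify the hypothesis $\bigcap_i \ker(I - M_i^T) = 0$ of Corollary \ref{cor:4-rank}(ii). For $M_i = D_i$ one has $I - M_i^T = (1 - 2m_i) I_2$, and for $M_i = T_i$ one has $\det(I - M_i^T) = 1 - 4n_i^2$; both scalars equal $a_i$ and are nonzero because $m_i, n_i \geq 2$, so each $I - M_i^T$ is injective and the intersection vanishes. Hence $G_1 = 0$ and $K_0(\mathcal{A}(\Lambda)) \cong F_2$, and substituting $\coker(\del_1) = H_0(\mathcal{D}_4) \cong \mathbb{Z}/g$ and $\ker(\del_2)/\im(\del_3) = H_2(\mathcal{D}_4) \cong (\mathbb{Z}/g)^3$ into the sequences of Proposition \ref{prop:4-rank_ss}(ii)--(iii) yields sequence (a). Likewise, substituting $\ker(\del_1)/\im(\del_2) = H_1(\mathcal{D}_4) \cong (\mathbb{Z}/g)^3$ and $G_3 \subseteq H_3(\mathcal{D}_4) \cong \mathbb{Z}/g$ into Proposition \ref{prop:4-rank_ss}(iv) yields sequence (b).

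The argument is essentially bookkeeping once Proposition \ref{prop:DT} is in hand; the only point requiring care is correctly matching each homology group of $\mathcal{D}_4$ to the corresponding term ($E^3_{p,q}$, $E^5_{p,q}$, and the filtration factor $F_2$) appearing in Evans' spectral sequence, exactly as was tracked in the proof of Proposition \ref{prop:4-rank_ss}. The subgroups $G_0$ and $G_3$ cannot be pinned down from the Smith--normal--form data alone --- they are an image and a kernel of higher differentials in the spectral sequence --- so, as in the $k=3$ examples of Section \ref{S:examples}, the sequences (a) and (b) cannot in general be collapsed further. One could note that in fact $G_2 = 0$ here (since $\ker(\del_4) = 0$, by the reasoning in Corollary \ref{cor:4-rank}(ii)), but we retain $G_2$ in the statement for uniformity with Proposition \ref{prop:4-rank_ss}.
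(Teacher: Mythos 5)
Your proposal is correct and follows essentially the same route as the paper: read off the ranks and invariant factors from Proposition \ref{prop:DT}, compute the homology of $\mathcal{D}_4$ via the Smith normal forms, and substitute into Proposition \ref{prop:4-rank_ss} and Corollary \ref{cor:4-rank}. Your bookkeeping (the groups $H_0\cong\mathbb{Z}/g$, $H_1\cong H_2\cong(\mathbb{Z}/g)^3$, $H_3\cong\mathbb{Z}/g$, $H_4=0$) matches the paper's, and your explicit check that each $I-M_i^T$ is injective is a slightly more careful justification of $\ker(\del_4)=0$ than the paper gives.
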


\begin{proof}
	Firstly, if $g=1$, it follows that $\ker(\del_4) = 0$ and we may apply Corollary \ref{cor:4-rank}(i). But $H_3(\mathcal{D}_4)$, $H_2(\mathcal{D}_4)$, and $H_1(\mathcal{D}_4)$ are all trivial, and so the K-theory of $\mathcal{A}(\Lambda)$ is trivial.
	
	If $g \geq 2$, then $\ker(\del_4) = 0$, $\coker(\del_1) \cong (\mathbb{Z} / g)$, and Proposition \ref{prop:4-rank_ss} gives us:
	\begin{enumerate}[label=(\alph*)]
		\item $0 \longrightarrow \coker(\del_1) / G_0 \longrightarrow K_0(\mathcal{A}(\Lambda)) \longrightarrow \ker(\del_2) / \im(\del_3) \longrightarrow 0$,
		\item $0 \longrightarrow \dfrac{\ker(\del_1) / \im(\del_2)}{G_2} \longrightarrow K_1(\mathcal{A}(\Lambda)) \longrightarrow G_3 \longrightarrow 0$.
	\end{enumerate}
	Again, we use the Smith normal forms to compute the homologies $\ker(\del_i) / \im(\del_{i+1})$ of the chain complex $\mathcal{D}_4$, which reveal that
	\[
	\ker(\del_1)/\im(\del_2) \cong \ker(\del_2) / \im(\del_3) \cong (\mathbb{Z} / g)^3, \qquad  \text{and} \qquad \ker(\del_3) / \im(\del_4) \cong (\mathbb{Z} / g),
	\]
	and the above sequences reduce to those we desire.
\end{proof}

\nocite{KonVdo2015}
\nocite{CunKri1980}

\section*{Acknowledgments} 
The authors would like to thank David Evans and Gwion Evans for discussing the connections between the theory of buildings, higher-rank graphs, and the K-theory of their corresponding $\cst$-algebras at the Newton Institute, Cambridge in Spring 2017.

The authors express their gratitude to Newcastle University for providing an excellent research environment, and to the EPSRC for funding part of this research.

\bibliographystyle{amsplain}
\bibliography{k=3bib}

%\addresses

\end{document}